\newtheorem{theorem}{Theorem}
\newtheorem{lemma}{Lemma}[section]
\newtheorem{remark}{Remark}[section]
\newtheorem{example}{Example}
\numberwithin{equation}{section}
\newcommand{\keywords}[1]{\small\textbf{\textit{Keywords---}}#1}
\title{Numerical analysis of scattered point measurement-based regularization for backward problems for fractional wave equations}
\author[1]{Dakang Cen}
\author[2]{Zhiyuan Li\footnote{Corresponding author 1: lizhiyuan@nbu.edu.cn, supported by the National Natural Science Foundation of China (no. 12271277), Ningbo Youth Leading Talent Project (no. 2024QL045).  and the Open Research Fund of the Key Laboratory of Nonlinear Analysis \& Applications (Central China Normal University), Ministry of Education, China (no. NAA20230RG002).}}
\author[3]{Wenlong Zhang\footnote{Corresponding author 2: zhangwl@sustech.edu.cn, supported by the National Natural Science Foundation of China under grant
numbers No.12371423 and No.12241104.}}
\affil[1,3]{Department of Mathematics, Southern University of Science and Technology, Shenzhen, 518055, China}
\affil[2]{School of Mathematics and Statistics, Ningbo University, Ningbo, 315211, China}
\begin{document}
\maketitle

\abstract{In this work, our aim is to reconstruct the unknown initial value from terminal data.
We develop a numerical framework on nonuniform time grids for fractional wave equations under the lower regularity assumptions. Then, we introduce a regularization method that effectively handles scattered point measurements contaminated with stochastic noise. The optimal error estimates of stochastic convergence not only balance discretization errors, the noise, and the number of observation points, but also propose an a priori choice of regularization parameters. Finally, several numerical experiments are presented to demonstrate the efficiency and accuracy of the algorithm.}

\keywords{backward fractional wave, fully discretization, scattered point measurement, regularization method, stochastic error estimates}

\textbf{MSC2020:} 35R11, 35R09, 35B40

\section{Introduction}
Assuming that $\alpha\in(1,2)$ and that $\Omega \subset \mathbb{R}^d$, $d=1,2$, is a bounded domain with sufficiently smooth boundary $\partial \Omega$, we consider the following fractional wave equation:
\begin{equation}\label{eq-gov}
\begin{cases}
  \partial_t^\alpha u - \Delta u = f(x,t), & (x,t) \in \Omega \times (0,T), \\
  u(x,0) = a_0(x), & x \in \Omega, \\
  \frac{\partial}{\partial t}u(x,0) = a_1(x), & x \in \Omega, \\
  u(x,t) = 0, & (x,t) \in \partial \Omega \times (0,T),
\end{cases}
\end{equation}
where the operator $\partial_t^\alpha$ is referred to as the Caputo derivative of order $\alpha$, defined by
$$
\partial_t^\alpha \psi(t) := \frac{1}{\Gamma(2 - \alpha)} \int_0^t (t-\tau)^{1-\alpha} \psi''(\tau) d\tau, \quad t>0.
$$
Equation (\ref{eq-gov}) is one of the most famous fractional differential equations. Here, we pay attention to the reconstruction for initial function $a_1(x)$ from observation $u(T)$. This is a continuation of our previous paper \cite{LiZhang2024}.
Specifically, numerical framework for forward problems and stochastic convergence for scattered point measurement-based
regularization are investigated. Without losing generality, let $a_0(x)=0$, $f(x,t)=0$. For nonhomogeneous or $a_0(x) \neq 0$ cases, it can be obtained by simple change of variables.  

Because there are many monographs and papers on this problem, we provide a brief review. There are two types of numerical framework for forward problems: uniform \cite{Jin2016,Jin2016wave,Luo2019} and nonuniform \cite{Stynes2017,LiaoH2018L1,Kopteva2019} time grids. The most representative papers for sub-diffusion equations are: ‘An analysis of the L1 scheme for the subdiffusion equation with non-smooth data’ \cite{Jin2016} and ‘Sharp error estimate of a nonuniform L1 formula for time-fractional reaction subdiffusion equations’ \cite{LiaoH2018L1}. The main ideas of them are based on the discrete Laplace transform and the discrete complementary convolution kernels technology, respectively. As one of the most famous classical numerical methods, L1 schemes for diffusion-wave equations are presented, see Table \ref{REF-method}. $h$ represents the size of the space grids. $N$ is the number of partitions in time grids $0=t_0<\cdots<t_n<\cdots<t_N=T$. And, $\tau=T N^{-1}$ on uniform grids. L1 schemes \cite{Sun2006ANM,Shen2021,LyuP2022SFOR} are proposed based on two kinds of order reduction methods, and they achieve optimal convergence on suitable time grids. The convergence rate of L1 \cite{Shen2021} is better than that in \cite{LyuP2022SFOR,An2022}, but the numerical analysis of it is based on an uncertified assumption. Under the regularity assumptions of $u$, L1 scheme on nonuniform grids with finite difference method and finite element method are presented in \cite{LyuP2022SFOR,Xu2024}. Furthermore, an equivalent integro-differential problem is considered in \cite{Mustapha2013}.  The choice of regularization method in the backward problem is based on the regularity of the initial function. In \cite{Jin2016wave}, two numerical schemes on uniform grids are proposed under the assumption $a_1\in D((-\Delta)^{\gamma/2})\subset H^{\gamma}(\Omega)$, $\gamma\in[0,2]$, which shows that the initial singularity may affect the space convergence when $1+(\gamma-2)\alpha/2<0$. To fill the gap of the above results, we consider investigating the L1 method on nonuniform grids under the lower regularity assumptions $a_1\in D((-\Delta)^{q+\epsilon})$, $q\in(\frac{d}{4},1]$ and $0<\epsilon \ll 1$, see Section \ref{frame-forward}.  It shows that taking the suitable observation time $T$, our scheme reaches the optimal convergence rate $O(N^{-(2-\alpha/2)}+h^2)$.

\begin{table}[!ht]\label{REF-method}
\caption{The convergence rates for existing L1 schemes for diffusion-wave equations (\ref{eq-gov}).}
\renewcommand{\arraystretch}{1.25}
\def\temptablewidth{1\textwidth}
\begin{center}
 \begin{tabular*}{\temptablewidth}{@{\extracolsep{\fill}}lccc}\hline
 scheme       & rate              &time grids      &regularity assumption     \\ \hline
 L1 \cite{Sun2006ANM}          & $O(\tau^{3-\alpha}+h^2)$       &uniform        &$u(x,t)\in C^{(4,3)}_{x,t}(\Omega\times[0,T])$                   \\ \hline
L1 \cite{Zhang2012SIAM}          & $O(\tau^{3-\alpha}+h^4)$       &uniform        &$u(x,t)\in C^{(6,3)}_{x,t}(\Omega\times[0,T])$                   \\ \hline
L1 \cite{Shen2021}            & $O(N^{-(3-\alpha)}+h^2)$    &nonuniform        &$a_1\in D((-\Delta)^{3})$                   \\ \hline
L1 \cite{LyuP2022SFOR}         & $O(N^{-(2-\alpha/2)}+h^2)$    &nonuniform        &$u(x,\cdot)\in H^{4}(\Omega)$                   \\ \hline
L1 \cite{An2022}              & $O(N^{-(2-\alpha/2)}+h)$     &nonuniform    &$a_1\in D((-\Delta)^{2})$                  \\ \hline
Our L1  & $O(N^{-(2-\alpha/2)}+t_n^{1-\alpha(1-q-\epsilon)}h^2)$ & nonuniform & $a_1\in D((-\Delta)^{q+\epsilon})$ \\ \hline
\end{tabular*}
\end{center}
\end{table}

It is well known that the Mittag-Leffler functions $E_{\alpha,\beta}(z)=\sum_{k=0}^\infty\frac{z^k}{\Gamma{(\alpha k+\beta)}}$, $z\in \mathbb{C}$, $\alpha\in(0,1)\cup(1,2)$, $\beta\in \mathbb{C}$, play an important role in investigating the behavior of the solution for fractional differential equations. The potential existence of real roots from the Mittag-Leffler functions in the case $\alpha\in(1,2)$ makes the solution to backward problems for fractional wave equations non-unique. One must make additional assumptions on the terminal time, initial value, or observation data \cite{Wen2023solving,WenJ2023backward,Zhang2022backward,Wei2018backward,floridia2020backward}. 
Notably, when $\alpha$ is in $(1,\frac43]$, additional constraints on
the terminal time $T$ are not required, which relaxes the conditions for the stability
of the backward problem in our previous work. 
A Tikhonov regularization method based on scattered observations was proposed. Despite the presence of large observation errors, we can still obtain more precise inversion results by increasing the number of observation points $n$, which is difficult for classical regularization algorithms to achieve. Let $a_1^\sigma$ be the solution found by regularization methods, where $\sigma$ represents the noise level. Taking the optimal regularization parameters $\rho$, the estimates $Err=\|a_1-a_1^\sigma\|_{L^2}$ are presented in the following table. 

\begin{table}[!ht]
\caption{The convergence rates for existing regularization methods for backward diffusion-wave equations.}
\renewcommand{\arraystretch}{1.25}
\def\temptablewidth{1\textwidth}
\begin{center}
 \begin{tabular*}{\temptablewidth}{@{\extracolsep{\fill}}lccc}\hline
 method       &optimal estimate                    &regularity assumption     \\ \hline
 Quasi-reversibility \cite{Wen2023solving}          & $Err\rightarrow0$ as $\sigma\rightarrow0$ ($\frac{\rho(\sigma)}{\sigma}\rightarrow0$) &$a_1\in L^2(\Omega)$ \\ \hline
 Quasi-reversibility \cite{WenJ2023backward}          & $O(\sigma^{\frac12})$ &$a_1\in H_0^2(\Omega)$ \\ \hline
Tikhonov \cite{Wei2018backward}          & $O(\sigma^{\frac{\gamma}{\gamma+2}})$ &$a_1\in D((-\Delta)^{\gamma/2})$ \\ \hline
Quasi-boundary \cite{Zhang2022backward}          & $O(\sigma^{\frac{\gamma}{\gamma+2}})$ &$a_1\in D((-\Delta)^{\gamma/2})$ \\ \hline             
Our method & $O\bigg((\sigma n^{-\frac{1}{2}})^{\frac{8(1+q+\epsilon)}{4(1+q+\epsilon)+d}} +n^{-\frac{4(1+q+\epsilon)}{d}}\bigg)^{\frac{q+\epsilon}{2(1+q+\epsilon)}}$  & $a_1\in D((-\Delta)^{q+\epsilon})$ \\ \hline
\end{tabular*}
\end{center}
\end{table}

Our previous work focuses on the theoretical analysis of backward problems. As a continuation, we consider such problems in the numerical framework. Our goal is to give an answer to the question: \textit{Is it possible to derive an a priori error estimate, showing the way to balance discretization error, the noise, the regularization parameter, and the number of observation points?}
Specifically, our innovation points are as follows.
\begin{enumerate}
    \item The optimal error estimates not only balance discretization error, the noise, and the number of observation points, but also propose an a priori choice of regularization parameters.

    \item To our best knowledge, it is the first work considering numerical framework on nonuniform grids under the lower regularity assumptions. We also propose optimal choices of mesh parameter $r_{opt}$ for different application cases on graded meshes $t_n=T(n/N)^r$, $r\ge1$. Specifically, $r_{opt}=2$, $1+\frac{2}{2-\alpha}$ for $\Delta a_1\in L^2(\Omega)$ and $a_1\in D((-\Delta)^{\gamma})$, $\gamma\in(\frac{d}{4},1)$, respectively.
    
    \item In our previous work \cite{LiZhang2024}, there are two cases of observation time $T$ for the stability of the backward problem to the fractional wave equations when $\alpha$ is in $(1, \frac{4}{3}]$ or $(\frac{4}{3},2)$. In our numerical framework, we find that optimal error estimates can be obtained under the same strategy of observation time $T$.
    
\end{enumerate} 

The structure of this paper is as follows. In Section \ref{frame-forward}, a L1 numerical framework is proposed for the forward problem. Convergence of it is presented under the lower regularity assumptions. The strategy of observation time $T$ for the backward problem is given based on theoretical results. In Section \ref{sec-regularization}, we introduce a scattered point measurement-based regularization method and derive the optimal error estimates of stochastic convergence under the numerical framework. Regularity assumptions used in Section \ref{frame-forward} are confirmed in Section \ref{sec-reg}. Numerical experiments are carried out to verify the theoretical results in Section \ref{sec-num}.

\section{Numerical framework for forward problems}\label{frame-forward}
Actually, weak singularity of the solution has become an important subject in numerical analysis for fractional diffusion wave equations. Recently, a novel order reduction method \cite{LyuP2022SFOR}, called symmetric fractional order reduction(SFOR), was proposed such that the analysis techniques \cite{LiaoH2018L1,LiaoH2021L2_1} on temporal nonuniform mesh work successfully. Although this method has been applied to numerically solve multiple problems, its feasibility has not been fully considered. 

\subsection{Feasibility of the SFOR method}
The main idea is presented in Lemma \ref{lem-sym}. It has certain requirements for the smoothness of the solution.
\begin{lemma}\label{lem-sym}
For $\alpha\in(1,2)$ and $u(t)\in C^1([0,T])\cap C^2((0,T])$, we have
\[
\partial_t^\alpha u(t)=\partial_t^{\frac{\alpha}{2}}(\partial_t^{\frac{\alpha}{2}}u(t))-u'(0)\omega_{2-\alpha}(t),
\]
where $\omega_{p}(t)=\frac{t^{p-1}}{\Gamma{(p)}}$, $p\geq0$.
\end{lemma}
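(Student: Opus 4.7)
The plan is to reduce the claim to the semigroup property of Riemann--Liouville operators and a careful bookkeeping of the initial traces. First I would recall the well-known link between Caputo and Riemann--Liouville derivatives: for $\beta\in(n-1,n)$ and a suitably regular $u$,
\[
\partial_t^\beta u(t) \;=\; D_{RL}^{\beta} u(t) \;-\; \sum_{k=0}^{n-1}\frac{u^{(k)}(0)}{\Gamma(k+1-\beta)}\,t^{k-\beta},
\]
where $D_{RL}^{\beta} = \frac{d^n}{dt^n}I^{\,n-\beta}$ is the Riemann--Liouville derivative and $I^p$ is the Riemann--Liouville integral of order $p$. Applied with $\beta=\alpha\in(1,2)$ this yields $\partial_t^\alpha u = D_{RL}^\alpha u - u(0)\omega_{1-\alpha}(t) - u'(0)\omega_{2-\alpha}(t)$, while with $\beta=\alpha/2\in(\tfrac12,1)$ it gives $\partial_t^{\alpha/2} u = D_{RL}^{\alpha/2}u - u(0)\omega_{1-\alpha/2}(t)$.

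Next I would set $v(t):=\partial_t^{\alpha/2}u(t) = I^{1-\alpha/2}u'(t)$ and verify that $v\in C([0,T])$ with $v(0)=0$. Since $u\in C^1([0,T])$, the function $u'$ is bounded, so
\[
|v(t)| \;\le\; \frac{\|u'\|_\infty}{\Gamma(1-\alpha/2)}\int_0^t (t-\tau)^{-\alpha/2}\,d\tau \;=\; \frac{\|u'\|_\infty}{\Gamma(2-\alpha/2)}\,t^{1-\alpha/2}\;\longrightarrow\;0,
\]
so the Caputo-to-Riemann--Liouville correction at order $\alpha/2$ applied to $v$ vanishes, and $\partial_t^{\alpha/2}v = D_{RL}^{\alpha/2}v$. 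Substituting the expression for $v$ and using linearity,
\[
\partial_t^{\alpha/2}\!\bigl(\partial_t^{\alpha/2}u\bigr) \;=\; D_{RL}^{\alpha/2}\bigl(D_{RL}^{\alpha/2}u\bigr) \;-\; u(0)\,D_{RL}^{\alpha/2}\omega_{1-\alpha/2}(t).
\]
The power rule $D_{RL}^{\alpha/2}\omega_{1-\alpha/2} = \omega_{1-\alpha}$ then handles the initial-value term.

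The core step, and where I expect the main technical obstacle, is justifying the Riemann--Liouville semigroup identity $D_{RL}^{\alpha/2}\bigl(D_{RL}^{\alpha/2}u\bigr) = D_{RL}^{\alpha}u$ under only $u\in C^1([0,T])\cap C^2((0,T])$. This identity is \emph{not} automatic for general integrable functions; it rests on $I^{1-\alpha/2}I^{1-\alpha/2} = I^{2-\alpha}$ (Riemann--Liouville semigroup for integrals, a direct Fubini computation) together with permissibility of the two outer differentiations. Because $u''$ may blow up at $t=0$, I would argue by rewriting $u'(t) = u'(0)+\int_0^t u''(s)\,ds$, splitting $I^{1-\alpha/2}u'$ into an explicit power-type piece plus a term controlled by $u''\in L^1_{\mathrm{loc}}$, and then using dominated convergence to move the outer derivative inside the integrals.

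Once the semigroup identity is established, combining everything gives $\partial_t^{\alpha/2}(\partial_t^{\alpha/2}u) = D_{RL}^\alpha u - u(0)\omega_{1-\alpha}(t) = \partial_t^\alpha u + u'(0)\omega_{2-\alpha}(t)$, which is exactly the desired relation after rearrangement. As a rapid sanity check I would perform the same computation at the Laplace-transform level: using $\mathcal L\{\partial_t^{\alpha/2}u\}(s) = s^{\alpha/2}\widehat u(s)-s^{\alpha/2-1}u(0)$ twice and $v(0)=0$ gives $\mathcal L\{\partial_t^{\alpha/2}(\partial_t^{\alpha/2}u)\} = s^\alpha\widehat u - s^{\alpha-1}u(0)$, whose difference with $\mathcal L\{\partial_t^\alpha u\} = s^\alpha\widehat u - s^{\alpha-1}u(0) - s^{\alpha-2}u'(0)$ is $s^{\alpha-2}u'(0) = \mathcal L\{u'(0)\omega_{2-\alpha}(t)\}$, confirming the formula.
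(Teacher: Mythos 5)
Your plan is correct and the Laplace-transform check confirms the identity, but it takes a genuinely different route from the paper. The paper (deferring full details to the cited SFOR reference) works entirely with the Caputo form: a single integration by parts in $\partial_t^{\alpha/2}u=\int_0^t\omega_{1-\alpha/2}(t-s)u'(s)\,ds$ gives
\[
\partial_t^{\alpha/2}u(t)=u'(0)\,\omega_{2-\alpha/2}(t)+\int_0^t\omega_{2-\alpha/2}(t-s)u''(s)\,ds,
\]
after which one differentiates (the boundary term at $s=t$ vanishes because $\omega_{2-\alpha/2}(0)=0$), applies $I^{1-\alpha/2}$ once more, and concludes from the two elementary identities $I^{1-\alpha/2}\omega_{1-\alpha/2}=\omega_{2-\alpha}$ and $I^{1-\alpha/2}I^{1-\alpha/2}u''=I^{2-\alpha}u''=\partial_t^\alpha u$. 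You instead convert Caputo to Riemann--Liouville at both orders, use $v(0)=0$ to kill the correction term for the outer derivative, and reduce everything to the RL composition rule $D_{RL}^{\alpha/2}D_{RL}^{\alpha/2}u=D_{RL}^{\alpha}u$ plus the power rule. Both arguments ultimately rest on the same semigroup identity for fractional \emph{integrals}, but the paper's version never invokes the RL composition rule, whose justification is exactly the delicate step you flag as your main obstacle (it requires checking that the lower-order trace $[I^{1-\alpha/2}u](0^+)$ vanishes and that the two outer differentiations are permissible despite the possible blow-up of $u''$ at $t=0$). Your proposed repair --- writing $u'(t)=u'(0)+\int_0^t u''(s)\,ds$ and splitting off the explicit power piece --- is essentially a reconstruction of the paper's integration by parts, so once you fill in that step the two proofs coincide in substance; what your framing buys is a cleaner conceptual picture (everything is a statement about RL operators plus initial traces) and a built-in consistency check, at the cost of importing the composition machinery. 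One small point to make explicit in either route: the stated hypothesis $u\in C^1([0,T])\cap C^2((0,T])$ must be read as implying $\omega_{2-\alpha/2}*|u''|<\infty$ near $t=0$ (the paper notes $|u''(t)|\le C(1+t^\gamma)$, $\gamma>-1$, suffices), and you should verify that $v=\partial_t^{\alpha/2}u$ is itself differentiable on $(0,T]$ before applying the outer Caputo derivative --- both facts drop out of the explicit decomposition of $v$ but are not automatic from $v\in C([0,T])$ with $v(0)=0$ alone.
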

The proof is given in \cite{LyuP2022SFOR}. Here, we just focus on the key step of it as follows
\begin{align*}
\partial_t^{\frac{\alpha}{2}}u(t)
&=\int_0^t\omega_{1-\frac{\alpha}{2}}(t-s)u'(s)ds\\
&=-u'(s)\omega_{2-\frac{\alpha}{2}}(t-s)|_0^t+\int_0^t\omega_{2-\frac{\alpha}{2}}(t-s)u''(s)ds\\
&=u'(0)\omega_{2-\frac{\alpha}{2}}(t)+\int_0^t\omega_{2-\frac{\alpha}{2}}(t-s)u''(s)ds.
\end{align*}
Obviously, the integral $\int_0^t\omega_{2-\frac{\alpha}{2}}(t-s)u''(s)ds$ exists
when $|u''(t)|\le C(1+t^\gamma)$, $\gamma> -1$. It implies that the SFOR method works under the conditions $a_0=0$ and $a_1\in D((-\Delta)^{\gamma+\epsilon})$, $\gamma\in(\frac{d}{4},1]$ and $0<\epsilon \ll 1$, see Theorems \ref{thm-reg-maxi-1} and \ref{thm-reg-maxi-2}. And, $|v(0)|=0$ is derived from $u(t)\in C^1([0,T])$. Let $\nu=\alpha/2$, model (\ref{eq-gov}) is transformed into the following form
\begin{equation}\label{eq-gov-trans}
\begin{cases}
  \partial_t^\nu v - \Delta u = a_1(x)\omega_{2-\alpha}(t), & (x,t) \in \Omega \times (0,T), \\
  v  = \partial_t^\nu u, & (x,t) \in \Omega \times (0,T), \\
  u(x,0) = v(x,0)=0, & x \in \Omega, \\
  u(x,t) = v(x,t)=0, & (x,t) \in \partial \Omega \times (0,T).
\end{cases}
\end{equation}
In the next subsections, a semi-discrete time scheme is presented for Eq. (\ref{eq-gov-trans}). The stability and convergence of them are derived under some reasonable regularity assumptions. For spatial discretization, we adopt standard Galerkin method with continuous piecewise linear finite elements. Let $u_h$ be the space semi-discrete solution with a mesh size $h$. Following the idea in \cite{Jin2016wave}, the spatial error $u-u_h$ is presented in Theorem \ref{thm-semi-space}. When $\alpha(1-\gamma-\epsilon)<1$, prefactor $t^{1-\alpha(1-\gamma-\epsilon)}\rightarrow\infty$, $t\rightarrow0$, which reflects the initial singularity. 
\begin{theorem}\label{thm-semi-space}
If $a_0=0$, $a_1\in D((-\Delta)^{\gamma+\epsilon})$, $\gamma\in(\frac{d}{4},1]$ and $0<\epsilon \ll 1$, then
\[
\|u(t)-u_h(t)\|_{L^2}+h\|\nabla(u(t)-u_h(t))\|_{L^2} \leq Ch^2t^{1-\alpha(1-\gamma-\epsilon)}\|(-\Delta)^{\gamma+\epsilon}a_1\|_{L^2}.
\] 
\end{theorem}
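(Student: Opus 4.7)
The plan is to follow the Laplace-transform/resolvent strategy of \cite{Jin2016wave}. Taking the Laplace transform in \eqref{eq-gov} with $a_0=0$ and $f=0$ gives $\hat u(z)=z^{\alpha-2}(z^\alpha-\Delta)^{-1}a_1$, and analogously $\hat u_h(z)=z^{\alpha-2}(z^\alpha-\Delta_h)^{-1}P_ha_1$ for the semidiscrete solution, with $\Delta_h$ the discrete Laplacian on $V_h$ and $P_h$ the $L^2$-projection onto $V_h$. Inverting along a Hankel contour $\Gamma=\Gamma_{\rho,\theta}$ of radius $\rho=1/t$ and opening angle $\theta\in(\pi/2,\pi/\alpha)$ keeps $\mu=z^\alpha$ in a sector $\Sigma$ strictly inside the resolvent set of $\Delta$, and yields
\[
u(t)-u_h(t)=\frac{1}{2\pi i}\int_\Gamma e^{zt}\,z^{\alpha-2}\bigl[(z^\alpha-\Delta)^{-1}a_1-(z^\alpha-\Delta_h)^{-1}P_ha_1\bigr]\,dz,
\]
so the problem reduces to a pointwise-in-$z$ resolvent-error estimate followed by a contour integration.

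The central step is the weighted resolvent bound
\[
\|(\mu-\Delta)^{-1}v-(\mu-\Delta_h)^{-1}P_hv\|_{L^2}\le Ch^{2}|\mu|^{-(\gamma+\epsilon)}\|(-\Delta)^{\gamma+\epsilon}v\|_{L^2},\qquad \mu\in\Sigma.
\]
I would set $w=(\mu-\Delta)^{-1}v$, $w_h=(\mu-\Delta_h)^{-1}P_hv$, split $w-w_h=(w-R_hw)+\eta$ with $R_h$ the Ritz projection and $\eta\in V_h$, and bound both pieces separately. A standard Nitsche duality gives $\|w-R_hw\|_{L^2}\le Ch^{2}\|\Delta w\|_{L^2}$, and the scalar spectral inequality $\lambda^{1-s}|\mu+\lambda|^{-1}\le C|\mu|^{-s}$ (valid mode by mode for $s\in[0,1]$, $\mu\in\Sigma$) applied to the spectral representation of $(\mu-\Delta)^{-1}$ yields $\|\Delta w\|_{L^2}\le C|\mu|^{-(\gamma+\epsilon)}\|(-\Delta)^{\gamma+\epsilon}v\|_{L^2}$. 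For the discrete correction, Galerkin orthogonality gives $\mu(\eta,\chi_h)+(\nabla\eta,\nabla\chi_h)=\mu(R_hw-w,\chi_h)$; testing with $\chi_h=\eta$ and using the sectorial coercivity of $a_\mu(\cdot,\cdot)=\mu(\cdot,\cdot)+(\nabla\cdot,\nabla\cdot)$ produces $\|\eta\|_{L^2}\le C\|w-R_hw\|_{L^2}$ uniformly in $\mu\in\Sigma$.

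Substituting the resolvent bound together with the factor $z^{\alpha-2}$ into the contour representation, the $L^2$ error is controlled by
\[
Ch^{2}\|(-\Delta)^{\gamma+\epsilon}a_1\|_{L^2}\int_\Gamma |e^{zt}|\,|z|^{\alpha-2-\alpha(\gamma+\epsilon)}\,|dz|,
\]
and the rescaling $y=zt$ evaluates this integral to $Ct^{1-\alpha(1-\gamma-\epsilon)}$, giving exactly the claimed prefactor. The $H^1$ part is obtained in parallel using $\|\nabla(w-R_hw)\|_{L^2}\le Ch\|\Delta w\|_{L^2}$ and $\|\nabla\eta\|_{L^2}\le C|\mu|^{1/2}\|w-R_hw\|_{L^2}$: the extra $|z|^{\alpha/2}$ in the integrand is absorbed by the extra factor of $h$ on the left-hand side, leaving the temporal factor unchanged. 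The main obstacle is the weighted resolvent estimate itself; the bookkeeping of the trade-off between powers of $\lambda$ and powers of $|\mu|$, and the uniformity of all constants for $\mu$ on $\Sigma$, must be tracked carefully so that the exponent $1-\alpha(1-\gamma-\epsilon)$ emerges correctly and becomes the initial singularity precisely when $\alpha(1-\gamma-\epsilon)<1$.
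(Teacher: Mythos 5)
Your proposal is correct and follows exactly the route the paper intends: the paper gives no proof of Theorem \ref{thm-semi-space} beyond the remark ``following the idea in \cite{Jin2016wave}'', and your Laplace-transform representation, weighted resolvent error estimate via the Ritz-projection splitting, and Hankel-contour integration with $\rho=1/t$ is precisely the argument of that reference, yielding the prefactor $t^{1-\alpha(1-\gamma-\epsilon)}$ as claimed. No discrepancy with the paper's approach.
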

Specifically, the estimate is consistent in global time when $d=2$, that is $\|u(t)-u_h(t)\|_{L^2}\sim h^2$.
In this case, graded temporal meshes can effectively resolve the impaction of weak singularity. The convergence analysis is prposed in subsection \ref{con-2D}.

\subsection{Time semi-discrete scheme}
Here, the L1 formula (\ref{L1}), one of the most classical discrete method, is used to approximate the Caputo derivative at nonuniform meshes $\{t_n|0=t_0<t_1<\dots <t_N=T\}$. Denote $\tau_n=t_n-t_{n-1}$, $n\geq1$.
\begin{align}\nonumber
\bar\partial_t^\nu v(t_n)
:&= \sum_{k=1}^{n}\int_{t_{k-1}}^{t_k}\omega_{1-\nu}(t_n-s)\frac{v(t_k)-v(t_{k-1})}{\tau_k}ds\\ \label{L1}
&=\sum_{k=1}^nA_{n-k}^{(n)}\nabla_\tau v(t_k),
\end{align}
where $\nabla_\tau v(t_k)=v(t_k)-v(t_{k-1})$, $A_{n-k}^{(n)}:=\int_{t_{k-1}}^{t_k}\frac{\omega_{1-\nu}(t_n-s)}{\tau_k}ds$. The corresponding numerical scheme (\ref{num-scheme})  is as following:
\begin{equation}\label{num-scheme}
\begin{cases}
  \bar\partial_t^\nu V^n - \Delta U^n = a_1(x)\omega_{2-\alpha}(t_n), & 1\le n \le N, \\
  V^n  = \bar\partial_t^\nu U^n, & 1\le n \le N, \\
  U(x,0) = V(x,0)=0, & x \in \Omega, \\
  U(x,t) = V(x,t)=0, & (x,t) \in \partial \Omega \times (0,T),
\end{cases}
\end{equation}
where $U$ and $V$ are numerical solutions corresponding to $u$ and $v$ in (\ref{eq-gov-trans}). Some important lemmas are introduced.

\begin{lemma}\label{FD-ineq}
(\cite{LiaoH2018L1})For $V^n$, $0\leq n\leq N$, one has
\begin{align*}
(\bar\partial_t^\nu V^n,V^n)\geq \frac{1}{2}\bar\partial_t^\nu \|V^n\|_{L^2}^2.    
\end{align*}
\end{lemma}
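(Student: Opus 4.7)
The plan is to reduce the inequality to a purely algebraic identity combined with the monotonicity of the L1 kernel weights $A_{n-k}^{(n)}$. Using (\ref{L1}), I would first expand both sides as
\[
(\bar\partial_t^\nu V^n, V^n) = \sum_{k=1}^{n} A_{n-k}^{(n)} (\nabla_\tau V^k, V^n), \qquad \tfrac{1}{2}\bar\partial_t^\nu \|V^n\|_{L^2}^2 = \tfrac{1}{2}\sum_{k=1}^n A_{n-k}^{(n)} \bigl(\|V^k\|_{L^2}^2 - \|V^{k-1}\|_{L^2}^2\bigr),
\]
so everything reduces to controlling the difference of these two sums.

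Next I would apply the pointwise polarization identity
\[
2(V^k - V^{k-1}, V^n) = \bigl(\|V^k\|_{L^2}^2 - \|V^{k-1}\|_{L^2}^2\bigr) + \bigl(\|V^n - V^{k-1}\|_{L^2}^2 - \|V^n - V^k\|_{L^2}^2\bigr),
\]
which follows from expanding the inner products on the right. Summing this identity against $A_{n-k}^{(n)}$ shows that $2(\bar\partial_t^\nu V^n, V^n) - \bar\partial_t^\nu \|V^n\|_{L^2}^2$ equals the residual
\[
R_n := \sum_{k=1}^n A_{n-k}^{(n)} (b_{k-1} - b_k), \qquad b_k := \|V^n - V^k\|_{L^2}^2, \quad b_n = 0.
\]

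The core step is to verify $R_n \ge 0$. An Abel (summation-by-parts) rearrangement, using $b_n = 0$, gives
\[
R_n = A_{n-1}^{(n)} b_0 + \sum_{k=1}^{n-1} \bigl(A_{n-k-1}^{(n)} - A_{n-k}^{(n)}\bigr)\, b_k.
\]
Here the crucial positivity input is the monotonicity of the L1 weights: since $\omega_{1-\nu}(t_n - s) = (t_n - s)^{-\nu}/\Gamma(1-\nu)$ is positive and strictly decreasing in $s$ on $[0, t_n)$, the averaged values
\[
A_{j}^{(n)} = \frac{1}{\tau_{n-j}} \int_{t_{n-j-1}}^{t_{n-j}} \omega_{1-\nu}(t_n - s)\, ds
\]
are strictly decreasing in $j$, hence each coefficient $A_{n-k-1}^{(n)} - A_{n-k}^{(n)}$ is nonnegative. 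Combined with $b_k \ge 0$ and $A_{n-1}^{(n)} > 0$, this yields $R_n \ge 0$ and thus the desired inequality.

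The only real obstacle is the careful index bookkeeping in the Abel transformation and the verification that the weight monotonicity is preserved on arbitrary nonuniform meshes (it is, because the kernel monotonicity depends only on the form of $\omega_{1-\nu}$, not on the grid spacing). Once this is in place the conclusion is immediate; in fact the argument is precisely Lemma~3.1 of~\cite{LiaoH2018L1}, so a short self-contained reproduction as above—or a direct citation—suffices.
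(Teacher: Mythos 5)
The paper gives no proof of this lemma at all --- it is simply quoted from \cite{LiaoH2018L1} --- so there is nothing to compare against except the standard argument in that reference, which is exactly what you have reproduced: polarization, Abel summation against the weights, and monotonicity of the L1 kernel averages. Your algebra is correct (the polarization identity, the residual $R_n$, the index bookkeeping in the summation by parts, and the use of $b_n=0$ all check out), and the conclusion $R_n\ge 0$ follows. One slip worth fixing: you justify the monotonicity $A_{n-k-1}^{(n)}\ge A_{n-k}^{(n)}$ by saying $\omega_{1-\nu}(t_n-s)=(t_n-s)^{-\nu}/\Gamma(1-\nu)$ is \emph{decreasing} in $s$ on $[0,t_n)$; it is in fact \emph{increasing} in $s$ (as $s\uparrow t_n$ the argument $t_n-s\downarrow 0$ and the kernel blows up). The correct chain is: for an increasing integrand, the average over $[t_{k-1},t_k]$ is at least $\omega_{1-\nu}(t_n-t_{k-1})$, which in turn dominates the average over $[t_{k-2},t_{k-1}]$; hence $A_{n-k}^{(n)}\ge A_{n-k+1}^{(n)}$ on an arbitrary nonuniform mesh, i.e.\ $A_j^{(n)}$ is decreasing in $j$ --- which is precisely the inequality your Abel rearrangement needs. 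With that sign corrected, the proof is complete and self-contained.
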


\begin{lemma}\label{grownwall}
(\cite{LyuP2022SFOR})Let $(g^n)_{n=1}^N$ and $(\lambda_l)_{l=0}^{N-1}$ be given nonnegative sequences. Assume that there exists a constant $\Lambda$ such that $\Lambda\geq\sum_{l=0}^{N-1}\lambda_l$, and that the maximum step satisfies 
$$\max_{1\leq n \leq N}\tau_n\leq \frac{1}{\sqrt[\nu]{\Gamma{(2-\nu)}\Lambda}}.$$
Then, for any nonnegative sequence $(v^k)_{k=0}^N$ and $(w^k)_{k=0}^N$ satisfying
$$\sum_{k=1}^nA_{n-k}^{(n)}\nabla_\tau\big[(v^k)^2+(w^k)^2\big]\leq\sum_{k=1}^n\lambda_{n-k}\big(v^k+w^k\big)^2+(v^n+w^n)g^n,~~1\leq n\leq N,$$
it holds that
$$v^n+w^n\leq 4E_\nu(4\Lambda t_n^\nu)\bigg(v^0+w^0+\max_{1\leq k\leq n}\sum_{j=1}^kP_{k-j}^{(k)}g^j\bigg),~~1\leq n\leq N,$$
where $E_\nu(z)=\sum_{k=0}^\infty\frac{z^k}{\Gamma{(1+k\nu)}}$ is the Mittag-Leffler function.
\end{lemma}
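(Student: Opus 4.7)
The plan is to invert the L1 discretization of $\partial_t^\nu$ via the discrete complementary convolution kernels $P_{n-j}^{(n)}$ (the same kernels that already appear in the conclusion), which by construction satisfy the biorthogonality $\sum_{j=k}^{n}P_{n-j}^{(n)}A_{j-k}^{(j)}=1$ for every $1\leq k\leq n$, together with the summation bound $\sum_{j=1}^{n}P_{n-j}^{(n)}\leq \omega_{1+\nu}(t_n)$. Multiplying the hypothesized inequality by $P_{n-j}^{(n)}$, summing over $j$, and telescoping converts the problem into a scalar Volterra-type inequality for the energy $E^n:=(v^n)^2+(w^n)^2$ with initial value $E^0=(v^0)^2+(w^0)^2$.

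I would then proceed in four steps. First, apply $(v^k+w^k)^2\leq 2E^k$ to convert the $\lambda$-term into one involving only $E^k$. Second, introduce the monotone envelope $Q_n:=\max_{0\leq k\leq n}E^k$ and use the tail bound $\sum_{l=0}^{N-1}\lambda_l\leq\Lambda$ to obtain $\sum_{k=1}^{j}\lambda_{j-k}E^k\leq\Lambda Q_j$. Third, estimate $v^j+w^j\leq\sqrt{2Q_n}$ in the forcing term so that the $g$-factor pulls out of the inner sum and produces exactly the quantity $\max_{1\leq k\leq n}\sum_{j=1}^{k}P_{k-j}^{(k)}g^j$ appearing in the conclusion. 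Fourth, take maxima over $n$ on the left-hand side to arrive at a scalar recursion of the form
\[
Q_n\leq E^0+2\Lambda\sum_{j=1}^{n}P_{n-j}^{(n)}Q_j+\sqrt{2Q_n}\,\max_{1\leq k\leq n}\sum_{j=1}^{k}P_{k-j}^{(k)}g^j.
\]

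The main obstacle is closing this recursion to produce the Mittag-Leffler growth $E_\nu(4\Lambda t_n^\nu)$. This is precisely where the step-size hypothesis $\max_n\tau_n\leq (\Gamma(2-\nu)\Lambda)^{-1/\nu}$ enters: it is sharp enough to guarantee that when the $Q_n$-contribution from the convolution sum is absorbed via monotonicity, the residual coefficient remains strictly below one, so a discrete fractional Gronwall iteration (in the style of Liao--McLean--Zhang) yields the $E_\nu$ factor on $t_n^\nu$. The constant $4$ inside $E_\nu(4\Lambda t_n^\nu)$ is traced to combining the Cauchy--Schwarz loss of $2$ from $(v+w)^2\leq 2(v^2+w^2)$ with the reverse inequality $v^n+w^n\leq\sqrt{2Q_n}$; the stated bound on $v^n+w^n$ then follows by taking a square root and reassembling the forcing term. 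I expect most of the technical difficulty to lie in rigorously verifying the summation and monotonicity properties of $P_{n-j}^{(n)}$ on general nonuniform meshes so that the Gronwall step can be invoked uniformly in $n$.
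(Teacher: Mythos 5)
The paper does not actually prove this lemma: it is imported verbatim from \cite{LyuP2022SFOR}, so there is no in-paper argument to compare against. Your strategy is nevertheless the standard one behind that result (and behind the Liao--McLean--Zhang discrete Gr\"onwall framework): complementary kernels, biorthogonality to telescope the left-hand side into $E^n-E^0$, Cauchy--Schwarz to trade $(v^k+w^k)^2$ for $2E^k$, and the bounds on $P^{(n)}_{n-j}$ to organize the forcing term into $\max_{1\le k\le n}\sum_{j=1}^kP^{(k)}_{k-j}g^j$. That much is right, and your accounting for the factor $4$ is consistent with how the constant arises.

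The concrete soft spot is your closing step. In the recursion
\[
Q_n\le E^0+2\Lambda\sum_{j=1}^{n}P_{n-j}^{(n)}Q_j+\sqrt{2Q_n}\,\max_{1\le k\le n}\sum_{j=1}^{k}P_{k-j}^{(k)}g^j,
\]
the diagonal contribution of the convolution sum is $2\Lambda P_0^{(n)}Q_n\le 2\Lambda\Gamma(2-\nu)\tau_n^{\nu}Q_n\le 2Q_n$ under the stated step restriction $\Gamma(2-\nu)\Lambda\tau_n^{\nu}\le 1$; the coefficient is at most $2$, not ``strictly below one,'' so it cannot be absorbed into the left-hand side as you claim. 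The factor of $2$ from $(v^k+w^k)^2\le 2E^k$ is exactly what kills the naive absorption. The proofs in the literature avoid this in two ways that you should adopt: first, linearize before invoking Gr\"onwall, by picking an index $n^*\le n$ maximizing $v^k+w^k$, using $(v^{n^*})^2+(w^{n^*})^2\ge\frac12(v^{n^*}+w^{n^*})^2$, and dividing through by $v^{n^*}+w^{n^*}$ so the unknown enters linearly rather than under a square root; second, close the resulting linear Volterra inequality not by absorption but by induction against the discrete supersolution $V_j:=E_\nu(4\Lambda t_j^{\nu})$, for which the step-size hypothesis is precisely what guarantees the comparison inequality $4\Lambda\sum_{j=1}^nP^{(n)}_{n-j}V_j\le V_n-1$. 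With those two repairs your outline matches the cited proof; as written, the absorption claim is the one step that would fail.
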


\begin{lemma}\label{P}
For the sequence $(P_{n-j}^{(n)})_{j=1}^n$, some properties are given in \cite{LiaoH2018L1}.
\begin{align*}
&\sum_{j=k}^nP_{n-j}^{(n)}A_{j-k}^{(j)}\equiv1,~~1\leq k\leq n,\\
&0\leq P_{n-j}^{(n)}\leq \Gamma{(2-\nu)}\tau_j^\nu,~~\sum_{j=1}^nP_{n-j}^{(n)}\omega_{1-\nu}(t_j)\leq C,~~ 1\leq j\leq n\leq N.  
\end{align*}
\end{lemma}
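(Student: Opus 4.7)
The plan is to verify the three properties in sequence, treating the first as a definition-driven identity and then deriving the others from it together with pointwise monotonicity of $\omega_{1-\nu}$. I would start by defining $P_{n-j}^{(n)}$ as the discrete complementary convolution kernels via the recursion $P_0^{(n)} = 1/A_0^{(n)}$ and, downward from $k=n-1$ to $k=0$,
\[
P_{n-k}^{(n)} = \frac{1}{A_0^{(k)}}\sum_{j=k+1}^n \bigl(A_{j-k-1}^{(j)} - A_{j-k}^{(j)}\bigr) P_{n-j}^{(n)}.
\]
The orthogonality identity $\sum_{j=k}^n P_{n-j}^{(n)} A_{j-k}^{(j)} \equiv 1$ is then obtained by downward induction on $k$: the base case $k=n$ is immediate from the definition of $P_0^{(n)}$, and in the inductive step the $j=k$ term combines with the sum from $j=k+1$ to $n$ and telescopes into $\sum_{j=k+1}^n A_{j-k-1}^{(j)} P_{n-j}^{(n)} = 1$, which is the identity at level $k+1$.

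Next I would establish nonnegativity and the upper bound. The explicit formula $A_{j-k}^{(j)} = \tau_k^{-1}\int_{t_{k-1}}^{t_k} \omega_{1-\nu}(t_j - s)\,ds$ and the strict monotonicity of $\omega_{1-\nu}(t) = t^{-\nu}/\Gamma(1-\nu)$ for $\nu \in (1/2,1)$ yield $A_0^{(j)} > A_1^{(j)} > \cdots > 0$, so the coefficients $A_{j-k-1}^{(j)} - A_{j-k}^{(j)}$ in the recursion are strictly positive; a second downward induction then gives $P_{n-k}^{(n)} \ge 0$ for all $k$. With nonnegativity in hand, the orthogonality identity immediately yields $P_{n-k}^{(n)} A_0^{(k)} \le 1$, and the direct computation $A_0^{(k)} = \tau_k^{-1}\int_0^{\tau_k} \omega_{1-\nu}(u)\,du = \tau_k^{-\nu}/\Gamma(2-\nu)$ gives the bound $P_{n-k}^{(n)} \le \Gamma(2-\nu)\tau_k^\nu$.

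Finally, for the weighted-sum bound, the clean trick is to specialize the orthogonality identity to $k=1$, giving $\sum_{j=1}^n P_{n-j}^{(n)} A_{j-1}^{(j)} = 1$. Since $\omega_{1-\nu}$ is decreasing, $A_{j-1}^{(j)} = \tau_1^{-1}\int_0^{t_1}\omega_{1-\nu}(t_j - s)\,ds \ge \omega_{1-\nu}(t_j)$, and combined with nonnegativity of $P_{n-j}^{(n)}$ this yields $\sum_{j=1}^n P_{n-j}^{(n)} \omega_{1-\nu}(t_j) \le 1$, so one may take $C=1$. The main obstacle is the positivity step: the recursion defining $P$ is not obviously sign-preserving, and everything downstream (both the upper bound on $P$ and the weighted-sum estimate) depends on first proving that the $A$-kernels are strictly monotone in their first index, which is what makes the induction go through.
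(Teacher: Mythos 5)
Your proof is correct and self-contained: the downward induction for the orthogonality identity, the positivity of $P$ via the strict monotonicity $A_0^{(j)}>A_1^{(j)}>\cdots>0$ of the L1 kernels, the bound $P_{n-k}^{(n)}A_0^{(k)}\le 1$ combined with $A_0^{(k)}=\tau_k^{-\nu}/\Gamma(2-\nu)$, and the specialization of the identity to $k=1$ together with $A_{j-1}^{(j)}\ge\omega_{1-\nu}(t_j)$ all go through. The paper itself supplies no proof of this lemma---it is quoted verbatim from the cited reference---and your argument is essentially the standard construction of the complementary convolution kernels given there, so there is nothing to reconcile.
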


\subsection{Stability}
\begin{theorem}\label{thm-stab-1}
If $a_0=0$, $a_1\in D((-\Delta)^{\gamma+\epsilon})$, $\gamma\in(\frac{d}{4},1]$ and $0<\epsilon \ll 1$, then
\[
\|V^n\|_{L^2}+\|\nabla U^n\|_{L^2} \leq C\bigg(\|V^0\|_{L^2}+\|\nabla U^0\|_{L^2}+2\|a_1\|_{L^2}\max_{1\le k\le n}\sum_{j=1}^kP_{k-j}^{(k)}\omega_{2-\alpha}(t_j)\bigg),~~1\le n\le N.
\]
\end{theorem}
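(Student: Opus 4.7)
The plan is to derive a discrete energy identity by testing the first equation of (\ref{num-scheme}) against $V^n$ in the $L^2$ inner product. After integrating by parts on the Laplacian term (using the homogeneous Dirichlet boundary condition that $V^n=0$ on $\partial\Omega$) and exploiting the relation $V^n=\bar\partial_t^\nu U^n$, this produces
\[
(\bar\partial_t^\nu V^n,V^n)+(\nabla U^n,\bar\partial_t^\nu\nabla U^n)=(a_1\omega_{2-\alpha}(t_n),V^n),
\]
since the L1 operator $\bar\partial_t^\nu$ in (\ref{L1}) is a temporal linear combination and therefore commutes with the spatial gradient.

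Next I would apply Lemma \ref{FD-ineq} to both inner products on the left (the second one componentwise on the gradient, then summed over spatial components), and the Cauchy–Schwarz inequality on the right, to obtain
\[
\tfrac{1}{2}\bar\partial_t^\nu\|V^n\|_{L^2}^2+\tfrac{1}{2}\bar\partial_t^\nu\|\nabla U^n\|_{L^2}^2\le \|a_1\|_{L^2}\omega_{2-\alpha}(t_n)\|V^n\|_{L^2}.
\]
Multiplying by two, expanding $\bar\partial_t^\nu$ via (\ref{L1}) on the left, and using the trivial majorisation $\|V^n\|_{L^2}\le \|V^n\|_{L^2}+\|\nabla U^n\|_{L^2}$ on the right, the inequality fits precisely into the quadratic form required by Lemma \ref{grownwall}, with $v^k=\|V^k\|_{L^2}$, $w^k=\|\nabla U^k\|_{L^2}$, $g^n=2\|a_1\|_{L^2}\omega_{2-\alpha}(t_n)$, and all $\lambda_l=0$ (so $\Lambda=0$, making the step-size restriction in Lemma \ref{grownwall} vacuous).

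Invoking Lemma \ref{grownwall} with $E_\nu(0)=1$ then yields the claimed bound with $C=4$. I do not expect significant obstacles in this argument. The only minor subtleties are (i) verifying that $\bar\partial_t^\nu$ commutes with $\nabla$, which is immediate from the purely temporal definition of the L1 formula, and (ii) the componentwise application of the scalar inequality of Lemma \ref{FD-ineq} to the vector-valued sequence $\nabla U^n$. Before running the estimate one should also briefly note that the implicit step in (\ref{num-scheme}) is uniquely solvable, which follows from a standard elliptic argument after eliminating $V^n$ in favour of $U^n$; the regularity hypothesis $a_1\in D((-\Delta)^{\gamma+\epsilon})$ guarantees in particular that the source term $a_1\omega_{2-\alpha}(t_n)\in L^2(\Omega)$, so that the scheme is well posed and the above manipulations are rigorously justified.
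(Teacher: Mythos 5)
Your proposal is correct and follows essentially the same route as the paper: both reduce to the energy identity $(\bar\partial_t^\nu V^n,V^n)+(\bar\partial_t^\nu\nabla U^n,\nabla U^n)=\omega_{2-\alpha}(t_n)(a_1,V^n)$ (the paper by testing the two equations with $V^n$ and $\Delta U^n$ and adding, you by substituting $V^n=\bar\partial_t^\nu U^n$ after integration by parts, which is the same algebra), then apply Lemma \ref{FD-ineq} and the discrete Gr\"onwall inequality of Lemma \ref{grownwall} with $\lambda_l=0$ and $g^n=2\|a_1\|_{L^2}\omega_{2-\alpha}(t_n)$. The additional remarks on commutation of $\bar\partial_t^\nu$ with $\nabla$ and on solvability of the implicit step are harmless and do not change the argument.
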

\begin{proof}
Taking the inner product with $V^n$ and $\Delta U^n$ for the first two equations of (\ref{eq-gov-trans}), respectively. It gives that
\begin{align*}
(\bar\partial_t^\nu V^n,V^n) - (\Delta U^n,V^n) &= \omega_{2-\alpha}(t_n)(a_1,V^n), \\
(V^n,\Delta U^n)  &= (\bar\partial_t^\nu U^n,\Delta U^n). 
\end{align*}
Adding above equations, it comes that
\begin{align*}
(\bar\partial_t^\nu V^n,V^n) + (\bar\partial_t^\nu \nabla U^n,\nabla U^n) = \omega_{2-\alpha}(t_n)(a_1,V^n).
\end{align*}
By Lemma \ref{FD-ineq}, one has
\begin{align*}
\bar\partial_t^\nu (\|V^n\|_{L^2}^2 + \|\nabla U^n\|_{L^2}^2) 
\le& 2\omega_{2-\alpha}(t_n)\|a_1\|_{L^2}\|V^n\|_{L^2}\\
\le& 2\omega_{2-\alpha}(t_n)\|a_1\|_{L^2}(\|V^n\|_{L^2}+\|\nabla U^n\|_{L^2}).
\end{align*}
The desired result follows from Lemma \ref{grownwall}.
\end{proof}

\begin{remark}
If $d=1$ in Theorem \ref{thm-stab-1}, then the $L^\infty$ stability is derived by the embedding inequality
\[
\|U^n\|_{L^\infty} \leq C\bigg(\|V^0\|_{L^2}+\|\nabla U^0\|_{L^2}+2\|a_1\|_{L^2}\max_{1\le k\le n}\sum_{j=1}^kP_{k-j}^{(k)}\omega_{2-\alpha}(t_j)\bigg),~~1\le n\le N.
\]
\end{remark}

\begin{theorem}\label{thm-stab-2}
If $a_0=0$ and $a_1\in D((-\Delta)^{\gamma+\epsilon})$, $\gamma\in(\frac{1}{2},1]$ and $0<\epsilon \ll 1$, then
\[
\|\nabla V^n\|_{L^2}+\|\Delta U^n\|_{L^2} \leq C\bigg(\|\nabla V^0\|_{L^2}+\|\Delta U^0\|_{L^2}+2\|\nabla a_1\|_{L^2}\max_{1\le k\le n}\sum_{j=1}^kP_{k-j}^{(k)}\omega_{2-\alpha}(t_j)\bigg)
\]
is valid for any $1\le n\le N$. Here the constant $C>0$ is independent of $n$.
\end{theorem}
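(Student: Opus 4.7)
The plan is to mimic the proof of Theorem \ref{thm-stab-1}, but applied one derivative higher: instead of testing the two equations of (\ref{num-scheme}) with $V^n$ and $\Delta U^n$, I will test them with $-\Delta V^n$ and $-\Delta U^n$ (after first applying a Laplacian to the second equation). The stronger regularity assumption $\gamma>\frac12$ is there precisely to guarantee that these test functions belong to $L^2$; by the regularity results (Theorems \ref{thm-reg-maxi-1} and \ref{thm-reg-maxi-2}) it provides $\nabla a_1\in L^2$ and, via the scheme, $\nabla V^n,\Delta U^n\in L^2$, so all inner products below are well defined.

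First I would note that $\bar\partial_t^\nu$ commutes with $\Delta$, so from the second equation of (\ref{num-scheme}) one has $\Delta V^n=\bar\partial_t^\nu\Delta U^n$. Next, taking the $L^2$ inner product of the first equation with $-\Delta V^n$ and integrating by parts yields
\[
(\bar\partial_t^\nu\nabla V^n,\nabla V^n)+(\Delta U^n,\Delta V^n)=\omega_{2-\alpha}(t_n)(\nabla a_1,\nabla V^n),
\]
while the $L^2$ inner product of $\Delta V^n=\bar\partial_t^\nu\Delta U^n$ with $-\Delta U^n$ gives
\[
-(\Delta V^n,\Delta U^n)=-(\bar\partial_t^\nu\Delta U^n,\Delta U^n).
\]
Adding the two identities, the cross terms $(\Delta U^n,\Delta V^n)$ cancel and I obtain
\[
(\bar\partial_t^\nu\nabla V^n,\nabla V^n)+(\bar\partial_t^\nu\Delta U^n,\Delta U^n)=\omega_{2-\alpha}(t_n)(\nabla a_1,\nabla V^n),
\]
which is the exact analogue of the identity used in Theorem \ref{thm-stab-1} but one derivative higher.

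From here the remainder is routine and parallels the earlier proof. Applying Lemma \ref{FD-ineq} to each term on the left and bounding $(\nabla a_1,\nabla V^n)$ by Cauchy--Schwarz leads to
\[
\bar\partial_t^\nu\bigl(\|\nabla V^n\|_{L^2}^2+\|\Delta U^n\|_{L^2}^2\bigr)\le 2\omega_{2-\alpha}(t_n)\|\nabla a_1\|_{L^2}\bigl(\|\nabla V^n\|_{L^2}+\|\Delta U^n\|_{L^2}\bigr),
\]
and then Lemma \ref{grownwall} (with $v^k=\|\nabla V^k\|_{L^2}$, $w^k=\|\Delta U^k\|_{L^2}$, all $\lambda_l=0$ so that $\Lambda$ can be taken arbitrarily small and the step-size restriction is vacuous, and $g^n=2\omega_{2-\alpha}(t_n)\|\nabla a_1\|_{L^2}$) delivers exactly the claimed bound.

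The only nontrivial point, rather than an obstacle per se, is verifying that the test functions $-\Delta V^n$ and $-\Delta U^n$ have the required regularity and that the integration by parts produces no boundary contribution. Both follow from the homogeneous Dirichlet boundary conditions in (\ref{num-scheme}) together with the hypothesis $a_1\in D((-\Delta)^{\gamma+\epsilon})$ with $\gamma>\frac12$, which by the regularity theorems quoted earlier propagates through the SFOR splitting so that $V^n$ and $U^n$ inherit the $H^1$- and $H^2$-regularity needed to justify every step.
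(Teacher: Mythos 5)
Your proposal is correct and follows essentially the same route as the paper: test the two equations one derivative higher to obtain the energy identity $(\bar\partial_t^\nu \nabla V^n,\nabla V^n)+(\bar\partial_t^\nu \Delta U^n,\Delta U^n)=\omega_{2-\alpha}(t_n)(\nabla a_1,\nabla V^n)$, then apply Lemma \ref{FD-ineq} and the discrete Gr\"onwall Lemma \ref{grownwall}. The only cosmetic difference is that the paper tests the second equation directly with $-\Delta^2 U^n$ rather than first applying $\Delta$ and testing with $-\Delta U^n$; after integration by parts these yield the identical cancellation.
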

\begin{proof}
Taking the inner product with $-\Delta V^n$ and $-\Delta^2 U^n$ for the first two equations of (\ref{eq-gov-trans}), respectively. It gives that
\begin{align*}
-(\bar\partial_t^\nu V^n,\Delta V^n) + (\Delta U^n,\Delta V^n) &= -\omega_{2-\alpha}(t_n)(a_1,\Delta V^n), \\
-(V^n,\Delta^2 U^n)  &= -(\bar\partial_t^\nu U^n,\Delta^2 U^n). 
\end{align*}
Adding above equations, it comes that
\begin{align*}
(\bar\partial_t^\nu \nabla V^n,\nabla V^n) + (\bar\partial_t^\nu \Delta U^n,\Delta U^n) = \omega_{2-\alpha}(t_n)(\nabla a_1,\nabla V^n).
\end{align*}
By Lemma \ref{FD-ineq}, one has
\begin{align*}
\bar\partial_t^\nu (\|\nabla V^n\|_{L^2}^2 + \|\Delta U^n\|_{L^2}^2) 
&\le 2\omega_{2-\alpha}(t_n)\|\nabla a_1\|_{L^2}\|\nabla V^n\|_{L^2}\\
&\le 2\omega_{2-\alpha}(t_n)\|\nabla a_1\|_{L^2}(\|\nabla V^n\|_{L^2}+\|\Delta U^n\|_{L^2}).
\end{align*}
The desired result follows from Lemma \ref{grownwall}.
\end{proof}

\begin{remark}
When $d=2$, $a_0=0$, $a_1\in D((-\Delta)^{\gamma+\epsilon})$, $\gamma\in(\frac{d}{4},1]$  and $0<\epsilon \ll 1$, from Theorem \ref{thm-stab-2} and the embedding inequality, one gets 
\[
\|U^n\|_{L^\infty} \leq C\bigg(\|\nabla V^0\|_{L^2}+\|\Delta U^0\|_{L^2}+2\|\nabla a_1\|_{L^2}\max_{1\le k\le n}\sum_{j=1}^kP_{k-j}^{(k)}\omega_{2-\alpha}(t_j)\bigg),~~1\le n\le N.
\]
\end{remark}

\subsection{Convergence of the case in $\mathbb{R}^2$}\label{con-2D}
In this part, we propose a convergence estimate for the numerical scheme (\ref{num-scheme}). Our main focus is on the bounded domain $\Omega\subset\mathbb{R}^2$.  The convergence analysis is considered under the following conditions: $a_0=0$ and $a_1\in D((-\Delta)^{\gamma+\epsilon})$, $\gamma\in(\frac{1}{2},1]$, $0<\epsilon \ll 1$. This is consistent with the feasibility conditions for the SFOR framework. From Theorem \ref{thm-semi-space}, we know that the spatial error is consistent in global time for the two-dimensional situation. Graded temporal meshes can be used directly to resolve the initial singularity. However, the case in $\mathbb{R}^1$ is more complicated. The choice of time step size may affect the spatial error as $t\rightarrow0$ if $a_0=0$ and $a_1\in D((-\Delta)^{\gamma+\epsilon})$, $\gamma\in(\frac{1}{4},1]$, $0<\epsilon \ll 1$. Therefore, convergence is discussed in two subsections \ref{con-2D}-\ref{con-1D}. 

Eq. (\ref{eq-gov-trans}) becomes the following form at $t_n=T(n/N)^r$, $r\geq1$, $0\le n\le N$. It is easy to verify that $\tau_n\leq CN^{-1}t_n^{1-\frac{1}{r}}$, $n\geq2$. Denote $r_1^n:=\bar\partial_t^\nu v^n-\partial_t^\nu v^n$ and $r_2^n:=\partial_t^\nu u^n-\bar\partial_t^\nu u^n$.
\begin{equation}\nonumber
\begin{cases}
  \bar\partial_t^\nu v^n - \Delta u^n = a_1(x)\omega_{2-\alpha}(t_n)+r_1^n, & 1\le n\le N, \\
  v^n  = \bar\partial_t^\nu u^n + r_2^n, & 1\le n\le N, \\
  u(x,0) = v(x,0)=0, & x \in \Omega, \\
  u(x,t) = v(x,t)=0, & (x,t) \in \partial \Omega \times (0,T).
\end{cases}
\end{equation}
Denote $\bar u:=u^n-U^n$ and $\bar v:=v^n-V^n$. One has the error system (\ref{error-system}):
\begin{equation}\label{error-system}
\begin{cases}
  \bar\partial_t^\nu \bar v^n - \Delta \bar u^n = r_1^n, & 1\le n\le N, \\
  \bar v^n  = \bar\partial_t^\nu \bar u^n + r_2^n, & 1\le n\le N, \\
  \bar u(x,0) = \bar v(x,0)=0, & x \in \Omega, \\
  \bar u(x,t) = \bar v(x,t)=0, & (x,t) \in \partial \Omega \times (0,T).
\end{cases}
\end{equation}

The estimate of $r_1^n$ is based on the regularity result of Theorem \ref{thm-reg-L2-v}.
\begin{lemma}\label{lem-r-1}
For $r_1^n$ in (\ref{error-system}), it holds that
\[
\|r_1^n\| \le Ct_n^{-\nu}N^{-q_1}\|a_1\|_{L^2},~~q_1=\min\{r(1-\nu),2-\nu\}.
\]
\end{lemma}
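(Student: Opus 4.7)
The plan is to treat $r_1^n$ as the standard L1 truncation error for $\partial_t^\nu v$ and to exploit the regularity of $v=\partial_t^\nu u$ obtained from the (cited) Theorem \ref{thm-reg-L2-v}, which should give pointwise bounds of the form $\|v'(t)\|_{L^2}\le Ct^{-\nu}\|a_1\|_{L^2}$ and $\|v''(t)\|_{L^2}\le Ct^{-\nu-1}\|a_1\|_{L^2}$ (this is the typical behaviour forced by $\Delta u=\partial_t^\nu v-a_1\omega_{2-\alpha}$). Writing $\Pi v$ for the piecewise linear time interpolant of $v$, one has
\[
r_1^n = \sum_{k=1}^n\int_{t_{k-1}}^{t_k}\omega_{1-\nu}(t_n-s)\bigl[(\Pi v)'(s)-v'(s)\bigr]\,ds,
\]
so the task reduces to bounding the local interpolation error $(\Pi v)'(s)-v'(s)$ on each subinterval in terms of $v''$, weighting by the convolution kernel, and then summing.

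On the first subinterval $(0,t_1]$, where $v''$ is not integrable, I would bypass the interpolation identity and instead bound the two terms of $(\Pi v)'-v'$ separately: $(\Pi v)'|_{(0,t_1)} = v(t_1)/\tau_1 = \tau_1^{-1}\int_0^{t_1}v'(s)\,ds$ is controlled by $\int_0^{t_1}s^{-\nu}\,ds\lesssim t_1^{1-\nu}/\tau_1$, and $\int_0^{t_1}\omega_{1-\nu}(t_n-s)|v'(s)|\,ds$ is handled directly by the $t^{-\nu}$ bound. Using $\omega_{1-\nu}(t_n-s)\le\omega_{1-\nu}(t_n-t_1)\le Ct_n^{-\nu}$ for $n\ge 2$ (and trivially for $n=1$), and the graded–mesh identity $t_1=TN^{-r}$, this contribution is of size $Ct_n^{-\nu}t_1^{1-\nu}\|a_1\|_{L^2}=Ct_n^{-\nu}N^{-r(1-\nu)}\|a_1\|_{L^2}$, which is the source of the $r(1-\nu)$ term in $q_1$.

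For the interior intervals $k\ge 2$ the standard L1 argument applies: the linear-interpolation error in the derivative satisfies $|(\Pi v)'(s)-v'(s)|\le\int_{t_{k-1}}^{t_k}|v''(\xi)|\,d\xi$, so using $\|v''(\xi)\|_{L^2}\le C\xi^{-\nu-1}\|a_1\|_{L^2}$ together with $\tau_k\le CN^{-1}t_k^{1-1/r}$ yields a bound of the form $\sum_{k=2}^n\int_{t_{k-1}}^{t_k}\omega_{1-\nu}(t_n-s)\,ds\cdot\tau_k\,t_{k-1}^{-\nu-1}\|a_1\|_{L^2}$. Splitting this sum at $t_{n/2}$, estimating the near-$t_n$ part via $\omega_{1-\nu}$ integrability and the away-from-$t_n$ part via $\omega_{1-\nu}(t_n-s)\le Ct_n^{-\nu}$, and inserting the mesh relation gives the $N^{-(2-\nu)}$ part with the $t_n^{-\nu}$ prefactor. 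Taking the minimum of the two exponents produces $q_1=\min\{r(1-\nu),2-\nu\}$.

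The main obstacle will be the first-interval contribution: because $v''$ is not integrable near zero and because $\omega_{1-\nu}(t_n-s)$ is itself singular near $s=t_n$, the usual interpolation-plus-second-derivative bookkeeping fails there, and one must replace it by the direct $L^1$ estimate of $v'$ described above; getting the kernel factor down to a clean $t_n^{-\nu}$ (rather than a $t_n^{-\nu}$ for large $n$ and a worse power for $n=1$) requires separating $n=1$ from $n\ge 2$. Apart from that, the calculation is bookkeeping using Theorem \ref{thm-reg-L2-v} and the graded-mesh step-size bound $\tau_n\le CN^{-1}t_n^{1-1/r}$.
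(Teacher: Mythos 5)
Your overall skeleton---splitting off the first subinterval, using $\|v'(t)\|_{L^2}\le Ct^{-\nu}\|a_1\|_{L^2}$ and $\|v''(t)\|_{L^2}\le Ct^{-\nu-1}\|a_1\|_{L^2}$ from Theorem \ref{thm-reg-L2-v}, and invoking $\tau_k\le CN^{-1}t_k^{1-1/r}$---is the same as the paper's, and your treatment of the first subinterval (direct $L^1$ bound on $v'$, kernel bounded by $Ct_n^{-\nu}$, contribution $Ct_n^{-\nu}\tau_1^{1-\nu}\sim Ct_n^{-\nu}N^{-r(1-\nu)}$) matches the paper's $k=0$ case.

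There is, however, a genuine gap in your interior-interval estimate. Bounding $\|(\Pi v)'(s)-v'(s)\|_{L^2}\le\int_{t_{k-1}}^{t_k}\|v''(\xi)\|_{L^2}\,d\xi\le C\tau_k t_{k-1}^{-\nu-1}$ and then integrating the \emph{undifferentiated} kernel $\omega_{1-\nu}(t_n-s)$ is only first-order accurate: for the bulk of the intervals $\int_{t_{k-1}}^{t_k}\omega_{1-\nu}(t_n-s)\,ds\sim\tau_k t_n^{-\nu}$, so your sum is of size $t_n^{-\nu}\sum_k\tau_k^2t_k^{-\nu-1}\lesssim t_n^{-\nu}N^{-\min\{1,\,r(1-\nu)\}}$, which cannot reach $N^{-(2-\nu)}$ since $2-\nu>1$ (with the recommended $r=\frac{2-\nu}{1-\nu}$ the lemma asserts $q_1=2-\nu$, while your route saturates at $N^{-1}$). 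The missing ingredient is the endpoint cancellation $\int_{t_{k-1}}^{t_k}\bigl[(\Pi v)'(s)-v'(s)\bigr]\,ds=0$: on each genuinely interior interval one integrates by parts, transferring the derivative onto the kernel so that the more singular factor $(t_n-s)^{-\nu-1}$ is paired with the \emph{second-order} interpolation error $\|v(s)-\Pi v(s)\|_{L^2}\le C\tau_{k+1}^2\sup_{s}s^{-\nu-1}$. This is precisely the paper's step for $1\le k\le n-2$, and it is what produces the $N^{-(2-\nu)}$ rate; your first-order derivative bound is legitimate only on the final interval $(t_{n-1},t_n)$, where the kernel singularity at $s=t_n$ blocks the integration by parts and where $\tau_n^{1-\nu}\cdot\tau_n\sup s^{-\nu-1}\sim\tau_n^{2-\nu}t_n^{-\nu-1}$ already suffices. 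As written, your argument proves the lemma only with $q_1$ replaced by $\min\{r(1-\nu),1\}$.
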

\begin{proof}
For $n=1$, it holds that
\begin{align*}
\|r_1^1\|_{L^2}
&\le C\bigg\|\int_0^{\tau_1}(\tau_1-s)^{-\nu}\bigg(v'(s)-\frac{v^1-v^0}{\tau_1}\bigg)ds\bigg\|_{L^2}\\
&\le C\bigg\|\int_0^{\tau_1}(\tau_1-s)^{-\nu}\bigg(v'(s)-\frac{1}{\tau_1}\int_0^{\tau_1}v'(y)dy\bigg)ds\bigg\|_{L^2}\\
&\le C\tau_1^{-1}\bigg\|\int_0^{\tau_1}(\tau_1-s)^{-\nu}\int_0^{\tau_1}\big(v'(s)-v'(y)\big)dyds\bigg\|_{L^2}\\
&\le C\tau_1^{-1}\int_0^{\tau_1}(\tau_1-s)^{-\nu}\int_0^{\tau_1}\big(\|v'(s)\|_{L^2}+\|v'(y)\|_{L^2}\big)dyds\\
&\le C\tau_1^{-1}\int_0^{\tau_1}(\tau_1-s)^{-\nu}\int_0^{\tau_1}\big(\|v'(s)\|_{L^2}+\|v'(y)\|_{L^2}\big)dyds\\
&\le C\tau_1^{-1}\int_0^{\tau_1}(\tau_1-s)^{-\nu}\int_0^{\tau_1}\big(s^{-\nu}+y^{-\nu}\big)dyds\\
&\le C\tau_1^{1-2\nu}.
\end{align*}
For the case $n\geq2$, one has
\begin{align*}
\|r_1^n\|_{L^2}
&\le C\bigg\|\sum_{k=0}^{n-1}\int_{t_k}^{t_{k+1}}(t_n-s)^{-\nu}\bigg(v'(s)-\frac{v^{k+1}-v^k}{\tau_{k+1}}\bigg)ds\bigg\|_{L^2}\\
&\le C\sum_{k=0}^{n-1}\bigg\|\int_{t_k}^{t_{k+1}}(t_n-s)^{-\nu}\bigg(v'(s)-\frac{v^{k+1}-v^k}{\tau_{k+1}}\bigg)ds\bigg\|_{L^2}\\
&:=C\sum_{k=0}^{n-1}r_1^{n,k}.
\end{align*}
Then, we consider the estimate of $r_1^n$ term by term. For the case $k=0$, it gives that
\begin{align*}
r_1^{n,0}
&\le C\bigg\|\int_{0}^{t_1}(t_n-s)^{-\nu}\bigg(v'(s)-\frac{v^1-v^0}{\tau_1}\bigg)ds\bigg\|_{L^2}\\
&\le C\int_{0}^{t_1}(t_n-s)^{-\nu}\|v'(s)\|_{L^2}ds+C\tau_1^{-1}\int_0^{t_1}(t_n-s)^{-\nu}\int_0^{t_1}\|v'(y)\|_{L^2}dyds\\
&\le C(t_n-t_1)^{-\nu}\int_0^{t_1}s^{-\nu}ds+C\tau_1^{-1}\int_0^{t_1}(t_n-s)^{-\nu}\int_0^{t_1}y^{-\nu}dyds\\
&\le C(t_n-t_1)^{-\nu}\tau_1^{1-\nu}+C\tau_1^{-\nu}\int_0^{t_1}(t_n-s)^{-\nu}ds\\
&\le C(t_n-t_1)^{-\nu}\tau_1^{1-\nu}\\
&\le Ct_n^{-\nu}\tau_1^{1-\nu},
\end{align*}
where $(t_n-t_1)^{-\nu}=t_n^{-\nu}(1-t_1/t_n)^{-\nu}\le Ct_n^{-\nu}$.

For the case $k= n-1$, following identity is used
\begin{align*}
v'(s)-\frac{v^{n}-v^{n-1}}{\tau_{n}}=\frac{1}{\tau_{n}}\int_{t_{n-1}}^{t_{n}}v'(s)-v'(y)dy=\frac{1}{\tau_{n}}\int_{t_{n-1}}^{t_{n}}\int_{y}^sv''(z)dzdy.
\end{align*}
From Theorem \ref{thm-reg-L2-v}, it holds that
\begin{align*}
\bigg\|v'(s)-\frac{v^{n}-v^{n-1}}{\tau_{n}}\bigg\|_{L^2}
\le \frac{1}{\tau_{n}}\int_{t_{n-1}}^{t_{n}}\int_{\min\{s,y\}}^{\max\{s,y\}}\|v''(z)\|_{L^2}dzdy\le C\tau_{n}\sup_{s\in(t_{n-1},t_n)} s^{-\nu-1}.
\end{align*}
And, one has
\begin{align*}
r_1^{n,n-1}
&\le C\bigg\|\int_{t_{n-1}}^{t_{n}}(t_n-s)^{-\nu}\bigg(v'(s)-\frac{v^{n}-v^{n-1}}{\tau_{n}}\bigg)ds\bigg\|_{L^2}\\
&\le C\tau_{n}\sup_{s\in(t_{n-1},t_n)} s^{-\nu-1}\int_{t_{n-1}}^{t_{n}}(t_n-s)^{-\nu}ds\\
&\le C\tau_n^{2-\nu}\sup_{s\in(t_{n-1},t_n)} s^{-\nu-1}\\
&\le C\bigg(\frac{\tau_n}{t_n}\bigg)^{2-\nu}t_n^{1-2\nu}\\
&\le Ct_n^{-\nu}N^{-q}t_n^{-q/r}t_n^{1-\nu}\\
&\le Ct_n^{-\nu}N^{-q},
\end{align*}
the inequality holds using $\tau_n\leq CN^{-1}t_n^{1-\frac{1}{r}}$.
For the case $1\le k\le n-2$, we have
\begin{align*}
r_1^{n,k}
&\le C\bigg\|\int_{t_{k}}^{t_{k+1}}(t_n-s)^{-\nu}\bigg(v'(s)-\frac{v^{k+1}-v^{k}}{\tau_{k+1}}\bigg)ds\bigg\|_{L^2}\\
&\le C\int_{t_{k}}^{t_{k+1}}(t_n-s)^{-\nu-1}\bigg\|v(s)-\frac{(s-t_k)v^{k+1}+(t_{k+1}-s)v^{k}}{\tau_{k+1}}\bigg\|_{L^2}ds\\
&\le C\tau_{k+1}^2\sup_{s\in(t_k,t_{k+1})}s^{-\nu-1}\int_{t_{k}}^{t_{k+1}}(t_n-s)^{-\nu-1}ds\\
&\le C\bigg(\tau_{k+1}^{2-\nu}t_{k+1}^\nu\sup_{s\in(t_k,t_{k+1})}s^{-\nu-1}\bigg)\bigg(\tau_{k+1}^\nu t_{k+1}^{-\nu}\int_{t_k}^{t_{k+1}}(t_n-s)^{-\nu-1}ds\bigg)\\
&\le C\bigg(\tau_{k+1}^{2-\nu}t_{k+1}^\nu\sup_{s\in(t_k,t_{k+1})}s^{-\nu-1}\bigg)\bigg(\tau_{k+1}^\nu \int_{t_k}^{t_{k+1}}s^{-\nu}(t_n-s)^{-\nu-1}ds\bigg).
\end{align*}
Denote $\Phi_k:=\tau_{k+1}^{2-\nu}t_{k+1}^\nu\sup_{s\in(t_k,t_{k+1})}s^{-\nu-1}\sim N^{-q}$. Hence, we obtain
\begin{align*}
\sum_{k=1}^{n-2}r_1^{n,k}
&\le C\sum_{k=1}^{n-2}\Phi_k\tau_{k+1}^\nu \int_{t_k}^{t_{k+1}}s^{-\nu}(t_n-s)^{-\nu-1}ds\\
&\le C\max_k\Phi_k\bigg(\tau_{n+1}^\nu \int_{t_1}^{t_{n-1}}s^{-\nu}(t_n-s)^{-\nu-1}ds\bigg)\\
&\le Ct_n^{-\nu}N^{-q},
\end{align*}
the last inequality holds by 
\begin{align*}
\int_{t_1}^{t_{n-1}}s^{-\nu}(t_n-s)^{-\nu-1}ds
&\le \int_{t_1}^{t_{n}/2}s^{-\nu}(t_n-s)^{-\nu-1}ds+\int_{t_{n}/2}^{t_{n-1}}s^{-\nu}(t_n-s)^{-\nu-1}ds\\
&\le C(t_n/2)^{1-\nu}(t_n-t_n/2)^{-\nu-1}+C(t_n/2)^{-\nu}\tau_n^{-\nu}\\
&\le Ct_n^{-\nu}\tau_n^{-\nu}+Ct_n^{-\nu}\tau_n^{-\nu}\\
&\le Ct_n^{-\nu}\tau_{n+1}^{-\nu}.
\end{align*}
The proof completes based on above estimates.
\end{proof}

Following the idea in Lemma \ref{lem-r-1}, if $a_1\in D((-\Delta)^{\gamma+\epsilon})$, $\gamma\in(\frac12,1]$, $0<\epsilon \ll 1$, from the regularity of $u$ in Theorem \ref{thm-reg-delta-1}, we get the estimate for $r_2^n$ in Lemma \ref{lem-r-2}.
\begin{lemma}\label{lem-r-2}
For $r_2^n$ in (\ref{error-system}), it holds that
\[
\|\nabla r_2^n\| \le Ct_n^{-\nu}(\tau_1+N^{-q_2})\|(-\Delta)^{\gamma+\epsilon}a_1\|_{L^2},~~q_2=\min\{r(1+\epsilon\alpha),2-\nu\}.
\]
\end{lemma}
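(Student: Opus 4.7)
The strategy mirrors the proof of Lemma \ref{lem-r-1} line by line, with three changes: (i) the variable $v$ is replaced by $u$, (ii) the $L^2$-norm is replaced by the $L^2$-norm of the gradient, and (iii) the regularity input comes from Theorem \ref{thm-reg-delta-1} rather than Theorem \ref{thm-reg-L2-v}. Concretely, I would start from the identity
\[
r_2^n=\sum_{k=0}^{n-1}\int_{t_k}^{t_{k+1}}\omega_{1-\nu}(t_n-s)\Bigl(u'(s)-\tau_{k+1}^{-1}(u^{k+1}-u^k)\Bigr)\,ds,
\]
apply $\nabla$, and split the sum into three regimes: the boundary piece $k=0$, the near-diagonal piece $k=n-1$, and the interior pieces $1\le k\le n-2$.

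For $k=0$ (and for the separate case $n=1$), the hypothesis $a_1\in D((-\Delta)^{\gamma+\epsilon})$ with $\gamma>\tfrac12$ implies via Theorem \ref{thm-reg-delta-1} that $\|\nabla u'(s)\|_{L^2}$ is uniformly bounded by $C\|(-\Delta)^{\gamma+\epsilon}a_1\|_{L^2}$ on $(0,t_1]$. Hence the boundary contribution is controlled directly by $Ct_n^{-\nu}\tau_1\|(-\Delta)^{\gamma+\epsilon}a_1\|_{L^2}$ after using $(t_n-s)^{-\nu}\lesssim t_n^{-\nu}$ on $[0,t_1]$; this is the source of the explicit $\tau_1$ term in the statement, which replaces the $\tau_1^{1-\nu}$ term present in Lemma \ref{lem-r-1}.

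For $k=n-1$, I would use the mean-value identity
\[
u'(s)-\tau_n^{-1}(u^n-u^{n-1})=\tau_n^{-1}\int_{t_{n-1}}^{t_n}\!\int_y^s u''(z)\,dz\,dy,
\]
combine it with the sharp bound $\|\nabla u''(z)\|_{L^2}\le Cz^{-\nu-1+\epsilon\alpha}\|(-\Delta)^{\gamma+\epsilon}a_1\|_{L^2}$ from Theorem \ref{thm-reg-delta-1}, and conclude
\[
\|\nabla r_2^{n,n-1}\|_{L^2}\le C\tau_n^{2-\nu}\sup_{s\in(t_{n-1},t_n)}s^{-\nu-1+\epsilon\alpha}\|(-\Delta)^{\gamma+\epsilon}a_1\|_{L^2}\le Ct_n^{-\nu}N^{-r(1+\epsilon\alpha)}\|(-\Delta)^{\gamma+\epsilon}a_1\|_{L^2},
\]
where the last step uses $\tau_n\le CN^{-1}t_n^{1-1/r}$ on the graded mesh. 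For $1\le k\le n-2$ I would integrate by parts once in $s$ to transfer a derivative onto the kernel, so that a factor $(t_n-s)^{-\nu-1}$ appears and the remainder is bounded by $\tau_{k+1}^2\sup_{s\in(t_k,t_{k+1})}\|\nabla u''(s)\|_{L^2}$. Setting $\Phi_k:=\tau_{k+1}^{2-\nu}t_{k+1}^{\nu-\epsilon\alpha}\sup_{s\in(t_k,t_{k+1})}s^{-\nu-1+\epsilon\alpha}$, the same graded-mesh argument as in Lemma \ref{lem-r-1} shows $\Phi_k\lesssim N^{-r(1+\epsilon\alpha)}$ uniformly in $k$, and the decomposition $[t_1,t_{n-1}]=[t_1,t_n/2]\cup[t_n/2,t_{n-1}]$ of the resulting integral produces the interior contribution $Ct_n^{-\nu}N^{-q_2}\|(-\Delta)^{\gamma+\epsilon}a_1\|_{L^2}$.

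The main obstacle is tracking the sharp exponent $\epsilon\alpha$ through the $k=n-1$ and interior estimates, i.e.\ confirming that the improved regularity $\|\nabla u''(s)\|_{L^2}\lesssim s^{-\nu-1+\epsilon\alpha}$ under $\gamma>\tfrac12$ is what promotes the mesh exponent from $r$ to $r(1+\epsilon\alpha)$, while the cap $2-\nu$ in $q_2$ arises from the intrinsic temporal order of the L1 scheme and prevents artificial super-convergence. Once this is verified, the remaining computations are literal analogues of those already carried out in Lemma \ref{lem-r-1}.
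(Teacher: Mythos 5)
Your overall strategy is exactly the one the paper intends (the paper states this lemma without proof, remarking only that it follows the idea of Lemma \ref{lem-r-1} using the regularity of $u$ from Theorem \ref{thm-reg-delta-1}), and your treatment of the $k=0$ piece and the origin of the explicit $\tau_1$ term are correct. However, there is a genuine error in the key regularity input, and it is fatal to the claimed rate. You assert $\|\nabla u''(z)\|_{L^2}\le Cz^{-\nu-1+\epsilon\alpha}\|(-\Delta)^{\gamma+\epsilon}a_1\|_{L^2}$ ``from Theorem \ref{thm-reg-delta-1}'', but that theorem with $m=2$ gives $\|\partial_t^2(-\Delta)^{\gamma}u\|_{L^2}\le Ct^{\epsilon\alpha-1}\|(-\Delta)^{\gamma+\epsilon}a_1\|_{L^2}$, hence (using $\gamma\ge\tfrac12$ and $\lambda_n\ge\lambda_1>0$) the correct bound is the \emph{less} singular $\|\nabla u''(z)\|_{L^2}\le Cz^{\epsilon\alpha-1}\|(-\Delta)^{\gamma+\epsilon}a_1\|_{L^2}$. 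Your exponent carries a spurious extra factor $z^{-\nu}$, apparently inherited from the bound $\|v''\|\lesssim z^{-\nu-1}$ used for $r_1^n$.

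With your exponent the displayed chain for $k=n-1$ is arithmetically false: writing $\tau_n^{2-\nu}t_n^{-\nu-1+\epsilon\alpha}=(\tau_n/t_n)^{2-\nu}t_n^{1-2\nu+\epsilon\alpha}$ and using $(\tau_n/t_n)^{q}\le CN^{-q}t_n^{-q/r}$ with $q=r(1+\epsilon\alpha)$ yields $CN^{-q}t_n^{-2\nu}$, not $CN^{-q}t_n^{-\nu}$; the missing power $t_n^{-\nu}$ blows up as $t_n\to0$, and the best uniform rate your bound can deliver is $q_2=\min\{r(1-\nu+\epsilon\alpha),\,2-\nu\}$, which is strictly weaker than the stated $q_2=\min\{r(1+\epsilon\alpha),\,2-\nu\}$. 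The same defect propagates into your definition of $\Phi_k$ for the interior sum. The repair is simple: replace $s^{-\nu-1+\epsilon\alpha}$ by $s^{\epsilon\alpha-1}$ throughout. Then $\tau_n^{2-\nu}t_n^{\epsilon\alpha-1}=(\tau_n/t_n)^{2-\nu}t_n^{1-\nu+\epsilon\alpha}\le CN^{-q}t_n^{-\nu}\,t_n^{1+\epsilon\alpha-q/r}$, and the requirement $q\le r(1+\epsilon\alpha)$, capped by $2-\nu$ from the factor $(\tau_n/t_n)^{2-\nu}$, falls out exactly as in the statement; with $\Phi_k:=\tau_{k+1}^{2-\nu}t_{k+1}^{\nu}\sup_{s\in(t_k,t_{k+1})}s^{\epsilon\alpha-1}\lesssim N^{-q_2}$ the interior sum then closes by the same splitting of $\int_{t_1}^{t_{n-1}}s^{-\nu}(t_n-s)^{-\nu-1}ds$ as in Lemma \ref{lem-r-1}.
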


\begin{theorem}\label{thm-conv-1}
If $a_0=0$, $a_1\in D((-\Delta)^{\gamma+\epsilon})$, $\gamma\in(\frac{1}{2},1]$ and $0<\epsilon \ll 1$, then
\[
\|\bar v^n\|_{L^2}+\|\nabla \bar u^n\|_{L^2} \leq C(\tau_1+N^{-q_1}+N^{-q_2})\|(-\Delta)^{\gamma+\epsilon}a_1\|_{L^2},~~1\le n\le N.
\]
\end{theorem}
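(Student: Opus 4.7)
The plan is to mirror the energy argument of Theorem \ref{thm-stab-1}, applied to the error system \eqref{error-system}, but now with $r_1^n$ and $r_2^n$ playing the role of forcing terms whose sizes are controlled by Lemmas \ref{lem-r-1} and \ref{lem-r-2}. Since the conclusion is in the norm $\|\bar v^n\|_{L^2}+\|\nabla\bar u^n\|_{L^2}$, this is exactly the energy combination produced by the same pair of test functions used there.

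The first step is to test the first equation of \eqref{error-system} with $\bar v^n$, giving
\[
(\bar\partial_t^\nu \bar v^n,\bar v^n)-(\Delta\bar u^n,\bar v^n)=(r_1^n,\bar v^n),
\]
and to test the second equation with $-\Delta\bar u^n$, which after integration by parts on the $r_2^n$ term yields
\[
-(\bar v^n,\Delta\bar u^n)=(\bar\partial_t^\nu\nabla\bar u^n,\nabla\bar u^n)+(\nabla r_2^n,\nabla\bar u^n).
\]
Adding these eliminates the cross-term $(\Delta\bar u^n,\bar v^n)$ and produces the identity
\[
(\bar\partial_t^\nu\bar v^n,\bar v^n)+(\bar\partial_t^\nu\nabla\bar u^n,\nabla\bar u^n)=(r_1^n,\bar v^n)-(\nabla r_2^n,\nabla\bar u^n).
\]
I would then apply Lemma \ref{FD-ineq} to each term on the left and Cauchy--Schwarz to the right to obtain the scalar inequality
\[
\bar\partial_t^\nu\bigl(\|\bar v^n\|_{L^2}^2+\|\nabla\bar u^n\|_{L^2}^2\bigr)\le 2\bigl(\|r_1^n\|_{L^2}+\|\nabla r_2^n\|_{L^2}\bigr)\bigl(\|\bar v^n\|_{L^2}+\|\nabla\bar u^n\|_{L^2}\bigr),
\]
which is exactly in the form required by the discrete Gr\"onwall inequality, Lemma \ref{grownwall}, with $\lambda_l\equiv 0$, $v^k=\|\bar v^k\|_{L^2}$, $w^k=\|\nabla\bar u^k\|_{L^2}$, zero initial data, and $g^j=2(\|r_1^j\|_{L^2}+\|\nabla r_2^j\|_{L^2})$.

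The final step is to insert the residual estimates from Lemmas \ref{lem-r-1} and \ref{lem-r-2}, which both give a factor $t_j^{-\nu}=\Gamma(1-\nu)\omega_{1-\nu}(t_j)$ times the desired algebraic rates. Using the summation bound $\sum_{j=1}^k P_{k-j}^{(k)}\omega_{1-\nu}(t_j)\le C$ from Lemma \ref{P} collapses the convolution sum, and after noting $\|a_1\|_{L^2}\le C\|(-\Delta)^{\gamma+\epsilon}a_1\|_{L^2}$ the bound $C(\tau_1+N^{-q_1}+N^{-q_2})\|(-\Delta)^{\gamma+\epsilon}a_1\|_{L^2}$ emerges directly.

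The main obstacle, as far as I can see, is not the energy manipulation itself but the choice of test function for the second equation: testing with $\bar v^n$ alone would leave $(r_2^n,\bar v^n)$, which only yields an $L^2$--$L^2$ pairing and loses the factor $\nabla$ in $r_2^n$ that Lemma \ref{lem-r-2} gives us. Pairing with $-\Delta\bar u^n$ and integrating by parts is what converts $r_2^n$ into the usable quantity $\nabla r_2^n$, and it is precisely this step that forces the stronger regularity hypothesis $\gamma>\tfrac{1}{2}$ (matching the hypothesis of Theorem \ref{thm-stab-2} and Lemma \ref{lem-r-2}) rather than the weaker $\gamma>\tfrac{d}{4}$ used in Theorem \ref{thm-stab-1}.
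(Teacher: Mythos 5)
Your proposal is correct and follows essentially the same route as the paper's proof: test the error system with $\bar v^n$ and $\pm\Delta\bar u^n$, cancel the cross terms, apply Lemma \ref{FD-ineq} and Cauchy--Schwarz, and close with the discrete Gr\"onwall inequality of Lemma \ref{grownwall} together with the kernel bound $\sum_{j=1}^kP_{k-j}^{(k)}\omega_{1-\nu}(t_j)\le C$ from Lemma \ref{P} and the residual estimates of Lemmas \ref{lem-r-1} and \ref{lem-r-2}. Your closing observation about why the $r_2^n$ term must be paired so as to produce $\nabla r_2^n$ (hence the hypothesis $\gamma>\tfrac12$) accurately reflects the structure of the paper's argument.
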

\begin{proof}
Taking the inner product with $\bar v^n$ and $\Delta \bar u^n$ for the first two equations of (\ref{error-system}), respectively. It gives that
\begin{align*}
(\bar\partial_t^\nu \bar v^n,\bar v^n) - (\Delta \bar u^n,\bar v^n) &= (r_1^n,\bar v^n), \\
(\bar v^n,\Delta \bar u^n)  &= (\bar\partial_t^\nu \bar u^n,\Delta \bar u^n)+(r_2^n,\Delta \bar u^n). 
\end{align*}
Adding above equations, it comes that
\begin{align*}
(\bar\partial_t^\nu \bar v^n,\bar v^n) + (\bar\partial_t^\nu \nabla \bar u^n,\nabla \bar u^n) = (r_1^n,\bar v^n)+(\nabla r_2^n,\nabla \bar u^n).
\end{align*}
By Lemma \ref{FD-ineq}, one has
\begin{align*}
\frac12\bar\partial_t^\nu (\|\bar v^n\|_{L^2}^2 + \|\nabla \bar u^n\|_{L^2}^2) &\le \|r_1^n\|_{L^2}\|\bar v^n\|_{L^2}+\|\nabla r_2^n\|_{L^2}\|\nabla \bar u^n\|_{L^2}\\
&\le (\|r_1^n\|_{L^2}+\|\nabla r_2^n\|_{L^2})(\|\bar v^n\|_{L^2}+\|\nabla \bar u^n\|_{L^2}).
\end{align*}
The desired result follows from Lemmas \ref{grownwall} and \ref{P}.
\end{proof}

\begin{remark}\label{re-error-R2}
In \cite{JinB2017sub_pod}, the authors presented the analysis framework for sub-diffusion equations. The convergence of the fully discrete scheme of diffusion-wave equations follows analogously. From the results of Theorems \ref{thm-semi-space} and \ref{thm-conv-1}, it implies that the fully consistent $L^2$ norm error reaches optimal $O(h^2+N^{-(2-\nu)})$ in $\Omega\times(0,T]$, when the time grid parameter $r=\frac{2-\nu}{1-\nu}$.
\end{remark}

\subsection{Convergence of the case in $\mathbb{R}^1$}\label{con-1D}
The framework of convergence is the same as that for Theorem \ref{thm-conv-1}. The truncation error $\|r_1^n\|$, $1\leq n\leq N$ is available from Lemma \ref{lem-r-1}. The point is on the estimate $\|\nabla r_2^n\|$, $1\leq n\leq N$. The regularity of $\|\partial_t^m(-\Delta)^{\frac12}u\|_{L^2}$ is derived from Theorem \ref{thm-reg-delta-2}. Then,  one has the following results.
\begin{lemma}\label{lem-r-2-1D}
For $r_1^n$, $r_2^n$ in (\ref{error-system}) with $\Omega\subset \mathbb{R}^1$, it holds that
\[
\|r_1^n\| \le Ct_n^{-\nu}N^{-q_1}\|a_1\|_{L^2},~~q_1=\min\{r(1-\nu),2-\nu\},
\]
\[
\|\nabla r_2^n\| \le Ct_n^{-\nu}N^{-q_3}\|(-\Delta)^{\gamma+\epsilon}a_1\|_{L^2},~~q_3=\min\{r(1-\nu/2),2-\nu\}.
\]
\end{lemma}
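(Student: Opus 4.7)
The plan is to leverage the already-established proofs of Lemma \ref{lem-r-1} and Lemma \ref{lem-r-2}, noting that the work to be done for Lemma \ref{lem-r-2-1D} amounts to substituting the one-dimensional regularity output of Theorem \ref{thm-reg-delta-2} into the same template. First, the estimate for $\|r_1^n\|_{L^2}$ is entirely dimension-independent: the derivation in Lemma \ref{lem-r-1} used only the $L^2$-in-space bounds on $v'$ and $v''$ supplied by Theorem \ref{thm-reg-L2-v}, which continue to hold verbatim for $\Omega\subset\mathbb{R}^1$. Consequently, the bound $\|r_1^n\|_{L^2}\le Ct_n^{-\nu}N^{-q_1}\|a_1\|_{L^2}$ with $q_1=\min\{r(1-\nu),2-\nu\}$ transfers with no modification.

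For the $\|\nabla r_2^n\|_{L^2}$ estimate, I plan to write
\[
r_2^n=\sum_{k=0}^{n-1}\int_{t_k}^{t_{k+1}}\omega_{1-\nu}(t_n-s)\Bigl(u'(s)-\frac{u^{k+1}-u^k}{\tau_{k+1}}\Bigr)\,ds,
\]
apply $(-\Delta)^{1/2}$ inside the integrals (using the $H_0^1$-equivalence $\|\nabla\cdot\|_{L^2}\simeq\|(-\Delta)^{1/2}\cdot\|_{L^2}$), and repeat the four-case split of Lemma \ref{lem-r-1}: the initial-step case $n=1$, the endpoint pieces $k=0$ and $k=n-1$, and the interior range $1\le k\le n-2$. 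The telescoping identity $u'(s)-(u^{k+1}-u^k)/\tau_{k+1}=\tau_{k+1}^{-1}\int_{t_k}^{t_{k+1}}\int_y^s u''(z)\,dz\,dy$, the integration-by-parts trick, and the graded-mesh bound $\tau_n\le CN^{-1}t_n^{1-1/r}$ all carry over unchanged; the only substitution is that the scalar pointwise bounds on $\|v'\|_{L^2}$ and $\|v''\|_{L^2}$ get replaced by the one-dimensional regularity bounds on $\|(-\Delta)^{1/2}\partial_t u(s)\|_{L^2}$ and $\|(-\Delta)^{1/2}\partial_t^2 u(s)\|_{L^2}$ delivered by Theorem \ref{thm-reg-delta-2}.

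The main obstacle is the careful accounting of the shifted time-singularity exponent that this substitution produces. In the two-dimensional Lemma \ref{lem-r-2} the factor $\Phi_k=\tau_{k+1}^{2-\nu}t_{k+1}^{\nu}\sup_{s\in(t_k,t_{k+1})}\|\Delta u''(s)\|_{L^2}$ scaled like $N^{-r(1+\epsilon\alpha)}$ because the 2D bound on $\|\Delta u''(s)\|_{L^2}$ blows up only like a very small negative power of $s$ near $0$. In one dimension Theorem \ref{thm-reg-delta-2} gives a weaker bound on $\|(-\Delta)^{1/2}u''(s)\|_{L^2}$, degraded by an extra $\nu/2$ in the time-singularity exponent; after inserting this into the analogous $\Phi_k$ and using $\tau_{k+1}\le CN^{-1}t_{k+1}^{1-1/r}$, a short computation should produce $\Phi_k\lesssim N^{-r(1-\nu/2)}$, which is the first alternative in $q_3$. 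The ceiling $2-\nu$ is the standard upper bound imposed by the L1 consistency order itself. Combining the interior estimate with the boundary pieces $k=0$, $k=n-1$ (handled exactly as in Lemma \ref{lem-r-1} but with the shifted exponent) and applying Lemma \ref{P} to the convolution sum then yields the stated bound on $\|\nabla r_2^n\|_{L^2}$.
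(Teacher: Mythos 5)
Your proposal is correct and follows essentially the same route as the paper, which itself gives no detailed proof here but simply points to the template of Lemma \ref{lem-r-1} for $r_1^n$ and to the regularity bound $\|\partial_t^m(-\Delta)^{\frac12}u\|_{L^2}\le Ct^{1-m-\alpha/4}$ of Theorem \ref{thm-reg-delta-2} for $r_2^n$. Your exponent bookkeeping ($\Phi_k\lesssim N^{-(2-\nu)}t_{k+1}^{1-\nu/2-(2-\nu)/r}$, worst case at $t_1\sim N^{-r}$ giving $N^{-r(1-\nu/2)}$, capped by the consistency order $2-\nu$) correctly reproduces $q_3=\min\{r(1-\nu/2),2-\nu\}$.
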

The convergence estimate is obtained by Theorem \ref{thm-conv-1} and Lemma \ref{lem-r-2-1D}.
\begin{theorem}\label{thm-conv-2}
If $a_0=0$, $a_1\in D((-\Delta)^{\gamma+\epsilon})$, $\gamma\in(\frac{1}{4},1]$ and $0<\epsilon \ll 1$, then
\[
\|\bar v^n\|_{L^2}+\|\nabla \bar u^n\|_{L^2} \leq C(N^{-q_1}+N^{-q_3})\|(-\Delta)^{\gamma+\epsilon}a_1\|_{L^2},~~1\le n\le N.
\]
\end{theorem}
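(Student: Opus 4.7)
The plan is to follow the energy-estimate strategy already used in the proof of Theorem \ref{thm-conv-1}, with the new truncation bounds of Lemma \ref{lem-r-2-1D} substituted at the final step. Since the structural form of the error system (\ref{error-system}) is identical to the two-dimensional case, most of the argument transports verbatim; the only novelty lies in the regularity input and in the size of $\|\nabla r_2^n\|_{L^2}$.

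First, I test the two equations of (\ref{error-system}) against $\bar v^n$ and $-\Delta \bar u^n$ respectively, add them, and invoke the discrete fractional monotonicity inequality (Lemma \ref{FD-ineq}) to obtain
\[
\tfrac{1}{2}\bar\partial_t^\nu\bigl(\|\bar v^n\|_{L^2}^2 + \|\nabla \bar u^n\|_{L^2}^2\bigr) \le \bigl(\|r_1^n\|_{L^2} + \|\nabla r_2^n\|_{L^2}\bigr)\bigl(\|\bar v^n\|_{L^2} + \|\nabla \bar u^n\|_{L^2}\bigr).
\]
Applying Lemma \ref{grownwall} with $\lambda_l\equiv 0$, and using the vanishing initial data $\bar v^0=\nabla\bar u^0=0$, reduces the desired estimate to controlling
\[
\max_{1\le k\le n}\sum_{j=1}^k P_{k-j}^{(k)}\bigl(\|r_1^j\|_{L^2}+\|\nabla r_2^j\|_{L^2}\bigr).
\]

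Next, I substitute the one-dimensional truncation bounds of Lemma \ref{lem-r-2-1D}, namely $\|r_1^j\|_{L^2}\le Ct_j^{-\nu}N^{-q_1}\|a_1\|_{L^2}$ and $\|\nabla r_2^j\|_{L^2}\le Ct_j^{-\nu}N^{-q_3}\|(-\Delta)^{\gamma+\epsilon}a_1\|_{L^2}$. Pulling the $N$-factors outside the sum and using the spectral bound $\|a_1\|_{L^2}\le C\|(-\Delta)^{\gamma+\epsilon}a_1\|_{L^2}$ yields
\[
\sum_{j=1}^k P_{k-j}^{(k)}\bigl(\|r_1^j\|_{L^2}+\|\nabla r_2^j\|_{L^2}\bigr) \le C\bigl(N^{-q_1}+N^{-q_3}\bigr)\|(-\Delta)^{\gamma+\epsilon}a_1\|_{L^2}\sum_{j=1}^k P_{k-j}^{(k)}t_j^{-\nu}.
\]
The residual sum is uniformly bounded in $k$ via the identity $t_j^{-\nu}=\Gamma(1-\nu)\omega_{1-\nu}(t_j)$ together with the second estimate in Lemma \ref{P}. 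Combining this with the uniform bound $E_\nu(4\Lambda t_n^\nu)\le E_\nu(4\Lambda T^\nu)$ from Lemma \ref{grownwall} delivers the advertised inequality.

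The main obstacle, such as it is, is not the energy argument itself but the verification of Lemma \ref{lem-r-2-1D} under the weaker regularity $\gamma>1/4$: in $\mathbb{R}^1$ one cannot mimic the two-dimensional $H^2$ control used to derive Lemma \ref{lem-r-2}, and so the bound on $\|\nabla r_2^n\|_{L^2}$ must instead be routed through the mixed space-time regularity $\|\partial_t^m(-\Delta)^{1/2}u\|_{L^2}$ supplied by Theorem \ref{thm-reg-delta-2}, which exploits the one-dimensional Sobolev embedding $H^{2(\gamma+\epsilon)}\hookrightarrow L^\infty$ available when $\gamma+\epsilon>1/4$. Once that intermediate lemma is in place, Theorem \ref{thm-conv-2} is essentially a one-line consequence of Theorem \ref{thm-conv-1} with $q_3$ replacing $q_2$ and the $\tau_1$ summand dropped.
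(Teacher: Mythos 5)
Your argument is correct and coincides with the paper's own route: the paper proves Theorem \ref{thm-conv-2} by simply rerunning the energy/Gr\"onwall argument of Theorem \ref{thm-conv-1} with the truncation bounds of Lemma \ref{lem-r-2-1D} substituted in, which is exactly what you do (and you supply the details of the $\sum_j P_{k-j}^{(k)} t_j^{-\nu}$ bound via Lemma \ref{P} more explicitly than the paper does). Your closing observation that the real content sits in Lemma \ref{lem-r-2-1D}, whose proof must be routed through $\|\partial_t^m(-\Delta)^{1/2}u\|_{L^2}$ from Theorem \ref{thm-reg-delta-2}, matches the paper's stated reasoning as well.
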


Combining the results of Theorems \ref{thm-semi-space} and \ref{thm-conv-2}, we obtain the convergence of the fully discrete solution $u_h^n$ at $t_n$, $n\geq1$. The conclusion is different from the one in Remark \ref{re-error-R2}. Specifically, the spatial discrete error is 
$O(h^2)$, $\alpha\in(1,\frac43]$, for $\forall t>0$ in Theorem \ref{thm-conv-3}.
\begin{theorem}\label{thm-conv-3}
If $a_0=0$, $a_1\in D((-\Delta)^{\gamma+\epsilon})$, $\gamma\in(\frac{1}{4},1)$ and $0<\epsilon \ll 1$, then
\[
\| u(t_n)-u_h^n\|_{L^2} \leq C(h^2+N^{-q_1}+N^{-q_3}),~~1<\alpha\leq \frac{1}{1-(\gamma+\epsilon)},
\]
\[
\| u(t_n)-u_h^n\|_{L^2} \leq C(t_n^{1-\alpha(1-\gamma-\epsilon)}h^2+N^{-q_1}+N^{-q_3}),~~ \frac{1}{1-(\gamma+\epsilon)}<\alpha<2.
\]
\end{theorem}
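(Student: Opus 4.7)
The plan is to use the classical triangle-inequality splitting
\[
u(t_n)-u_h^n = \bigl(u(t_n)-u_h(t_n)\bigr) + \bigl(u_h(t_n)-u_h^n\bigr),
\]
which decouples the spatial semi-discrete error from the temporal fully-discrete error. The first term is controlled directly by Theorem \ref{thm-semi-space}, while the second term is controlled by transferring the time-discretization analysis of Theorem \ref{thm-conv-2} from the continuous spatial setting to the finite-element setting.

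\textbf{Step 1 (spatial part).} From Theorem \ref{thm-semi-space} we have
\[
\|u(t_n)-u_h(t_n)\|_{L^2} \le C h^2 t_n^{1-\alpha(1-\gamma-\epsilon)} \|(-\Delta)^{\gamma+\epsilon} a_1\|_{L^2}.
\]
The dichotomy in the statement comes from the sign of the exponent. If $1<\alpha\le \frac{1}{1-(\gamma+\epsilon)}$, then $1-\alpha(1-\gamma-\epsilon)\ge 0$, hence the prefactor is bounded by $T^{1-\alpha(1-\gamma-\epsilon)}$ and we obtain the clean bound $Ch^2$. Otherwise $1-\alpha(1-\gamma-\epsilon)<0$, the prefactor blows up as $t_n\to 0^+$, and we must keep $t_n^{1-\alpha(1-\gamma-\epsilon)} h^2$ in the estimate.

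\textbf{Step 2 (temporal part).} For $\|u_h(t_n)-u_h^n\|_{L^2}$ I would re-run the argument of Theorem \ref{thm-conv-2}, replacing $-\Delta$ by the discrete Laplacian $-\Delta_h$ on the finite-element space $V_h$. The proof only uses self-adjointness and positivity of $-\Delta$, Lemma \ref{FD-ineq}, and the truncation-error bounds of Lemmas \ref{lem-r-1} and \ref{lem-r-2-1D}; each of these has an immediate analogue in $V_h$, because $-\Delta_h$ is self-adjoint positive and the regularity estimates for $u_h$ carry the same temporal growth as those for $u$, with constants depending on $\|(-\Delta)^{\gamma+\epsilon} a_1\|_{L^2}$ through the stability of the $L^2$-projection onto $V_h$. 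This yields
\[
\|u_h(t_n)-u_h^n\|_{L^2} \le C(N^{-q_1}+N^{-q_3})\|(-\Delta)^{\gamma+\epsilon} a_1\|_{L^2}.
\]

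\textbf{Step 3 (combine).} Summing the two bounds in each regime for $\alpha$ gives the two displayed estimates of the theorem. The main obstacle is really Step 2: one has to verify that the regularity bounds used in Lemmas \ref{lem-r-1} and \ref{lem-r-2-1D} (in particular the sharp singular behaviour $\|v''(t)\|_{L^2}\lesssim t^{-\nu-1}$ and $\|\partial_t^m(-\Delta)^{1/2} u\|_{L^2}$ from Theorem \ref{thm-reg-delta-2}) remain valid for $u_h$ uniformly in $h$. This follows from the standard observation that the spectral representation of $u_h$ in the FEM eigenbasis mirrors that of $u$, with the discrete eigenvalues bounded below by the continuous ones up to a harmless multiplicative constant, so the same Mittag-Leffler/Laplace-transform estimates apply. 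Once this is in place, the estimate of Theorem \ref{thm-conv-2} transfers verbatim and the proof is complete.
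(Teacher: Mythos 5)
Your proposal is correct and follows essentially the same route as the paper, which proves Theorem \ref{thm-conv-3} simply by ``combining the results of Theorems \ref{thm-semi-space} and \ref{thm-conv-2}'' (with the transfer of the temporal analysis to the finite-element setting left implicit via the framework of \cite{JinB2017sub_pod} cited in Remark \ref{re-error-R2}). Your Steps 1--3, including the sign dichotomy for the exponent $1-\alpha(1-\gamma-\epsilon)$ and the observation that the discrete Laplacian inherits the properties needed for the temporal error analysis, supply exactly the details the paper omits.
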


When $\frac{1}{1-(\gamma+\epsilon)}<\alpha<2$, $t_1^{1-\alpha(1-\gamma-\epsilon)}h^2$ may blow up as the time step parameter $r$ is large. The situation becomes better when $t_n$ away from 0. 
\begin{remark}
When $t_n>t^*$, $t^*$ is a fixed positive constant, one has
\[
\| u(t_n)-u_h^n\|_{L^2} \leq C(h^2+N^{-q_1}+N^{-q_3}).
\]
\end{remark}

Combining the above results, we have an interesting finding. First, let us go back to the problem of initial value reconstruction. The stability of the backward problem is as follows. There are some suggestions of the choices of observation time $T$ for $\alpha\in(1,\frac43]$ or $(\frac43,2)$.
\begin{theorem}(\cite{LiZhang2024})\label{th-sta-T}
Letting $\alpha\in(1,2)$, $\beta,\gamma\in [0,1]$ be such that $\gamma+\beta\neq0$, we suppose that the pair $(u,a_1)$ in the space $L^2 \left(0,T; H_0^1(\Omega) \cap H^2(\Omega) \right) \times D((-\Delta)^{\beta})$ is a solution of our backward problem, which corresponds to the measurement data $u(\cdot,T)$. If $\|(-\Delta)^{\beta} a_1\|_{L^2(\Omega)} \le M$ for some positive constant $M$, then the estimate
$$
\|a_1\|_{D((-\Delta)^{-\gamma})}\le  C_T M^{\frac{1-\gamma}{1+\beta}} \left\| u(\cdot, T)\right\|_{L^2(\Omega)}^{\frac{\beta+\gamma}{\beta+1}}.
$$
is valid provided that one of the following conditions hold: 
\begin{enumerate}[label = \roman*.]
    \item $\alpha\in(1,\frac43]$ and $T>0$;
    \item $\alpha\in(\frac43,2)$ and $T^\alpha \not\in \cup_{k=1}^\infty \{ \frac{t_1}{\lambda_k}, \cdots, \frac{t_P}{\lambda_k}\}$.
\end{enumerate}
Here $C_T>0$ is a constant that is independent of $a_1$ and $u$, but may depend on $\alpha,\beta,\gamma,T,M$, and $\Omega$.
\end{theorem}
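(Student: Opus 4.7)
The plan is to reduce the backward stability estimate to a lower bound on the forward map in the spectral basis of $-\Delta$. Let $\{(\lambda_k,\phi_k)\}_{k\ge 1}$ denote the Dirichlet eigenpairs with $0<\lambda_1\le\lambda_2\le\cdots\to\infty$, and expand $a_1=\sum_k a_1^k\phi_k$. Since $a_0=0$ and $f=0$, the Mittag-Leffler representation of the solution gives $u(\cdot,T)=\sum_k a_1^k\,m_k\,\phi_k$ with multiplier $m_k:=T\,E_{\alpha,2}(-\lambda_k T^\alpha)$, so the inversion $u(\cdot,T)\mapsto a_1$ amounts to dividing each Fourier coefficient by $m_k$, and everything reduces to controlling $|m_k|$ from below.

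The main obstacle, and the entire analytic content of the proof, is to establish a uniform lower bound of the form $|m_k|\ge c_T\lambda_k^{-1}$ for all $k\ge 1$. For large $k$ this follows from the standard asymptotic expansion
\begin{equation*}
E_{\alpha,2}(-z)=-\frac{1}{\Gamma(2-\alpha)\,z}+O(z^{-2}),\qquad z\to+\infty,
\end{equation*}
which yields $|m_k|\sim (\Gamma(2-\alpha)\,T^{\alpha-1}\lambda_k)^{-1}$ for $k\ge k_0(T,\alpha)$. For the remaining finitely many small indices one needs $m_k\neq 0$, which is precisely where the two cases split. In case (i), $\alpha\in(1,\tfrac43]$, the function $E_{\alpha,2}(-z)$ is strictly positive on $[0,\infty)$, so $m_k>0$ for every $k$ independently of $T$; this positivity is a known consequence of the Hankel contour representation of $E_{\alpha,2}$ together with a sign analysis valid only in the range $\alpha\le 4/3$. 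In case (ii), $\alpha\in(\tfrac43,2)$, the function $E_{\alpha,2}(-z)$ possesses finitely many positive real zeros $0<t_1<\cdots<t_P$; the hypothesis $T^\alpha\notin\bigcup_{k\ge 1}\{t_1/\lambda_k,\ldots,t_P/\lambda_k\}$ excludes exactly the possibility that some $\lambda_k T^\alpha$ coincides with a zero, and since only finitely many indices $k<k_0$ are at stake, the continuity of $E_{\alpha,2}$ furnishes a $T$-dependent constant $c_T>0$.

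Granting this lower bound, one has
\begin{equation*}
\|a_1\|_{D((-\Delta)^{-1})}^2=\sum_k\lambda_k^{-2}|a_1^k|^2\le c_T^{-2}\sum_k |m_k|^2|a_1^k|^2=c_T^{-2}\|u(\cdot,T)\|_{L^2(\Omega)}^2,
\end{equation*}
while by hypothesis $\|a_1\|_{D((-\Delta)^{\beta})}\le M$. Setting $\theta=(\beta+\gamma)/(\beta+1)\in[0,1]$, so that $-\gamma=-\theta+(1-\theta)\beta$, the Hilbert-scale interpolation inequality (a direct Hölder estimate in the eigenbasis) yields
\begin{equation*}
\|a_1\|_{D((-\Delta)^{-\gamma})}\le \|a_1\|_{D((-\Delta)^{-1})}^{\theta}\,\|a_1\|_{D((-\Delta)^{\beta})}^{1-\theta}\le C_T\,\|u(\cdot,T)\|_{L^2(\Omega)}^{(\beta+\gamma)/(\beta+1)}\,M^{(1-\gamma)/(\beta+1)},
\end{equation*}
which is the claimed estimate. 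The hypothesis $\gamma+\beta\neq 0$ is used only to guarantee $\theta>0$, so that the forward data genuinely appears. The only genuinely hard step is thus the uniform lower bound on $|m_k|$: the asymptotic regime is routine, but the exclusion of small-$k$ zeros in case (ii) and the positivity statement in case (i) rest on the zero-location theory of the Mittag-Leffler function in the range $\alpha\in(1,2)$.
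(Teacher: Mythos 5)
This theorem is imported verbatim from \cite{LiZhang2024} and the present paper contains no proof of it, so there is nothing internal to compare against; I can only assess your argument on its own terms. Your route --- expand in the Dirichlet eigenbasis, write $u(\cdot,T)=\sum_k a_1^k\,T E_{\alpha,2}(-\lambda_k T^\alpha)\varphi_k$ consistently with \eqref{eq-u-wave}, prove the multiplier lower bound $|m_k|\ge c_T\lambda_k^{-1}$, and then interpolate between $D((-\Delta)^{-1})$ and $D((-\Delta)^\beta)$ with $\theta=(\beta+\gamma)/(\beta+1)$ --- is the standard proof of this type of conditional stability estimate, and the algebra checks out: $-\gamma=-\theta+(1-\theta)\beta$ and $1-\theta=(1-\gamma)/(\beta+1)$ reproduce the stated exponents exactly. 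Two caveats. First, a harmless sign slip: the asymptotic expansion is $E_{\alpha,2}(-z)=\tfrac{1}{\Gamma(2-\alpha)z}+O(z^{-2})$ as $z\to+\infty$ (leading coefficient positive for $\alpha\in(1,2)$), not $-\tfrac{1}{\Gamma(2-\alpha)z}$; since you only use $|m_k|$ this does not affect anything downstream. Second, and more substantively, the two facts on which the case split rests --- that $E_{\alpha,2}(-x)$ has no positive real zeros when $\alpha\in(1,\tfrac43]$, and has only finitely many positive real zeros $t_1<\cdots<t_P$ when $\alpha\in(\tfrac43,2)$ --- are asserted rather than proven. These are precisely the nontrivial content of \cite{LiZhang2024} (note that the $t_j$ appearing in condition (ii) are not even defined in the present paper; your identification of them as the positive real zeros of $E_{\alpha,2}(-\cdot)$ is the correct reading). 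So what you have is a correct and complete reduction of the stability estimate to the zero-location dichotomy for $E_{\alpha,2}$, but not a self-contained proof; if you intend it as more than a sketch you must either prove the $\alpha=\tfrac43$ threshold or cite it explicitly.
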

Then, the choice of $T$ also plays an important role in the stochastic convergence analysis of the regularization method based on the measurements of the scattering points. Naturally, the question arises of whether the choice of $T$ has an effect on the numerical inversion. Remark \ref{Re-pre-numfram} gives a positive conclusion on this. Specifically, the convergence of the numerical method is guaranteed and optimally achieved under a suitable time grid. Therefore, we call it the preserving numerical framework. 
\begin{remark}\label{Re-pre-numfram}(Preserving numerical framework)
If $a_0=0$, $a_1\in D((-\Delta)^{\gamma+\epsilon})$, $\gamma\in(\frac{1}{4},1)$, $0<\epsilon\ll 1$ and $r=1+\frac{2}{2-\alpha}$, then 
\begin{enumerate}[label = \roman*.]
    \item $\|u(t_n)-u_h^n\|_{L^2}\leq C(h^2+N^{-(2-\frac{\alpha}{2})})\|(-\Delta)^{\gamma+\epsilon}a_1\|_{L^2}$, $\alpha\in(1,\frac43]$ and $t_n>0$;
    \item $\|u(t_n)-u_h^n\|_{L^2}\leq C(h^2+N^{-(2-\frac{\alpha}{2})})\|(-\Delta)^{\gamma+\epsilon}a_1\|_{L^2}$, $\alpha\in(\frac43,2)$ and $t_n>t^*$, $t^*$ is a fixed positive constant.
\end{enumerate}
\end{remark}

\section{Scattered point measurement-based regularization}\label{sec-regularization}
For stating the Tikhonov regularization method based on scattered point measurement, we collect a set of  scattered points $ \{ x_i \}_{i = 1}^n$ which are such that $ x_i \neq x_j $ for $ i \neq j $ and are quasi-uniformly distributed in $ \Omega $, that is, there exists a positive constant $ B $ such that $ d_{\max} \le B d_{\min} $ , where $ d_{\max}>0$ and $ d_{\min}>0$ are defined by
$$ 
d_{\max} = \sup_{x \in \Omega} \inf_{1 \leq i \leq n} | x - x_i |,\quad
d_{\min} = \inf_{1 \leq i \neq j \leq n} | x_i - x_j | .
$$
Furthermore, for any $u,v\in C(\overline\Omega)$ and $y\in\mathbb R^n$, we define
$$
(y,v)_n := \frac1n \sum_{i=1}^n y_i v(x_i),\quad (u,v)_n:=\frac1n \sum_{i=1}^n u(x_i)v(x_i),
$$
and the discrete semi-norm 
$$
\|u\|_n := \left(\sum_{i=1}^n \frac{u^2(x_i)}{n} \right)^{\frac12},\quad u\in C(\overline \Omega).
$$

In view of the stability results from the above sections, it follows that the forward operator $S$ is bounded and one-to-one from $X:=D((-\Delta)^\beta)$ to $H^2(\Omega)$. Moreover, let $a^*\in X $ be the unknown initial value of the problem \eqref{eq-gov}. We assume that the measurement data contains noise and is presented in the following form:
\begin{equation}\label{ob-noise0}
m_i = (Sa^*)(x_i) + e_i, \quad i = 1,2,...,n,
\end{equation}
where $\{e_i\}_{i=1}^n$ denote a sequence of random variables that are independent and identically distributed over the probability space. The expectation is such that $\mathbb{E}[e_i] = 0$, and the variances are bounded by $\sigma^2$, that is, $\mathbb{E}[e_i^2] \leq \sigma^2$. 

We denote $\mathbf{x}:=(x_1,x_2,\cdots,x_n)$, $\mathbf{m} = (m_1, m_2, ..., m_n)^T$, and $\mathbf{e}:=(e_1,e_2,\cdots,e_n)$. Then the above term \eqref{ob-noise0} can be rephrased as the vector form:
\begin{equation*}
\mathbf{m} = (Sa^*)(\mathbf{x}) + \mathbf{e}.
\end{equation*}

Let $S_{\tau,h}$ be the fully discrete approximation of the operator $S$. The forward problem is solved by the numerical scheme (\ref{num-scheme}) in finite element space $X_h$. We seek a numerical solution, denoted as $ a_n^*$, for the unknown initial value $a^*$, utilizing the Tikhonov regularization form:
\begin{equation}\label{Tik-vec}
\arg\min_{a \in X_h} \| (S_{\tau,h}a)(\mathbf{x}) - \mathbf{m}\|_n^2 + \rho_n \| a \|^2_{X},
\end{equation}
where $X_h\subset D((-\Delta)^\beta)$ with $\beta\in \mathbb R$ and $ \rho_n > 0 $ is called a regularization parameter.

Here we present our main theorem on the stochastic convergence of the SFOR framework. Estimates $\|S_{\tau,h}a^*-Sa^*\|_{L^2}^2\leq e_S:=O(h^4+N^{-(4-\alpha)})$ have been discussed in Remarks \ref{re-error-R2} and \ref{Re-pre-numfram}. 
The feasibility of the SFOR method suggests $\beta\in(\frac{d}{4},1)$. The observation time $T$ satisfies the requirements in Theorem \ref{th-sta-T}.

\begin{theorem}\label{thm-conv-SFOR}
Let $a_n^* \in X_h$ with dimensions $N_h$ be the unique solution of our Tikhonov regularization form \eqref{Tik-vec}. Then there exist constants $ \rho_0 > 0 $ and $ C > 0 $ such that the following estimates
\[ 
\mathbb{E} \bigl[ \| S_{\tau,h} a_n^* - S a^* \|^2_n \bigr] \leq C(\rho_n+e_S)\| a^* \|^2_X + C\left(1+\frac{e_S}{\rho_n}+\frac{N_he_S}{\rho_n^{1-\frac{d}4/(1+\beta)}}\right)\frac{\sigma^2}{n \rho_n^{\frac{d}4/(1+\beta)}}, 
\]
and
\[ 
\mathbb{E} \bigl[ \| a_n^* - a^* \|^2_{X} \bigr] \leq C\frac{\rho_n+e_S}{\rho_n}\| a^* \|^2_X +C\left(1+\frac{e_S}{\rho_n}+\frac{N_he_S}{\rho_n^{1-\frac{d}4/(1+\beta)}}\right)\frac{\sigma^2}{n \rho_n^{1+\frac{d}4/(1+\beta)}}. 
\]
are valid for any $0< \rho_n \leq \rho_0 $. Here, the constant $C$ is independent of $n,a^*,a_n^*$.
\end{theorem}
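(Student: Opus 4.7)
The strategy is the classical bias--variance decomposition for Tikhonov regularization, adapted to the scattered sampling inner product $(\cdot,\cdot)_n$, the numerical forward operator $S_{\tau,h}$, and random i.i.d.\ noise. Starting from the optimality of $a_n^*$ in \eqref{Tik-vec}, I would test against a discrete counterpart $P_h a^*\in X_h$ of the exact initial value $a^*$ (an $X$-stable projection such as a Ritz projection) and substitute $\mathbf{m}=(Sa^*)(\mathbf{x})+\mathbf{e}$. Expanding the square and rearranging yields the basic inequality
\[
\|S_{\tau,h}a_n^*-Sa^*\|_n^2+\rho_n\|a_n^*\|_X^2
\le \|S_{\tau,h}P_h a^*-Sa^*\|_n^2+\rho_n\|P_h a^*\|_X^2
+2\bigl(S_{\tau,h}(a_n^*-P_h a^*),\mathbf{e}\bigr)_n.
\]
Using the consistency bound $\|S_{\tau,h}a-Sa\|_{L^2}^2\le e_S\|a\|_X^2$ from Remark \ref{Re-pre-numfram}, the quadrature equivalence $\|\cdot\|_n\lesssim\|\cdot\|_{L^2}$ on $X_h$ (which follows from quasi-uniformity of the scattered points combined with an inverse inequality on the finite element space), and the $X$-stability of $P_h$, the deterministic portion of the right-hand side is reduced to $C(\rho_n+e_S)\|a^*\|_X^2$---this is the bias contribution in the stated estimate.

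The core of the argument is the control of the stochastic cross term. I would diagonalize in the Laplacian eigenbasis $\{e_k\}$ with eigenvalues $\lambda_k\sim k^{2/d}$ (Weyl's law), on which $S$ acts by multiplication by a Mittag-Leffler symbol of size $\lambda_k^{-1}$. In the $X$-orthonormal basis $\phi_k:=\lambda_k^{-\beta}e_k$, this gives $\|S\phi_k\|_{L^2}^2\sim \lambda_k^{-2(1+\beta)}\sim k^{-4(1+\beta)/d}$, so the number of modes with $\|S\phi_k\|_{L^2}^2\gtrsim\rho_n$ is at most $C\rho_n^{-d/(4(1+\beta))}$---precisely the exponent appearing in the theorem. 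Coupled with the variance identity $\mathbb{E}\bigl[(S_{\tau,h}\phi_k,\mathbf{e})_n^2\bigr]\le \sigma^2\|S_{\tau,h}\phi_k\|_n^2/n$ and Young's inequality chosen so that a fraction of $\rho_n\|a_n^*-P_h a^*\|_X^2$ is absorbed back into the left-hand side, the cross term contributes at most $C\sigma^2\bigl(n\rho_n^{d/(4(1+\beta))}\bigr)^{-1}$ in expectation. The two correction factors $e_S/\rho_n$ and $N_h e_S/\rho_n^{1-d/(4(1+\beta))}$ arise from propagating the discretization error $\|(S_{\tau,h}-S)\phi_k\|_{L^2}^2\le e_S$ through the spectral truncation, together with the $\|\cdot\|_n$ versus $\|\cdot\|_{L^2}$ quadrature gap on the $N_h$-dimensional space $X_h$.

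The $X$-norm bound is extracted from the same basic inequality: since its left side already carries $\rho_n\|a_n^*\|_X^2$, applying $\|a_n^*-a^*\|_X^2\le 2\|a_n^*\|_X^2+2\|a^*\|_X^2$ and dividing by $\rho_n$ converts each right-hand side term into its $X$-norm counterpart and produces the extra factor $\rho_n^{-1}$ in both the bias and the variance, matching the second asserted inequality. The smallness condition $\rho_n\le \rho_0$ is used to keep the Young inequality constants uniform. The main obstacle I anticipate is the spectral trace step: one must rigorously justify the quadrature equivalence $\|u\|_n^2\sim \|u\|_{L^2}^2$ on $X_h$ (via quasi-uniformity of the scattered points and the finite element inverse estimate), chain it with the Mittag-Leffler symbol asymptotics of $S$ and the Weyl-type eigenvalue asymptotics of $-\Delta$, and track the propagation of $e_S$ mode by mode through the truncation. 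Once these ingredients are assembled, the remainder of the argument is algebraic bookkeeping.
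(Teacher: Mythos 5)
Your outline reconstructs, essentially from scratch, the stochastic convergence machinery that the paper does not re-derive: the paper's own proof of Theorem \ref{thm-conv-SFOR} consists of a single verification step --- showing $\|S_{\tau,h}a^*-Sa^*\|_n\le C(h^2+N^{-(2-\alpha/2)})\|a^*\|_X$ via Remarks \ref{re-error-R2} and \ref{Re-pre-numfram}, Lemma 3.10 of \cite{chen2022stochastic}, and the $H^2$-smoothing bound $\|Sa^*\|_{H^2}\le C\|a^*\|_{L^2}$ from \cite{LiZhang2024} --- followed by an appeal to Theorem 2.10 of \cite{chen2022stochastic} with $e(h)$ replaced by $e_S$. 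Your bias--variance decomposition, the eigenvalue-counting argument giving $\#\{k:\sigma_k^2\gtrsim\rho_n\}\le C\rho_n^{-\frac{d}{4(1+\beta)}}$ from $\lambda_k\sim k^{2/d}$ and the Mittag--Leffler symbol bound $|TE_{\alpha,2}(-\lambda_kT^\alpha)|\lesssim\lambda_k^{-1}$, and the extraction of the $X$-norm bound by dividing the basic inequality by $\rho_n$ are all faithful to how that cited theorem is actually proved, so the underlying mathematics coincides; what you lose is brevity, and what you gain is a self-contained argument.

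Two points in your sketch are genuinely soft. First, you justify $\|\cdot\|_n\lesssim\|\cdot\|_{L^2}$ by ``quasi-uniformity plus an inverse inequality on the finite element space,'' but the function you must sample is $S_{\tau,h}P_ha^*-Sa^*$ (and later $S_{\tau,h}a^*-Sa^*$), and $Sa^*$ does \emph{not} lie in any finite-dimensional space, so no inverse inequality applies to it; point evaluation of an $L^2$ function is not even defined. The correct route is the Sobolev sampling inequality of Lemma \ref{lem-u-un} with $k=2>d/2$, which costs an extra term $n^{-4/d}\|S_{\tau,h}a^*-Sa^*\|^2_{H^2}$ and therefore requires the $H^2$-regularity of $Sa^*$ (the smoothing property of the forward map); this is exactly why the paper's verification passes through $\|Sa^*\|_{H^2}\le C\|a^*\|_{L^2}$. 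Second, the correction factors $e_S/\rho_n$ and $N_he_S/\rho_n^{1-\frac{d}4/(1+\beta)}$ come from perturbing the spectral counting function of $S$ to that of $S_{\tau,h}$ under the \emph{empirical} inner product $(\cdot,\cdot)_n$ on the $N_h$-dimensional space $X_h$; you correctly name this as the obstacle but do not carry it out, and it is the only part of the argument that is not ``algebraic bookkeeping.'' Without it the stated variance bound, with its precise dependence on $N_h$ and $e_S$, is asserted rather than proved.
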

\begin{proof}
When the parameter $r$ of graded mesh is taken $1+\frac{2}{2-\alpha}$, it holds that
\begin{align*}
\|S_{\tau,h}a^*-Sa^*\|_{L^2}
\leq C(h^2+N^{-(2-\frac{\alpha}{2})})\|a^*\|_{X},
\end{align*}
details can be found in Remarks \ref{re-error-R2} and \ref{Re-pre-numfram}.
Applying the Lemma 3.10 in \cite{chen2022stochastic} and Lemma 2.4 in \cite{LiZhang2024}, we have
\begin{align*}
\|S_{\tau,h}a^*-Sa^*\|_n
&\leq C\|S_{\tau,h}a^*-Sa^*\|_{L^2}+Ch^2\|Sa^*\|_{H^2}\\
&\leq C\|S_{\tau,h}a^*-Sa^*\|_{L^2}+Ch^2\|a^*\|_{L^2}\\
&\leq C\|S_{\tau,h}a^*-Sa^*\|_{L^2}+Ch^2\|(-\Delta)^{\gamma}a^*\|_{L^2}\\
&\leq C(h^2+N^{-(2-\frac{\alpha}{2})})\|a^*\|_{X}.
\end{align*}
Replace $e(h)$ therein by $e_S$ in Assumption 2.3, the proof is complete following the idea in Theorem 2.10 \cite{chen2022stochastic}.
\end{proof}

\begin{remark}\label{core}
For Theorem \ref{thm-conv-SFOR}, if $e_S\leq C\rho_n$ and $N_he_S\leq C\rho_n^{1-\frac{d}4/(1+\beta)}$, we have
\[ 
\mathbb{E} \bigl[ \| S_{\tau,h} a_n^* - S a^* \|^2_n \bigr] \leq C\rho_n\| a^* \|^2_X + \frac{C\sigma^2}{n \rho_n^{\frac{d}4/(1+\beta)}}, 
\]
and
\[ 
\mathbb{E} \bigl[ \| a_n^* - a^* \|^2_{X} \bigr] \leq C \|a^* \|^2_X +\frac{C\sigma^2}{n \rho_n^{1+\frac{d}4/(1+\beta)}}. 
\]
\end{remark}

Theorem \ref{thm-E-condi} shows that the error estimate is based on key parameters
such as the noise level, the regularization parameter, and the number of observation points.
Then, the optimal regularization parameter is given in Remark \ref{remark-1}.

\begin{lemma}\cite[Theorems 3.3 and 3.4]{utreras1988convergence}
\label{lem-u-un}
There exists a constant  $C>0$  such that for all  $  u \in H^k(\Omega)$ with $k>\frac{d}2$ , the following estimates are valid:
\begin{equation}\label{esti-u<un}
\begin{aligned}
\| u \|^2_{L^2(\Omega)} \leq& C \left( \| u \|^2_n + n^{-\frac{2k}{d}} \| u \|^2_{H^k(\Omega)} \right),\\
\| u \|^2_n \leq& C \left( \| u \|^2_{L^2(\Omega)} + n^{-\frac{2k}{d}} \| u \|^2_{H^k(\Omega)} \right). 
\end{aligned}
\end{equation}
\end{lemma}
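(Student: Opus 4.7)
The plan is to reduce both bounds to a single local sampling inequality of the form $\sup_{x\in\Omega_i}|u(x)-u(x_i)|\le C h_i^{k-d/2}\|u\|_{H^k(\Omega_i)}$ on a Voronoi-type partition $\{\Omega_i\}_{i=1}^n$ of $\Omega$ adapted to the scattered points, and then to recover both norms by an integrate-and-sum argument over the cells. The hypothesis $d_{\max}\le B d_{\min}$ will enter twice: to guarantee that the partition around each $x_i$ has diameter $h_i$ of order $n^{-1/d}$ and measure $|\Omega_i|$ of order $n^{-1}$, and to guarantee uniform shape regularity so that the constants in the local inequality do not blow up with $i$ or $n$.

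First I would construct the Voronoi partition $\{\Omega_i\}$, observing that each cell contains a ball of radius $d_{\min}/2$ about $x_i$ and sits inside a ball of radius $d_{\max}$; quasi-uniformity, together with the boundedness of $\Omega$, then pins down $h_i\sim n^{-1/d}$ and $|\Omega_i|\sim n^{-1}$. Next I would transplant $\Omega_i$ to a reference cell $\hat\Omega_i$ of unit diameter via $\hat x=(x-x_i)/h_i$ and apply, on the reference cell, the Sobolev embedding $H^k\hookrightarrow C^0$ (valid because $k>d/2$) to the function $\hat u - \hat u(0)$, combined with a Bramble--Hilbert/Poincar\'e step to absorb the zero-order term. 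Scaling back in $x$ then gives the cell-wise pointwise bound, with Sobolev seminorms that sum additively to the global $\|u\|_{H^k(\Omega)}$.

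With the sampling inequality in hand, both estimates follow from the triangle splitting $u(x)^2\le 2u(x_i)^2+2(u(x)-u(x_i))^2$ integrated over $\Omega_i$ and summed: this produces $\|u\|_{L^2(\Omega)}^2\le 2\sum_i|\Omega_i|u(x_i)^2+C\sum_i h_i^{2k}\|u\|_{H^k(\Omega_i)}^2$, whose first term collapses to $C\|u\|_n^2$ because $|\Omega_i|\sim n^{-1}$, and whose second collapses to $Cn^{-2k/d}\|u\|_{H^k(\Omega)}^2$ because $h_i\sim n^{-1/d}$ uniformly in $i$. The reverse inequality follows by swapping $u(x)$ and $u(x_i)$ in the same splitting, averaging over $\Omega_i$ rather than integrating, and using the opposite lower bound $|\Omega_i|\ge c n^{-1}$.

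The step I expect to be most delicate is proving the reference-cell bound with a constant that is independent of the Voronoi configuration. This requires a shape-regularity lemma asserting that Voronoi cells of quasi-uniformly distributed points lie in a compact family modulo dilation, for instance that they satisfy a uniform cone condition depending only on $B$, $d$, and $\Omega$; once that is available, the Sobolev and Bramble--Hilbert constants on the reference cells can be absorbed into a single $C$, and the remainder of the argument is bookkeeping in powers of $n^{-1/d}$.
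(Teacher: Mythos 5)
The paper offers no proof of this lemma --- it is imported verbatim from Utreras's Theorems 3.3 and 3.4 --- so your attempt has to stand on its own. Your architecture (Voronoi cells of diameter $h\sim n^{-1/d}$ and measure $\sim n^{-1}$, a local estimate per cell obtained by scaling to a reference configuration, then summation using disjointness of the cells) is the right skeleton, and the second estimate does go through this way: the correctly scaled Sobolev embedding for $k>\frac d2$ reads $|u(x_i)|^2\le C\bigl(h^{-d}\|u\|^2_{L^2(B(x_i,h))}+h^{2k-d}|u|^2_{H^k(B(x_i,h))}\bigr)$, and summing over $i$ with $h\sim n^{-1/d}$ and the bounded overlap guaranteed by the separation $d_{\min}\gtrsim n^{-1/d}$ yields $\|u\|_n^2\le C(\|u\|^2_{L^2(\Omega)}+n^{-2k/d}\|u\|^2_{H^k(\Omega)})$.

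The first estimate, however, has a genuine gap: the local inequality you use as the engine, $\sup_{x\in\Omega_i}|u(x)-u(x_i)|\le Ch_i^{k-d/2}\|u\|_{H^k(\Omega_i)}$, is false for $k\ge2$. Test it on $u(x)=\ell(x-x_i)$ with $\ell$ linear and $|\nabla\ell|=1$: the left side is of order $h_i$ (the cell contains a ball of radius $d_{\min}/2\sim h_i$), while $\|u\|_{H^k(\Omega_i)}\sim h_i^{d/2}$ (the $H^1$ seminorm dominates), so the right side is of order $h_i^{k}$, and $h_i\le Ch_i^{k}$ fails as $h_i\to0$. The structural reason is that subtracting the single value $u(x_i)$ annihilates only constants, so the Bramble--Hilbert step controls the reference-cell functional by $\sum_{m=1}^{k}|\hat u|_{H^m}$ rather than by $|\hat u|_{H^k}$ alone; after scaling back, the $m=1$ term dominates and your summation delivers $n^{-2/d}$ in place of $n^{-2k/d}$. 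That loss is fatal for the paper, which applies the lemma with $k=2$ (hence needs $n^{-4/d}$) in Theorem \ref{thm-E-condi}. To recover the full order one must compare $u$ on each cell not with $u(x_i)$ but with a local polynomial of degree $k-1$ reproduced from the data values at the $O(1)$ quasi-uniform points lying in a fixed multiple $B(x_i,Mh)$ of the cell: a norming-set argument shows that an $h$-net of a ball of radius $Mh$ is unisolvent for $P_{k-1}$ with uniformly bounded reproduction weights once $M=M(B,k,d)$ is large enough, the resulting functional annihilates $P_{k-1}$, Bramble--Hilbert then gives the $|{\cdot}|_{H^k}$ seminorm alone, and your bookkeeping produces the stated $n^{-2k/d}$. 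This local polynomial reproduction is the mechanism of the cited result and of the standard sampling inequalities for scattered data, and it is the one idea missing from your proposal.
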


\begin{theorem}\label{thm-E-condi}
Suppose that the unknown initial value $a^*\in X=D((-\Delta)^\beta)$ with $\beta\in(\frac{d}{4},1]$, and for the regularization parameter $\rho_n$, if $e_S\leq C\rho_n$ and $N_he_S\leq C\rho_n^{1-\frac{d}4/(1+\beta)}$, then there exists a constant $C=C(\beta,\rho_n,T,\Omega)$ such that
\[
\begin{aligned}
\mathbb{E} \bigl[ \| a_n^* - a^* \|^{2+\frac{2}{\beta}}_{L^2(\Omega)} \bigr] 
\le & C\left[ \| a^* \|^2_{X} + \frac{\sigma^2}{n \rho_n^{1 + \frac{d}4/(1+\beta)}}\right]^{\frac2\beta} \frac1{n^{4(1+\beta)/d}} \\
&+ C\rho_n\left[ \| a^* \|^2_{X} + \frac{\sigma^2}{n \rho_n^{1 + \frac{d}4/(1+\beta)}}\right]^{\frac1\beta+1}\\
&+C\left[ \| a^* \|^2_{X} + \frac{\sigma^2}{n \rho_n^{1 + \frac{d}4/(1+\beta)}}\right]^{\frac1\beta}\left(1+n^{-\frac4d}h^{-2}\right)e_S\|a_n^*\|^2_X.
\end{aligned}
\]
\end{theorem}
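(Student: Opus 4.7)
The plan is to combine the conditional stability estimate of Theorem \ref{th-sta-T} with the stochastic moment bounds from Remark \ref{core} and the sampling inequality of Lemma \ref{lem-u-un}. The observation time $T$ is chosen so that Theorem \ref{th-sta-T} applies, so first I would invoke that theorem with $\gamma=0$ for the difference $a_n^*-a^*\in D((-\Delta)^\beta)$ to obtain the pathwise Hölder-type inequality
\[
\|a_n^*-a^*\|_{L^2}^{2+2/\beta}\le C\,\|a_n^*-a^*\|_X^{2/\beta}\,\|Sa_n^*-Sa^*\|_{L^2}^{2}.
\]
This is the workhorse estimate; the rest of the argument is devoted to bounding the two factors on the right-hand side and then taking expectations.

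Second, I would control the forward residual by inserting $S_{\tau,h}a_n^*$, splitting
\[
\|Sa_n^*-Sa^*\|_{L^2}^{2}\le 2\|(S-S_{\tau,h})a_n^*\|_{L^2}^{2}+2\|S_{\tau,h}a_n^*-Sa^*\|_{L^2}^{2}.
\]
The first summand is immediately $\le 2e_S\|a_n^*\|_X^2$. For the second, the direction $\|u\|_{L^2}^2\lesssim \|u\|_n^2+n^{-2k/d}\|u\|_{H^k}^2$ of Lemma \ref{lem-u-un} will be applied with $k=2(1+\beta)$ to the smooth part $S(a_n^*-a^*)$, exploiting the elliptic regularity $\|S(a_n^*-a^*)\|_{H^{2(1+\beta)}}\le C\|a_n^*-a^*\|_X$, and with $k=2$ (and a finite-element inverse estimate on $X_h$, which gives the $h^{-2}$ factor) for the discretization defect $(S-S_{\tau,h})a_n^*$. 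This produces
\[
\|Sa_n^*-Sa^*\|_{L^2}^{2}\le C\|S_{\tau,h}a_n^*-Sa^*\|_n^{2}+Cn^{-4(1+\beta)/d}\|a_n^*-a^*\|_X^{2}+C\bigl(1+n^{-4/d}h^{-2}\bigr)e_S\|a_n^*\|_X^{2}.
\]

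Third, I would substitute this estimate into the inequality from Step 1 and take expectations. Under the standing hypotheses $e_S\le C\rho_n$ and $N_h e_S\le C\rho_n^{1-(d/4)/(1+\beta)}$, Remark \ref{core} bounds $\mathbb{E}[\|S_{\tau,h}a_n^*-Sa^*\|_n^2]$ by $C\rho_n[\|a^*\|_X^2+\sigma^2/(n\rho_n^{1+(d/4)/(1+\beta)})]$ and $\mathbb{E}[\|a_n^*-a^*\|_X^2]$ by $C[\|a^*\|_X^2+\sigma^2/(n\rho_n^{1+(d/4)/(1+\beta)})]$. The three terms in the final estimate then correspond, respectively, to the $n^{-4(1+\beta)/d}$ sampling contribution, the $\rho_n$-weighted regularization-bias contribution, and the residual discretization contribution that retains the pathwise quantity $\|a_n^*\|_X^2$.

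The main obstacle is passing from the pointwise product $\|a_n^*-a^*\|_X^{2/\beta}\!\cdot Z$ to its expectation when only the second moment $\mathbb{E}[\|a_n^*-a^*\|_X^2]$ is controlled; since $\beta\le 1$, the exponent $2/\beta\ge 2$ makes a naive Cauchy--Schwarz require fourth noise moments that are not assumed. My plan is to leverage the pathwise minimizing property of $a_n^*$ in the Tikhonov functional \eqref{Tik-vec} to extract a deterministic factor of the form $\bigl[\|a^*\|_X^2+\sigma^2/(n\rho_n^{1+(d/4)/(1+\beta)})\bigr]^{1/\beta-1}$ from $\|a_n^*-a^*\|_X^{2/\beta}$, leaving a single factor $\|a_n^*-a^*\|_X^2$ under the expectation to which Remark \ref{core} directly applies. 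This mechanism is what generates the bracketed $[\,\cdot\,]^{1/\beta}$ and $[\,\cdot\,]^{1/\beta+1}$ prefactors in the statement and it parallels the argument used in Theorem 2.10 of \cite{chen2022stochastic}; the rest reduces to routine bookkeeping of the exponents on $\rho_n$, $n$, and $h$.
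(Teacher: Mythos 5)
Your proposal follows the same skeleton as the paper's proof: conditional stability (Theorem \ref{th-sta-T} with $\gamma=0$) gives the interpolation inequality $\|a_n^*-a^*\|_{L^2}^{2+2/\beta}\le C M_n^{2/\beta}\|Sa_n^*-Sa^*\|_{L^2}^2$ with $M_n=\|(-\Delta)^\beta(a_n^*-a^*)\|_{L^2}$, the sampling inequality of Lemma \ref{lem-u-un} converts the $L^2$ norm into the empirical norm plus a Sobolev remainder, the triangle inequality through $S_{\tau,h}$ produces the $e_S\|a_n^*\|_X^2$ and $h^{-2}$ contributions, and Remark \ref{core} supplies the moment bounds. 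The one place you genuinely diverge is the origin of the $n^{-4(1+\beta)/d}$ term: you apply Lemma \ref{lem-u-un} with $k=2(1+\beta)$ to $S(a_n^*-a^*)$ via the smoothing $S:D((-\Delta)^\beta)\to H^{2(1+\beta)}$, whereas the paper uses only $k=2$ together with $\|Sw\|_{H^2}\le C\|w\|_{L^2}$ and then converts the resulting $n^{-4/d}\|a_n^*-a^*\|_{L^2}^2$ into $n^{-4(1+\beta)/d}$ by a Young inequality with exponents $\bigl(\beta+1,\tfrac{\beta+1}{\beta}\bigr)$ followed by absorption of $\varepsilon\,\mathbb{E}\bigl[\|a_n^*-a^*\|^{2+2/\beta}_{L^2}\bigr]$ with $\varepsilon=1/(2C\mathbb{E}[M_n^{2/\beta}])$. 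Your route is more direct and avoids the absorption step, but it pairs $n^{-4(1+\beta)/d}$ with $\mathbb{E}[M_n^{2/\beta}]\cdot\mathbb{E}[\|a_n^*-a^*\|_X^2]\lesssim[\,\cdot\,]^{1+1/\beta}$ rather than the stated $[\,\cdot\,]^{2/\beta}$ prefactor, which the paper obtains as $(\mathbb{E}[M_n^{2/\beta}])^2$; for $\beta<1$ these exponents differ, so you would either restate the first term accordingly or fall back on the paper's absorption argument to land exactly on the claimed bound. Finally, you correctly single out the expectation of the product $M_n^{2/\beta}\cdot(\text{forward error})$ under only second-moment assumptions as the real difficulty: the paper factors this expectation into a product of expectations and bounds $\mathbb{E}[M_n^{2/\beta}]$ by $(\mathbb{E}[M_n^2])^{1/\beta}$ without comment (neither step is automatic for $\beta<1$ or for correlated factors, and the paper defers to \cite{chen2022stochastic} for the details), so your plan to extract a deterministic factor from the minimizing property addresses a point the paper itself leaves implicit rather than contradicting its argument.
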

\begin{proof}
In view of the estimate \eqref{esti-u<un} in Lemma \ref{lem-u-un}, we see that
\[ 
 \| S_{\tau,h} a_n^* - S a^* \|^2_n  + n^{-\frac4d}  \| S_{\tau,h} a_n^* - S a^* \|^2_{H^2(\Omega)}  \geq C  \| S_{\tau,h} a_n^* - S a^* \|^2_{L^2(\Omega)}.
\]
For $\| S_{\tau,h} a_n^* - S a^* \|^2_{L^2(\Omega)}$, one has
\begin{align*}
\| S_{\tau,h} a_n^* - S a^* \|^2_{L^2(\Omega)}&\ge C\|Sa_n^* - Sa^*\|^2_{L^2(\Omega)}-C\|S_{\tau,h} a_n^* - S a_n^*\|^2_{L^2(\Omega)}\\
&\ge C\|Sa_n^* - Sa^*\|^2_{L^2(\Omega)}-Ce_S\|a_n^*\|^2_X.
\end{align*}
Moreover, from the second estimate in Remark \ref{core} with $X=D((-\Delta)^{\beta})$, $\beta>0$, combined with Theorem \ref{th-sta-T} with $\gamma=0$ further implies that
\[ 
 \| S_{\tau,h} a_n^* - S a^* \|^2_n  + n^{-\frac4d} \| S_{\tau,h} a_n^* - S a^* \|^2_{H^2(\Omega)}+Ce_S\|a_n^*\|^2_X  \geq C  M_n^{-\frac{2}{\beta}}\| a_n^* - a^* \|^{2+\frac{2}{\beta}}_{L^2(\Omega)} .
\]
Here $M_n:= \|(-\Delta)^\beta (a_n^* - a^*)\|_{L^2(\Omega)}$. On the other hand, Lemma 2.4 in \cite{LiZhang2024} implies that
\begin{align*}
 \| S_{\tau,h} a_n^* - S a^* \|^2_{H^2(\Omega)} &\le C\| S_{\tau,h} a_n^* - S a_n^* \|^2_{H^2(\Omega)}+\| S a_n^* - S a^* \|^2_{H^2(\Omega)} \\
&\leq Ch^{-2}e_S\|a_n^*\|_X^2+C \|a_n^* - a^* \|^2_{L^2(\Omega)}.
\end{align*}

Collecting all the above estimates and using the first inequality in Remark \ref{core} with $X=D((-\Delta)^{\beta})$, we see that
\begin{align*}
\mathbb{E} \bigl[ \| a_n^* - a^* \|^{2+\frac{2}{\beta}}_{L^2(\Omega)} \bigr]
\le &C\mathbb{E} \bigl[M_n^{\frac2\beta} \bigr] \left[n^{-\frac4d}\mathbb{E} \bigl[\|a_n^* - a^* \|^2_{L^2(\Omega)} \bigr] +  \rho_n \| a^* \|^2_{X} +  \frac{\sigma^2}{n \rho_n^{\frac{d}{4}/(1+\beta)}}
\right]\\
&+C\mathbb{E}\bigl[M_n^{\frac2\beta} \bigr]\left(1+n^{-\frac4d}h^{-2}\right)e_S\|a_n^*\|^2_X.
\end{align*}

Moreover, for $\varepsilon>0$, by the use of the Young inequality $|\xi\zeta| \le \frac{C}{\varepsilon} |\xi|^p + \varepsilon |\zeta|^q$ with $p=\beta+1$ and $q=\frac{\beta+1}{\beta}$, we obtain
\[
 n^{-\frac4d}\|a_n - a^* \|^2_{L^2(\Omega)}  \le \frac{C}{\varepsilon}n^{-\frac{4(1+\beta)}d} + \varepsilon \|a_n - a^* \|^{2+\frac2\beta}_{L^2(\Omega)}.
\]
Consequently, we see that
\[
\begin{aligned}
&\mathbb{E} \bigl[ \| a_n^* - a^* \|^{2+\frac{2}{\beta}}_{L^2(\Omega)} \bigr] \\
\le&  C\mathbb{E} \bigl[M_n^{\frac2\beta} \bigr] \left[ \frac{1}{\varepsilon}n^{-\frac{4(1+\beta)}d}  + \varepsilon \mathbb{E} \bigl[\|a_n^* - a^* \|^{2+\frac2\beta}_{L^2(\Omega)} \bigr] +  \rho_n \| a^* \|^2_{X} +  \frac{\sigma^2}{n \rho_n^{\frac{d}{4}/(1+\beta)}} \right]\\
&+C\mathbb{E}\bigl[M_n^{\frac2\beta} \bigr]\left(1+n^{-\frac4d}h^{-2}\right)e_S\|a_n^*\|^2_X.
\end{aligned}
\]
By letting $\varepsilon=\frac1{2C\mathbb{E} \bigl[M_n^{\frac2\beta} \bigr]}$, we can see that the term  $\varepsilon\mathbb{E} \bigl[\|a_n^* - a^* \|^{2+\frac2\beta}_{L^2(\Omega)} \bigr]$ on the right hand side of the above inequality can be absorbed, and then we get
\[
\begin{aligned}
\mathbb{E} \bigl[ \| a_n^* - a^* \|^{2+\frac{2}{\beta}}_{L^2(\Omega)} \bigr] 
\le & C\mathbb{E} \bigl[M_n^{\frac2\beta} \bigr] \left[ 2C\mathbb{E} \bigl[M_n^{\frac2\beta} \bigr] n^{-\frac{4(1+\beta)}d}  +  \rho_n \| a^* \|^2_{X} +  \frac{\sigma^2}{n \rho_n^{\frac{d}{4}/(1+\beta)}} \right]\\
&+C\mathbb{E}\bigl[M_n^{\frac2\beta} \bigr]\left(1+n^{-\frac4d}h^{-2}\right)e_S\|a_n^*\|^2_X.
\end{aligned}
\]
Now by using the second estimate in Remark \ref{core} with $X=D((-\Delta)^\beta)$, and noting the definition $M_n:=\|(-\Delta)^\beta (a_n^* - a^*)\|_{L^2(\Omega)}$, it follows that
$$
\mathbb{E} \bigl[M_n^{\frac2\beta} \bigr] \le C\left[ \| a^* \|^2_{X} + \frac{\sigma^2}{n \rho_n^{1 + \frac{d}4/(1+\beta)}}\right]^{\frac1\beta},
$$
from which we further know that
\[
\begin{aligned}
\mathbb{E} \bigl[ \| a_n^* - a^* \|^{2+\frac{2}{\beta}}_{L^2(\Omega)} \bigr] 
\le&  C\mathbb{E} \bigl[M^{\frac2\beta} \bigr] \left[ \frac{\mathbb{E} \bigl[M^{\frac2\beta} \bigr]}{n^{4(1+\beta)/d}}  +  \rho_n \| a^* \|^2_{X} +  \frac{\sigma^2}{n \rho_n^{\frac{d}{4}/(1+\beta)}} \right]\\
&+C\mathbb{E}\bigl[M_n^{\frac2\beta} \bigr]\left(1+n^{-\frac4d}h^{-2}\right)e_S\|a_n^*\|^2_X\\
\le & C\left[ \| a^* \|^2_{X} + \frac{\sigma^2}{n \rho_n^{1 + \frac{d}4/(1+\beta)}}\right]^{\frac2\beta} \frac1{n^{4(1+\beta)/d}} \\
&+ C\rho_n\left[ \| a^* \|^2_{X} + \frac{\sigma^2}{n \rho_n^{1 + \frac{d}4/(1+\beta)}}\right]^{\frac1\beta+1}\\
&+C\left[ \| a^* \|^2_{X} + \frac{\sigma^2}{n \rho_n^{1 + \frac{d}4/(1+\beta)}}\right]^{\frac1\beta}\left(1+n^{-\frac4d}h^{-2}\right)e_S\|a_n^*\|^2_X.
\end{aligned}
\]
We can complete the proof of the theorem.
\end{proof}

\begin{remark}\label{remark-1}
Remark \ref{core} and Theorem \ref{thm-E-condi} indicate that the optimal regularization parameter has the form: $\rho_n^{\frac12 + \frac{d}8/(1+\beta)} = O(\sigma n^{-\frac12} \|a^*\|^{-1}_{X})$ for $\beta\in(\frac{d}{4},1]$, and the optimal error will be
\[
\mathbb{E} \bigl[ \| a_n^* - a^* \|^{2+\frac2\beta}_{L^2(\Omega)} \bigr]\le C \left(\rho_n+ \frac1{n^{4(1+\beta)/d}} \right)+C\left(1+n^{-\frac4d}h^{-2}\right)e_S\|a_n^*\|^2_X.
\]
If $\left(1+n^{-\frac4d}h^{-2}\right)e_S\|a_n^*\|^2_X<\rho_n+ \frac1{n^{4(1+\beta)/d}}$, we have
\[
\mathbb{E} \bigl[ \| a_n^* - a^* \|^{2+\frac2\beta}_{L^2(\Omega)} \bigr]\le C \left(\rho_n+ \frac1{n^{4(1+\beta)/d}} \right).
\]
\end{remark}

\section{Regularity theory for model (\ref{eq-gov})}\label{sec-reg}
In this section, we recall the well-posedness result of the initial-boundary value problem \eqref{eq-gov}. For this, we make several settings. Let $L^2(\Omega)$ be the square-integrable function space with inner product $(\cdot,\cdot)_{L^2(\Omega)}$ (or $(\cdot,\cdot)$ for short) and let $H^1(\Omega)$, $H^2(\Omega)$ etc. be the usual Sobolev spaces.

The set $\{\lambda_k, \varphi_k\}_{k=1}^{\infty}$ constitutes the Dirichlet eigensystem of the elliptic operator $-\Delta: H^2(\Omega) \cap H_0^1(\Omega) \to L^2(\Omega)$, specifically,
\begin{equation*}
\begin{cases}
-\Delta \varphi_k = \lambda_k \varphi_k & \text{in } \Omega, \\
\varphi_k =0 & \text{on } \partial \Omega,
\end{cases}
\end{equation*}
where $\lambda_k$ is the eigenvalue of the operator $-\Delta$ and satisfies $0 < \lambda_1 \leq \lambda_2 \leq \ldots , \lambda_k \rightarrow \infty$ as $k \rightarrow \infty$, and $\varphi_k$ is the eigenfunctions corresponding to the value $\lambda_k$ and $\{\varphi_k\}_{k=1}^\infty$ forms an orthonormal basis in $L^2(\Omega)$. We have the asymptotic behavior of the eigenvalue $\lambda_k\sim k^{2/d}$ as $k\to\infty$. Then for $\gamma\in\mathbb R$, fractional power $(-\Delta )^{\gamma}$ can be defined 
\begin{equation*}
  (-\Delta )^{\gamma}\psi:=\sum_{k=1}^{\infty}\lambda_k^{\gamma}(\psi, \varphi _k)\varphi _k,\quad \psi \in D((-\Delta)^\gamma),
\end{equation*}
where  
$$\mathcal{D}((-\Delta )^{\gamma}):=\left \{ \psi\in L^{2}(\Omega); \sum_{k=1}^{\infty}\left |\lambda_k^{\gamma}(\psi, \varphi _k ) \right |^2<\infty  \right \}.
$$
The space $D((-\Delta)^\gamma)$ is a Hilbert space equipped with the inner product
\begin{equation*}
( \psi, \phi)_{D((-\Delta)^\gamma)}  = \left ((-\Delta)^\gamma \psi, (-\Delta)^\gamma\phi \right )_{L^2(\Omega)}.
\end{equation*}
Moreover, we define the norm 
$$
\begin{aligned}
\left \| \psi  \right \|_{\mathcal{D}((-\Delta )^\gamma) } 
&= \left ((-\Delta)^\gamma \psi, (-\Delta)^\gamma\psi \right )_{L^2(\Omega)}^{\frac12} 
= \left ( \sum_{n=1}^{\infty}\left |\lambda_n^{\gamma}(\psi, \varphi_n )  \right |^2 \right )^\frac{1}{2}.
\end{aligned}
$$
For short, we also denote the inner product $(\cdot,\cdot)_{\mathcal D((-\Delta)^\gamma)}$ and the norm $\|\cdot\|_{\mathcal D((-\Delta)^\gamma)}$ as $(\cdot,\cdot)_\gamma$ and $\|\cdot\|_\gamma$ if no conflict occurs. Furthermore, it satisfies $\mathcal{D}((-\Delta )^\gamma)\subset{H^{2\gamma}(\Omega)}$ for $\gamma>0$. In particular, we have $\mathcal{D}((-\Delta )^\frac{1}{2}) = H_0^1(\Omega)$, $\mathcal{D}((-\Delta )^{-\frac{1}{2}}) = H^{-1}(\Omega)$ and the norm equivalence $\left \| \cdot   \right \| _{\mathcal{D}((-\Delta )^\gamma  ) }\sim \left \| \cdot  \right \| _{H^{2\gamma}(\Omega)}$ with $\gamma=\pm \frac12$. 

The regularity of the solution is based on the boundedness of the Mittag-Leffler functions in Lemma \ref{lem-ml-asymp}. 
\begin{lemma}(\cite{podlubny1998fract})\label{lem-ml-asymp}
If $0<\alpha<2$, $\beta$ is an arbitrary complex number and $\mu$ is an arbitrary real number such that
$$
  \frac{\pi\alpha}2 <\mu<\min\{\pi,\pi\alpha\},
$$
then
\begin{align*}
|E_{\alpha,\beta}(z)| \le \frac{C}{1+|z|} ,\quad  \mu\le |\arg z|\le\pi,    
\end{align*}
where $E_{\alpha,\beta}(z) = \sum_{k=0}^{\infty} \frac{z^k}{\Gamma(\alpha k + \beta)}, ~ z \in \mathbb{C}$.
\end{lemma}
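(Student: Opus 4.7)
The plan is to prove this classical bound via the Hankel contour representation of the Mittag-Leffler function and then control the resulting integral in the stated sector. First I would recall that for $\alpha\in(0,2)$, $\beta\in\mathbb{C}$, and a Hankel-type contour $\mathcal{H}(\mu')$ with opening angle $2\mu'$ (traversing from $-\infty$ along the ray $\arg\zeta = -\mu'$, around a small circle of radius $\rho$ about the origin, and out to $-\infty$ along $\arg\zeta = \mu'$), one has
$$
E_{\alpha,\beta}(z) \;=\; \frac{1}{2\pi i\,\alpha} \int_{\mathcal{H}(\mu')} \frac{e^{\zeta^{1/\alpha}}\,\zeta^{(1-\beta)/\alpha}}{\zeta - z}\,d\zeta,
$$
provided $z$ lies outside the region enclosed by the contour. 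This representation follows by inserting the Hankel integral for $1/\Gamma(\alpha k+\beta)$ into the defining series of $E_{\alpha,\beta}$ and exchanging the summation with the integration.

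Next, I would split into two regimes. For $|z|\le R$ with $R>0$ any fixed constant, the inequality $|E_{\alpha,\beta}(z)|\le C/(1+|z|)$ is immediate because $E_{\alpha,\beta}$ is entire, hence bounded on compacta, while $(1+|z|)^{-1}$ is bounded below on $\{|z|\le R\}$. For $|z|>R$ in the sector $\mu\le|\arg z|\le\pi$, I would choose $\mu'$ with $\pi\alpha/2<\mu'<\mu$; such $\mu'$ exists exactly because the hypothesis requires $\pi\alpha/2<\mu<\min(\pi,\pi\alpha)$. With this choice, (i) on the rays of $\mathcal{H}(\mu')$ one has $\operatorname{Re}\zeta^{1/\alpha} = |\zeta|^{1/\alpha}\cos(\mu'/\alpha)$ with $\cos(\mu'/\alpha)<0$, yielding exponential decay of $e^{\zeta^{1/\alpha}}$, and (ii) $z$ remains strictly outside the wedge bounded by $\mathcal{H}(\mu')$, so $|\zeta - z|\ge c(|\zeta|+|z|)$ uniformly for $\zeta\in\mathcal{H}(\mu')$.

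Armed with these estimates, I would bound the integral directly: the contribution of each outgoing ray is at most
$$
C \int_\rho^\infty \frac{e^{-c\,r^{1/\alpha}}\,r^{(1-\operatorname{Re}\beta)/\alpha}}{|z|+r}\,dr \;\le\; \frac{C}{|z|},
$$
while the small circular arc contributes at most $C/|z|$ by a crude bound using that $|\zeta - z|\gtrsim|z|$ there. Summing the three pieces gives $|E_{\alpha,\beta}(z)|\le C/|z|$ for $|z|>R$ in the sector, and combining with the bounded-$|z|$ case produces the asserted $C/(1+|z|)$ estimate.

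The principal obstacle is the careful calibration of the contour geometry: the assumption $\pi\alpha/2<\mu<\min(\pi,\pi\alpha)$ is precisely what makes it simultaneously possible to tilt $\mathcal{H}(\mu')$ enough so that $e^{\zeta^{1/\alpha}}$ decays on the contour, and to keep $z$ outside the enclosed region so that no residue at the pole $\zeta=z$ is picked up. If $\mu$ were taken smaller than $\pi\alpha/2$, the exponential factor would grow along the contour; if larger, one would cross the pole and recover the full algebraic expansion $E_{\alpha,\beta}(z)\sim -\sum_{k\ge 1} z^{-k}/\Gamma(\beta-\alpha k)$ rather than just the leading bound. The stated hypothesis threads exactly between these two failures and leaves only the clean decay estimate.
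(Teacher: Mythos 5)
The paper offers no proof of this lemma---it is quoted directly from Podlubny's monograph---and your Hankel-contour argument is precisely the classical proof given there (Theorem 1.6 of that reference): represent $E_{\alpha,\beta}$ by the contour integral with opening angle $\mu'\in(\pi\alpha/2,\mu)$ so that $e^{\zeta^{1/\alpha}}$ decays on the rays while $z$ stays outside the wedge, then bound the rays and the small arc by $C/|z|$ and handle bounded $|z|$ by entirety. The argument is correct and coincides with the source's approach, so nothing further is needed.
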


\begin{theorem}\label{thm-reg-maxi-1}
If $a_0=0$ and $a_1\in D((-\Delta)^{\gamma})$, $\gamma\in(\frac{d}{4},1]$, then
\[
|\partial _tu| \leq C\|(-\Delta)^{\gamma}a_1\|_{L^2}.
\]
\end{theorem}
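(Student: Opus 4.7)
The plan is to use the spectral representation of the solution and reduce the pointwise estimate to a Sobolev embedding that is valid precisely in the regime $\gamma > d/4$ imposed in the hypothesis. Since $a_0 = 0$, $f = 0$, the solution of \eqref{eq-gov} admits the Fourier expansion
\begin{equation*}
u(x,t) = \sum_{k=1}^\infty t\, E_{\alpha,2}(-\lambda_k t^\alpha)\, (a_1,\varphi_k)\, \varphi_k(x),
\end{equation*}
and the classical Mittag-Leffler identity $\frac{d}{dt}\bigl(t\, E_{\alpha,2}(-\lambda_k t^\alpha)\bigr) = E_{\alpha,1}(-\lambda_k t^\alpha)$ yields, after a termwise differentiation that I would justify by uniform convergence,
\begin{equation*}
\partial_t u(x,t) = \sum_{k=1}^\infty E_{\alpha,1}(-\lambda_k t^\alpha)\, (a_1,\varphi_k)\, \varphi_k(x).
\end{equation*}

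Next I would invoke Lemma \ref{lem-ml-asymp} with $\beta = 1$ and the negative real argument $z = -\lambda_k t^\alpha$ (whose argument $\pi$ lies in the admissible sector for any admissible $\mu$) to obtain the uniform bound $|E_{\alpha,1}(-\lambda_k t^\alpha)| \le C$, independent of $k$ and $t$. Applying $(-\Delta)^\gamma$ termwise and using Parseval's identity then gives
\begin{equation*}
\|(-\Delta)^\gamma \partial_t u(\cdot,t)\|_{L^2}^2 = \sum_{k=1}^\infty \lambda_k^{2\gamma}\, |E_{\alpha,1}(-\lambda_k t^\alpha)|^2\, |(a_1,\varphi_k)|^2 \le C\, \|(-\Delta)^\gamma a_1\|_{L^2}^2,
\end{equation*}
so that $\partial_t u(\cdot,t)$ is uniformly bounded in $D((-\Delta)^\gamma)$ by the right-hand side.

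Finally I would pass from the $D((-\Delta)^\gamma)$ bound to a pointwise bound by the continuous embedding chain $D((-\Delta)^\gamma) \hookrightarrow H^{2\gamma}(\Omega) \hookrightarrow L^\infty(\Omega)$. The second embedding is where the restriction $\gamma > d/4$ is essential, since it is the borderline Sobolev condition $2\gamma > d/2$; this is precisely why the theorem is stated for $\gamma \in (d/4, 1]$ rather than for smaller $\gamma$. Combining the two inclusions yields
\begin{equation*}
|\partial_t u(x,t)| \le \|\partial_t u(\cdot,t)\|_{L^\infty(\Omega)} \le C\, \|(-\Delta)^\gamma \partial_t u(\cdot,t)\|_{L^2} \le C\, \|(-\Delta)^\gamma a_1\|_{L^2}.
\end{equation*}

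The main obstacle I anticipate is a clean justification of the termwise differentiation under the series sign at arbitrary $t > 0$: the series for $\partial_t u$ converges only in $L^2$ a priori, so I would verify the convergence in $D((-\Delta)^\gamma)$ first (which is immediate from the step above) and then use the embedding, rather than trying to differentiate pointwise in $x$ directly. Everything else is bookkeeping with the Mittag-Leffler bound and standard Sobolev theory.
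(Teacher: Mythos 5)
Your proof is correct, and it reaches the pointwise bound by a different final step than the paper. Both arguments start from the same eigenfunction expansion $\partial_t u=\sum_{k}E_{\alpha,1}(-\lambda_k t^\alpha)(a_1,\varphi_k)\varphi_k$ and the same uniform Mittag--Leffler bound $|E_{\alpha,1}(-\lambda_k t^\alpha)|\le C$ from Lemma \ref{lem-ml-asymp}. The paper then estimates the series termwise in absolute value: it tacitly bounds $|\varphi_k(x)|$ by a constant, splits $|(a_1,\varphi_k)|=\lambda_k^{-\gamma}\cdot\lambda_k^{\gamma}|(a_1,\varphi_k)|$, applies Cauchy--Schwarz, and uses the Weyl asymptotics $\lambda_k\sim k^{2/d}$ to sum $\sum_k\lambda_k^{-2\gamma}<\infty$, which is where $\gamma>\frac{d}{4}$ enters. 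You instead keep the estimate in $L^2$, obtain $\|(-\Delta)^\gamma\partial_t u\|_{L^2}\le C\|(-\Delta)^\gamma a_1\|_{L^2}$, and only then invoke the embedding chain $D((-\Delta)^\gamma)\hookrightarrow H^{2\gamma}(\Omega)\hookrightarrow L^\infty(\Omega)$, where $2\gamma>\frac{d}{2}$ is the same threshold. The two routes are equivalent in spirit (Weyl's law and the Sobolev embedding encode the same dimensional count), but yours has a genuine advantage: it does not require the eigenfunctions $\varphi_k$ to be uniformly bounded in $L^\infty$ independently of $k$, an assumption the paper's termwise estimate uses implicitly and which can fail on general smooth domains in $d=2$, where $\|\varphi_k\|_{L^\infty}$ may grow like a power of $\lambda_k$. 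Your handling of the termwise differentiation is also sound: convergence of the differentiated series in $D((-\Delta)^\gamma)$, hence in $L^\infty$ by the embedding, justifies the identification of the sum with $\partial_t u$.
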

\begin{proof}
The solution to the problem (\ref{eq-gov}) can be expressed as:
\begin{equation}\label{eq-u-wave}
  u(x,t) = \sum_{n=1}^{\infty} t E_{\alpha,2}(-\lambda_n t^\alpha)(a_1, \varphi_n) \varphi_n(x).
\end{equation}
From the definition of the Mittag-Leffler function, Lemma \ref{lem-ml-asymp} and (\ref{eq-u-wave}), one has
\begin{align*}
|\partial_tu(x,t)|
& = \bigg|\sum_{n=1}^{\infty}  E_{\alpha,1}(-\lambda_n t^\alpha)(a_1, \varphi_n) \varphi_n(x)\bigg|\\
& \le C\sum_{n=1}^{\infty}  \big|E_{\alpha,1}(-\lambda_n t^\alpha)\big|\big|(a_1, \varphi_n) \big|\\
& \le C\sum_{n=1}^{\infty} \lambda_n^{-\gamma}\big|E_{\alpha,1}(-\lambda_nt^\alpha)\big|\lambda_n^\gamma\big|(a_1, \varphi_n) \big|\\
& \le C\sqrt{\sum_{n=1}^{\infty} \lambda_n^{-2\gamma}\big|E_{\alpha,1}(-\lambda_nt^\alpha)\big|^2}\sqrt{\sum_{n=1}^{\infty}\lambda_n^{2\gamma}\big|(a_1, \varphi_n) \big|^2}\\
& \le C\|(-\Delta)^{\gamma}a_1\|_{L^2},
\end{align*}
where the last inequality follows from the inequality
\begin{align*}
\sum_{n=1}^{\infty} \lambda_n^{-2\gamma}\big|E_{\alpha,1}(-\lambda_nt^\alpha)\big|^2\le \sum_{n=1}^{\infty} \frac{\lambda_n^{-2\gamma}}{(1+\lambda_nt^\alpha)^2}\le \sum_{n=1}^{\infty} \lambda_n^{-2\gamma} \le C,\end{align*} 
where $\lambda_n\sim n^{\frac{2}{d}}$ and $\gamma\in(\frac{d}{4},1]$. This finishes the proof of the theorem.
\end{proof}

\begin{theorem}\label{thm-reg-maxi-2}
If $a_0=0$ and $a_1\in D((-\Delta)^{\gamma+\epsilon})$, $\gamma\in(\frac{d}{4},1]$ and $0<\epsilon \ll 1$, then
\[
|\partial _{tt}u| \leq Ct^{\epsilon\alpha-1}\|(-\Delta)^{\gamma+\epsilon}a_1\|_{L^2}.
\]
\end{theorem}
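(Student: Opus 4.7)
The plan is to mirror the proof of Theorem \ref{thm-reg-maxi-1}: start from the eigenfunction expansion (\ref{eq-u-wave}), differentiate it twice in $t$, apply Cauchy--Schwarz to separate the data regularity from the Mittag-Leffler kernel, and then use Lemma \ref{lem-ml-asymp} together with the Weyl asymptotic $\lambda_n \sim n^{2/d}$ to control the resulting kernel sum.

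First I would differentiate the series termwise. Using $\Gamma(\alpha k+2)=(\alpha k+1)\Gamma(\alpha k+1)$ and $\Gamma(\alpha k+1)=\alpha k\,\Gamma(\alpha k)$ together with a single index shift, one verifies
\[
\partial_t\bigl[tE_{\alpha,2}(-\lambda_n t^\alpha)\bigr]=E_{\alpha,1}(-\lambda_n t^\alpha),\qquad
\partial_{tt}\bigl[tE_{\alpha,2}(-\lambda_n t^\alpha)\bigr]=-\lambda_n t^{\alpha-1}E_{\alpha,\alpha}(-\lambda_n t^\alpha),
\]
so that
\[
\partial_{tt}u(x,t)=-\sum_{n=1}^\infty \lambda_n t^{\alpha-1}E_{\alpha,\alpha}(-\lambda_n t^\alpha)(a_1,\varphi_n)\varphi_n(x).
\]

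Next I would apply Cauchy--Schwarz with the \emph{marginal} weight $\lambda_n^{d/4+\epsilon}$ on the data side (rather than the naive $\lambda_n^{\gamma+\epsilon}$):
\[
|\partial_{tt}u(x,t)|\le Ct^{\alpha-1}\sqrt{\sum_{n=1}^\infty\lambda_n^{2-d/2-2\epsilon}\bigl|E_{\alpha,\alpha}(-\lambda_n t^\alpha)\bigr|^2}\;\|(-\Delta)^{d/4+\epsilon}a_1\|_{L^2}.
\]
Because $\gamma>d/4$, the elementary spectral bound $\lambda_n^{d/2-2\gamma}\le \lambda_1^{d/2-2\gamma}$ yields $\|(-\Delta)^{d/4+\epsilon}a_1\|_{L^2}\le C\|(-\Delta)^{\gamma+\epsilon}a_1\|_{L^2}$, which matches the norm on the right-hand side of the claim.

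The heart of the argument is the kernel sum. Applying $|E_{\alpha,\alpha}(-\lambda_n t^\alpha)|\le C/(1+\lambda_n t^\alpha)$ from Lemma \ref{lem-ml-asymp} and comparing the sum with an integral via $\lambda_n\sim n^{2/d}$, I would split at the crossover $\lambda_n\sim t^{-\alpha}$. Both the low-frequency piece $\int_0^{t^{-\alpha}}\lambda^{1-2\epsilon}\,d\lambda$ and the high-frequency piece $t^{-2\alpha}\int_{t^{-\alpha}}^\infty\lambda^{-1-2\epsilon}\,d\lambda$ evaluate to $O(t^{-2\alpha(1-\epsilon)})$. Taking the square root and multiplying by the outer $t^{\alpha-1}$ gives the claimed $t^{\epsilon\alpha-1}$.

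The main obstacle I anticipate is choosing the Cauchy--Schwarz weight correctly. The obvious choice $\lambda_n^{\gamma+\epsilon}$ would produce the suboptimal exponent $t^{\alpha(\gamma+\epsilon-d/4)-1}$; the exponent in the statement is reached only by placing the weight at the boundary of summability, namely $\lambda_n^{d/4+\epsilon}$. The strict inequality $\gamma>d/4$ in the hypothesis is exactly what lets the surplus regularity $\gamma-d/4$ be absorbed into the constant via spectral monotonicity of the $D((-\Delta)^{\cdot})$-norms, so that the final bound is expressed in terms of $\|(-\Delta)^{\gamma+\epsilon}a_1\|_{L^2}$ while retaining the sharp time singularity $t^{\epsilon\alpha-1}$.
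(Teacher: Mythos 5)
Your proposal is correct and rests on the same three pillars as the paper's proof---the termwise-differentiated eigenfunction series $\partial_{tt}u=-\sum_n\lambda_nt^{\alpha-1}E_{\alpha,\alpha}(-\lambda_nt^\alpha)(a_1,\varphi_n)\varphi_n$, the decay $|E_{\alpha,\alpha}(-z)|\le C/(1+z)$ from Lemma \ref{lem-ml-asymp}, and Cauchy--Schwarz combined with $\lambda_n\sim n^{2/d}$---but the middle of the argument is organized differently. The paper extracts the time singularity termwise: it writes $\lambda_nt^{\alpha-1}=t^{\epsilon\alpha-1}(\lambda_nt^\alpha)^{1-\epsilon}\lambda_n^{\epsilon}$, bounds $\sup_n(\lambda_nt^\alpha)^{1-\epsilon}/(1+\lambda_nt^\alpha)$ by an absolute constant, and is then left with the data sum $\sum_n\lambda_n^{\epsilon}|(a_1,\varphi_n)|$, which it controls by Cauchy--Schwarz against the convergent series $\sum_n\lambda_n^{-2\gamma}$. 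You instead apply Cauchy--Schwarz first, with the borderline weight $\lambda_n^{d/4+\epsilon}$ on the data side, and then evaluate the full kernel sum $\sum_n\lambda_n^{2-d/2-2\epsilon}(1+\lambda_nt^\alpha)^{-2}$ by an integral comparison split at $\lambda\sim t^{-\alpha}$; your exponent bookkeeping there is correct and yields the same $t^{\epsilon\alpha-1}$. Each route buys something: yours shows the estimate already holds with the weaker norm $\|(-\Delta)^{d/4+\epsilon}a_1\|_{L^2}$ (the surplus $\gamma-d/4$ being absorbed by spectral monotonicity, as you observe), and it makes explicit where $d/4$ enters; the price is that your high-frequency integral contributes a factor of order $\epsilon^{-1/2}$ to the constant, whereas the paper's pointwise bound $x^{1-\epsilon}/(1+x)\le 1$ is uniform in $\epsilon$ and requires no integral estimate at all. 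One caveat you share with the paper: the passage from $\bigl|\sum_nc_n\varphi_n(x)\bigr|$ to $C\sum_n|c_n|$ silently invokes a uniform $L^\infty$ bound on the eigenfunctions $\varphi_n$, which neither write-up justifies; this is a defect inherited from the original proof rather than one introduced by your argument.
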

\begin{proof}
Similar to Theorem \ref{thm-reg-maxi-1}, one has
\begin{align*}
|\partial_{tt}u(x,t)|
& \le C\sum_{n=1}^{\infty}  \big|\lambda_n^{1-\epsilon}t^{\alpha-1}E_{\alpha,\alpha}(-\lambda_n t^\alpha)\big|\big|\lambda_n^{\epsilon}(a_1, \varphi_n) \big|\\
& \le Ct^{\epsilon\alpha-1}\sum_{n=1}^{\infty}  \big|(\lambda_nt^\alpha)^{1-\epsilon}E_{\alpha,\alpha}(-\lambda_n t^\alpha)\big|\big|\lambda_n^{\epsilon}(a_1, \varphi_n) \big|\\
& \le Ct^{\epsilon\alpha-1}\sup_n\frac{(\lambda_nt^\alpha)^{1-\epsilon}}{1+\lambda_nt^\alpha}\sum_{n=1}^{\infty}\big|\lambda_n^{\epsilon}(a_1, \varphi_n) \big|,
\end{align*}
where the last inequality follows from the inequality $\frac{(\lambda_nt^\alpha)^{1-\epsilon}}{1+\lambda_nt^\alpha}<C$, $n\geq1$.
Then, it gives that
\begin{align*}
|\partial_{tt}u(x,t)|
& \le Ct^{\epsilon\alpha-1}\sum_{n=1}^{\infty}\big|\lambda_n^{\epsilon}(a_1, \varphi_n) \big|\\
& \le Ct^{\epsilon\alpha-1}\sum_{n=1}^{\infty}\lambda_n^{-\gamma}\lambda_n^{\epsilon+\gamma}\big|(a_1, \varphi_n) \big|\\
& \le Ct^{\epsilon\alpha-1}\sqrt{\sum_{n=1}^{\infty}\lambda_n^{-2\gamma}}\sqrt{\sum_{n=1}^{\infty}\lambda_n^{2(\epsilon+\gamma)}\big|(a_1, \varphi_n) \big|^2}\\
& \le Ct^{\epsilon\alpha-1}\|(-\Delta)^{\gamma+\epsilon}a_1\|_{L^2},
\end{align*} 
where $\sqrt{\sum_{n=1}^{\infty}\lambda_n^{-2\gamma}}<C$, $\lambda_n \sim n^\frac{2}{d}$ and $\gamma\in(\frac{d}{4},1]$. We complete the proof of the theorem.
\end{proof}

\begin{lemma}\label{lem-reg-L2-1}
If $a_0=0$ and $a_1\in L^2(\Omega)$, then
\[
\|\partial_t^mu\|_{L^2}\le Ct^{1-m}\|a_1\|_{L^2},~~m=0,1.
\]
\end{lemma}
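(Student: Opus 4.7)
The plan is to prove Lemma \ref{lem-reg-L2-1} by a direct spectral expansion argument, exactly in the spirit of Theorems \ref{thm-reg-maxi-1} and \ref{thm-reg-maxi-2} but now estimating the $L^2(\Omega)$ norm via Parseval instead of the pointwise sup via summation of Fourier coefficients. First I would write the solution in the Mittag-Leffler series form
\[
u(x,t) = \sum_{n=1}^\infty tE_{\alpha,2}(-\lambda_n t^\alpha)(a_1,\varphi_n)\varphi_n(x),
\]
as in \eqref{eq-u-wave}. Then I would use the differentiation identity
\[
\frac{d}{dt}\bigl[tE_{\alpha,2}(-\lambda_n t^\alpha)\bigr] = E_{\alpha,1}(-\lambda_n t^\alpha)
\]
to obtain the spectral representation of $\partial_t u$. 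This identity is a standard consequence of termwise differentiation of the defining series of the Mittag-Leffler function and is the only calculational ingredient beyond orthonormality.

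Next, since $\{\varphi_n\}$ is orthonormal in $L^2(\Omega)$, Parseval gives
\[
\|u(\cdot,t)\|_{L^2}^2 = \sum_{n=1}^\infty t^2\bigl|E_{\alpha,2}(-\lambda_n t^\alpha)\bigr|^2|(a_1,\varphi_n)|^2,
\qquad
\|\partial_t u(\cdot,t)\|_{L^2}^2 = \sum_{n=1}^\infty \bigl|E_{\alpha,1}(-\lambda_n t^\alpha)\bigr|^2|(a_1,\varphi_n)|^2.
\]
I would then invoke Lemma \ref{lem-ml-asymp} with $\beta=1,2$ to bound
\[
\bigl|E_{\alpha,\beta}(-\lambda_n t^\alpha)\bigr|\le \frac{C}{1+\lambda_n t^\alpha}\le C,
\]
uniformly in $n\ge 1$ and $t>0$. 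Note that here no negative power of $\lambda_n$ is needed, because we only want $L^2$-level regularity — precisely the reason why the hypothesis $a_1\in L^2(\Omega)$ suffices rather than $a_1\in D((-\Delta)^{\gamma})$.

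Substituting the uniform bounds back and pulling the constant out yields
\[
\|u(\cdot,t)\|_{L^2}^2 \le Ct^2\sum_{n=1}^\infty|(a_1,\varphi_n)|^2 = Ct^2\|a_1\|_{L^2}^2,
\qquad
\|\partial_t u(\cdot,t)\|_{L^2}^2 \le C\sum_{n=1}^\infty|(a_1,\varphi_n)|^2 = C\|a_1\|_{L^2}^2,
\]
which is exactly the claim $\|\partial_t^m u\|_{L^2}\le Ct^{1-m}\|a_1\|_{L^2}$ for $m=0,1$. Honestly, there is no real obstacle here — the argument is routine once the Mittag-Leffler derivative identity and the $\beta=1$ case of Lemma \ref{lem-ml-asymp} (the case $\beta=2$ used by Theorem \ref{thm-reg-maxi-1} is identical) are in hand. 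The mild subtlety worth writing out carefully is simply that uniform-in-$n$ boundedness of $E_{\alpha,\beta}(-\lambda_n t^\alpha)$, as opposed to a negative-power-of-$\lambda_n$ bound, is enough because Parseval already absorbs the spectral weights into $\|a_1\|_{L^2}^2$.
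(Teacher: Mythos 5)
Your argument is correct and coincides with the paper's own proof: both expand $u$ in the eigenbasis, apply Parseval, differentiate the Mittag-Leffler series termwise to get $\partial_t u=\sum_n E_{\alpha,1}(-\lambda_n t^\alpha)(a_1,\varphi_n)\varphi_n$, and conclude via the uniform bound $|E_{\alpha,\beta}(-\lambda_n t^\alpha)|\le C/(1+\lambda_n t^\alpha)\le C$ from Lemma \ref{lem-ml-asymp}. No substantive difference.
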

\begin{proof}
From (\ref{eq-u-wave}), indicates that
\begin{align*}
\|u\|_{L^2}^2
& = \bigg\|\sum_{n=1}^{\infty}  tE_{\alpha,2}(-\lambda_n t^\alpha)(a_1, \varphi_n) \varphi_n(x)\bigg\|_{L^2}^2\\
& = \sum_{n=1}^{\infty}  t^2E_{\alpha,2}(-\lambda_n t^\alpha)^2|(a_1, \varphi_n)|^2 \\
&\le \sup_nt^2E_{\alpha,2}(-\lambda_n t^\alpha)^2\sum_{n=1}^{\infty}|(a_1, \varphi_n)|^2\\
&\le \sup_n\frac{Ct^2}{(1+\lambda_nt^\alpha)^2}\|a_1\|_{L^2}^2
\le Ct^{2}\|a_1\|_{L^2}^2.
\end{align*}
Similarly, we get
\begin{align*}
\|\partial_tu\|_{L^2}^2
& = \bigg\|\sum_{n=1}^{\infty}  E_{\alpha,1}(-\lambda_n t^\alpha)(a_1, \varphi_n) \varphi_n(x)\bigg\|_{L^2}^2\\
& = \sum_{n=1}^{\infty} E_{\alpha,1}(-\lambda_n t^\alpha)^2|(a_1, \varphi_n)|^2 \\
&\le \sup_nE_{\alpha,1}(-\lambda_n t^\alpha)^2\sum_{n=1}^{\infty}|(a_1, \varphi_n)|^2\\
&\le \sup_n\frac{C}{(1+\lambda_nt^\alpha)^2}\|a_1\|_{L^2}^2
\le C\|a_1\|_{L^2}^2.
\end{align*}
The proof of the lemma is complete.
\end{proof}

\begin{lemma}\label{lem-reg-L2-2}
If $a_0=0$ and $a_1\in D((-\Delta)^{\gamma})$, $\gamma\in(0,1]$, then
\[
\|\partial_t^mu\|_{L^2}\le Ct^{\gamma\alpha-m+1}\|(-\Delta)^{\gamma}a_1\|_{L^2},~~m=2,3.
\]
\end{lemma}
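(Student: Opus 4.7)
The plan is to mimic the proof strategy of Theorems \ref{thm-reg-maxi-1}, \ref{thm-reg-maxi-2} and Lemma \ref{lem-reg-L2-1}, working through the eigenfunction expansion \eqref{eq-u-wave} of $u$, but now computing the relevant time derivatives explicitly and then applying Parseval's identity together with the Mittag-Leffler bound in Lemma \ref{lem-ml-asymp}.

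First I would differentiate the series \eqref{eq-u-wave} term by term. Using the standard identity $\frac{d}{dt}[t^{\beta-1}E_{\alpha,\beta}(-\lambda t^\alpha)] = t^{\beta-2}E_{\alpha,\beta-1}(-\lambda t^\alpha)$ (verified directly from the power series by shifting the summation index), one obtains
\[
\partial_{tt}u(x,t) = -\sum_{n=1}^{\infty}\lambda_n t^{\alpha-1}E_{\alpha,\alpha}(-\lambda_n t^\alpha)(a_1,\varphi_n)\varphi_n(x),
\]
\[
\partial_{ttt}u(x,t) = -\sum_{n=1}^{\infty}\lambda_n t^{\alpha-2}E_{\alpha,\alpha-1}(-\lambda_n t^\alpha)(a_1,\varphi_n)\varphi_n(x).
\]
Since $\{\varphi_n\}$ is an orthonormal basis of $L^2(\Omega)$, Parseval's identity yields
\[
\|\partial_t^m u\|_{L^2}^2 = \sum_{n=1}^{\infty}\lambda_n^2 t^{2(\alpha-m+1)}|E_{\alpha,\alpha-m+2}(-\lambda_n t^\alpha)|^2|(a_1,\varphi_n)|^2,\quad m=2,3.
\]

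Next I would invoke Lemma \ref{lem-ml-asymp}: for $\alpha\in(1,2)$ and $\beta\in\{\alpha,\alpha-1\}$, since the argument $-\lambda_n t^\alpha$ is real negative (so $|\arg z|=\pi$ lies in the admissible sector), one has $|E_{\alpha,\beta}(-\lambda_n t^\alpha)| \le C/(1+\lambda_n t^\alpha)$. Inserting this bound gives
\[
\|\partial_t^m u\|_{L^2}^2 \le C\sum_{n=1}^{\infty}\frac{\lambda_n^2 t^{2(\alpha-m+1)}}{(1+\lambda_n t^\alpha)^2}|(a_1,\varphi_n)|^2.
\]
The key algebraic manipulation is then to factor in order to extract $\lambda_n^{2\gamma}$:
\[
\frac{\lambda_n^2 t^{2(\alpha-m+1)}}{(1+\lambda_n t^\alpha)^2} = \lambda_n^{2\gamma}\,t^{2(\gamma\alpha-m+1)}\cdot\frac{(\lambda_n t^\alpha)^{2-2\gamma}}{(1+\lambda_n t^\alpha)^2}.
\]
Because $\gamma\in(0,1]$, the exponent $2-2\gamma\in[0,2)$, so the function $x\mapsto x^{2-2\gamma}/(1+x)^2$ is uniformly bounded on $[0,\infty)$ by some constant $C$.

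Collecting everything, we arrive at
\[
\|\partial_t^m u\|_{L^2}^2 \le C\,t^{2(\gamma\alpha-m+1)}\sum_{n=1}^{\infty}\lambda_n^{2\gamma}|(a_1,\varphi_n)|^2 = C\,t^{2(\gamma\alpha-m+1)}\|(-\Delta)^\gamma a_1\|_{L^2}^2,
\]
which is the claimed estimate after taking square roots. The whole argument is essentially calculational; the only subtle step is ensuring the Mittag-Leffler bound applies with $\beta=\alpha-1$, which is immediate since Lemma \ref{lem-ml-asymp} allows arbitrary complex $\beta$ provided $\alpha\in(0,2)$. No convergence issues arise in the term-by-term differentiation for $t>0$ because of the exponential-type decay encoded in the Mittag-Leffler bound.
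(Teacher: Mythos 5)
Your proposal is correct and follows essentially the same route as the paper: term-by-term differentiation of the eigenfunction expansion \eqref{eq-u-wave}, Parseval's identity, the bound $|E_{\alpha,\beta}(-\lambda_n t^\alpha)|\le C/(1+\lambda_n t^\alpha)$ from Lemma \ref{lem-ml-asymp}, and the factorization extracting $\lambda_n^{2\gamma}t^{2(\gamma\alpha-m+1)}$ with the uniform bound on $x^{2-2\gamma}/(1+x)^2$. The only cosmetic difference is that you track the sign of $\partial_t^m u$ explicitly and state the unified formula for $m=2,3$, whereas the paper treats the two cases separately and drops the (irrelevant) sign.
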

\begin{proof}
From (\ref{eq-u-wave}), indicates that
\begin{align*}
\|\partial_{tt}u\|_{L^2}^2
& = \bigg\|\sum_{n=1}^{\infty}  \lambda_nt^{\alpha-1}E_{\alpha,\alpha}(-\lambda_n t^\alpha)(a_1, \varphi_n) \varphi_n(x)\bigg\|_{L^2}^2\\
& =\sum_{n=1}^{\infty}  \lambda_n^2t^{2\alpha-2}E_{\alpha,\alpha}(-\lambda_n t^\alpha)^2|(a_1, \varphi_n)|^2\\
& = \sum_{n=1}^{\infty}  (\lambda_nt^\alpha)^{2-2\gamma}t^{2\gamma\alpha-2}E_{\alpha,\alpha}(-\lambda_n t^\alpha)^2\lambda_n^{2\gamma}|(a_1, \varphi_n)|^2\\
& \le Ct^{2\gamma\alpha-2}\sup_n(\lambda_nt^\alpha)^{2-2\gamma}E_{\alpha,\alpha}(-\lambda_n t^\alpha)^2\sum_{n=1}^{\infty}\lambda_n^{2\gamma}|(a_1, \varphi_n)|^2\\
& \le Ct^{2\gamma\alpha-2}\sup_n\frac{(\lambda_nt^\alpha)^{2-2\gamma}}{(1+\lambda_nt^\alpha)^2}\sum_{n=1}^{\infty}\lambda_n^{2\gamma}|(a_1, \varphi_n)|^2\\
& \le Ct^{2\gamma\alpha-2}\|(-\Delta)^\gamma a_1\|_{L^2}^2,
\end{align*}
where the last inequality holds since $\sup_n\frac{(\lambda_nt^\alpha)^{2-2\gamma}}{(1+\lambda_nt^\alpha)^2}<C$.
Similarly, we get
\begin{align*}
\|\partial_{ttt}u\|_{L^2}^2
& = \bigg\|\sum_{n=1}^{\infty}  \lambda_nt^{\alpha-2}E_{\alpha,\alpha-1}(-\lambda_n t^\alpha)(a_1, \varphi_n) \varphi_n(x)\bigg\|_{L^2}^2\\
& = \sum_{n=1}^{\infty}  (\lambda_nt^\alpha)^{2-2\gamma}t^{2\gamma\alpha-4}E_{\alpha,\alpha-1}(-\lambda_n t^\alpha)^2\lambda_n^{2\gamma}|(a_1, \varphi_n)|^2\\
& \le Ct^{2\gamma\alpha-4}\sup_n(\lambda_nt^\alpha)^{2-2\gamma}E_{\alpha,\alpha-1}(-\lambda_n t^\alpha)^2\sum_{n=1}^{\infty}\lambda_n^{2\gamma}|(a_1, \varphi_n)|^2\\
& \le Ct^{2\gamma\alpha-4}\|(-\Delta)^\gamma a_1\|_{L^2}^2.
\end{align*}
Collecting all the above estimates, we finish the proof of the lemma.
\end{proof}

\begin{theorem}\label{thm-reg-delta-1}
If $a_0=0$ and $a_1\in D((-\Delta)^{\gamma+\epsilon})$, $\gamma\in \mathbb R$ and $\epsilon \in(0,1]$, then
\[
\|\partial_t^m(-\Delta)^{\gamma+\epsilon}u\|_{L^2}\le Ct^{1-m}\|(-\Delta)^{\gamma+\epsilon}a_1\|_{L^2},~~m=0,1,
\]
\[
\|\partial_t^m(-\Delta)^{\gamma}u\|_{L^2}\le Ct^{\epsilon\alpha-m+1}\|(-\Delta)^{\gamma+\epsilon}a_1\|_{L^2},~~m=2,3.
\]
\end{theorem}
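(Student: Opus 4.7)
The plan is to derive both estimates from the spectral representation \eqref{eq-u-wave} together with the Mittag-Leffler bound in Lemma~\ref{lem-ml-asymp}, essentially extending the two preparatory lemmas \ref{lem-reg-L2-1} and \ref{lem-reg-L2-2} to the fractional-power setting. The key observation is that the operator $(-\Delta)^s$ is diagonal in the eigenbasis $\{\varphi_n\}$, so it commutes with $\partial_t^m$ applied to the series, and simply multiplies each coefficient by $\lambda_n^s$.

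For the first estimate ($m=0,1$), I would apply $(-\Delta)^{\gamma+\epsilon}$ to \eqref{eq-u-wave} termwise to obtain
\[
\partial_t^m (-\Delta)^{\gamma+\epsilon} u(x,t) = \sum_{n=1}^\infty t^{1-m} E_{\alpha,2-m}(-\lambda_n t^\alpha)\, \lambda_n^{\gamma+\epsilon}(a_1,\varphi_n)\,\varphi_n(x),
\]
then take $L^2$-norms, use Parseval, and bound each factor $|E_{\alpha,2-m}(-\lambda_n t^\alpha)| \le C/(1+\lambda_n t^\alpha) \le C$. Pulling the supremum over $n$ outside the sum leaves exactly $t^{2(1-m)}\|(-\Delta)^{\gamma+\epsilon} a_1\|_{L^2}^2$. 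This is just the argument of Lemma \ref{lem-reg-L2-1} applied to the surrogate initial datum $(-\Delta)^{\gamma+\epsilon} a_1\in L^2(\Omega)$, which is legitimate because $a_1\in D((-\Delta)^{\gamma+\epsilon})$.

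For the second estimate ($m=2,3$), I would apply only $(-\Delta)^\gamma$ and differentiate termwise to get expressions containing $\lambda_n^{\gamma+1} t^{\alpha-m+1} E_{\alpha,\alpha-m+2}(-\lambda_n t^\alpha)$. The crucial algebraic manipulation, borrowed from the proof of Lemma \ref{lem-reg-L2-2}, is the splitting
\[
\lambda_n^{\gamma+1} t^{\alpha-m+1} = (\lambda_n t^\alpha)^{1-\epsilon}\, t^{\epsilon\alpha-m+1}\, \lambda_n^{\gamma+\epsilon},
\]
which moves one full power of $\lambda_n$ partly into the Mittag-Leffler weight and partly onto the initial datum with the required exponent $\gamma+\epsilon$. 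Squaring, summing, and using
\[
\sup_{n\ge 1}\frac{(\lambda_n t^\alpha)^{2-2\epsilon}}{(1+\lambda_n t^\alpha)^2} \le C
\]
for $\epsilon\in(0,1]$ (since the numerator grows slower than the denominator), together with Parseval, gives exactly the desired bound $Ct^{2(\epsilon\alpha-m+1)}\|(-\Delta)^{\gamma+\epsilon} a_1\|_{L^2}^2$.

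The main obstacle is less analytic than bookkeeping: one must carefully track the interplay between the power of $\lambda_n$ absorbed by the Mittag-Leffler factor and the residual power of $t$ that survives. The constraint $\epsilon\in(0,1]$ is precisely what is needed so that $(\lambda_n t^\alpha)^{1-\epsilon}/(1+\lambda_n t^\alpha)$ remains bounded uniformly in $n$ and $t>0$; this is where the assumption on $\epsilon$ enters essentially. No further assumption on $\gamma\in\mathbb{R}$ is needed, since $(-\Delta)^{\gamma+\epsilon} a_1\in L^2(\Omega)$ encodes exactly the summability required for termwise differentiation and the application of Parseval. The two sub-lemmas \ref{lem-reg-L2-1} and \ref{lem-reg-L2-2} do most of the work; the theorem is their natural generalization upon recognizing that the entire machinery is spectrally diagonal.
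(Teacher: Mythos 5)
Your proposal is correct and follows essentially the same route as the paper, which itself proves this theorem by simply invoking the arguments of Lemmas \ref{lem-reg-L2-1} and \ref{lem-reg-L2-2} applied in the spectrally diagonal setting; your splitting $\lambda_n^{\gamma+1}t^{\alpha-m+1}=(\lambda_n t^\alpha)^{1-\epsilon}t^{\epsilon\alpha-m+1}\lambda_n^{\gamma+\epsilon}$ is exactly the manipulation used there with $\gamma$ replaced by $\epsilon$. The details you supply (termwise application of $(-\Delta)^s$, Parseval, and the uniform bound on $(\lambda_n t^\alpha)^{2-2\epsilon}/(1+\lambda_n t^\alpha)^2$) are precisely what the paper leaves implicit.
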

\begin{proof} The desired results are directly obtained following the idea in lemmas \ref{lem-reg-L2-1} and \ref{lem-reg-L2-2}.
\end{proof}

\begin{theorem}\label{thm-reg-delta-2}
If $a_0=0$ and $a_1\in D((-\Delta)^{\gamma+\epsilon})$, $\gamma\in(\frac14,1]$ and $0<\epsilon \ll 1$, then
\[
\|\partial_t^m(-\Delta)^{\frac12}u\|_{L^2}\le Ct^{1-m-\frac{\alpha}{4}}\|(-\Delta)^{\gamma+\epsilon}a_1\|_{L^2},~~m=0,1,2,3.
\]
\end{theorem}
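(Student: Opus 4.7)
The plan is to adapt the eigenfunction-expansion arguments of Lemma \ref{lem-reg-L2-2} and Theorem \ref{thm-reg-delta-1}, keeping the factor $(-\Delta)^{1/2}$ outside. Starting from the representation (\ref{eq-u-wave}) and differentiating term-by-term as in Lemma \ref{lem-reg-L2-2}, I would obtain, for $m=0,1,2,3$,
\[
\partial_t^m\bigl((-\Delta)^{1/2}u\bigr)(x,t) = \pm\sum_{n=1}^{\infty} \lambda_n^{p_m/2}\, t^{q_m} E_{\alpha,\beta_m}(-\lambda_n t^\alpha)(a_1,\varphi_n)\varphi_n(x),
\]
with $(p_m,q_m,\beta_m)=(1,1,2),(1,0,1),(3,\alpha-1,\alpha),(3,\alpha-2,\alpha-1)$, respectively. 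Parseval's identity then reduces the goal to controlling the series
\[
\|\partial_t^m((-\Delta)^{1/2}u)\|_{L^2}^2 = \sum_{n=1}^{\infty} \lambda_n^{p_m}t^{2q_m} E_{\alpha,\beta_m}(-\lambda_n t^\alpha)^2(a_1,\varphi_n)^2.
\]

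The main step is to split $\lambda_n^{p_m}=\lambda_n^{p_m-2(\gamma+\epsilon)}\cdot\lambda_n^{2(\gamma+\epsilon)}$ and pair the second factor with $(a_1,\varphi_n)^2$ to produce $\|(-\Delta)^{\gamma+\epsilon}a_1\|_{L^2}^2$; the residual $\sup_n\lambda_n^{p_m-2(\gamma+\epsilon)}E_{\alpha,\beta_m}(-\lambda_n t^\alpha)^2$ is rescaled by $z=\lambda_n t^\alpha$ to $t^{-\alpha(p_m-2(\gamma+\epsilon))}\sup_{z\ge 0}z^{p_m-2(\gamma+\epsilon)}E_{\alpha,\beta_m}(-z)^2$. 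Using the universal bound $|E_{\alpha,\beta}(-z)|\le C/(1+z)$ from Lemma \ref{lem-ml-asymp}, this $z$-supremum is finite whenever $p_m-2(\gamma+\epsilon)\in[0,2]$. Assembling the powers of $t$ then gives
\[
\|\partial_t^m((-\Delta)^{1/2}u)\|_{L^2}^2\le C\, t^{\,2(1-m)-\alpha/2+2\alpha(\gamma+\epsilon-1/4)}\|(-\Delta)^{\gamma+\epsilon}a_1\|_{L^2}^2,
\]
and since $\gamma+\epsilon>1/4$ the excess $2\alpha(\gamma+\epsilon-1/4)\ge 0$ is absorbed into a constant depending on $T$ on the bounded interval $(0,T]$, yielding the claim after taking square roots. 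For $\gamma+\epsilon>1/2$ and $m\in\{0,1\}$, the alternative bound $\lambda_n^{1-2(\gamma+\epsilon)}\le\lambda_1^{1-2(\gamma+\epsilon)}$ handles the negative exponent and produces the same final estimate.

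The delicate sub-case is $m\in\{2,3\}$ combined with $\gamma+\epsilon<1/2$, where $p_m-2(\gamma+\epsilon)=3-2(\gamma+\epsilon)>2$ and the $1/(1+z)$ bound of Lemma \ref{lem-ml-asymp} no longer controls the $z$-supremum. My plan for this case is to exploit an enhanced Mittag-Leffler decay available for $\beta_m\in\{\alpha,\alpha-1\}$: the standard asymptotic expansion $E_{\alpha,\beta_m}(-z)\sim -\sum_{k\ge 1}(-z)^{-k}/\Gamma(\beta_m-\alpha k)$ has vanishing $k=1$ coefficient, since $1/\Gamma(0)=1/\Gamma(-1)=0$, which sharpens the decay to $|E_{\alpha,\beta_m}(-z)|\le C/(1+z)^2$. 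This enlarges the admissible window for the $z$-supremum to $p_m-2(\gamma+\epsilon)\in[0,4]$, comfortably covering $\gamma+\epsilon\in(1/4,1]$ with $p_m=3$, after which the same arithmetic as above closes the estimate. Rigorously verifying this enhanced Mittag-Leffler bound uniformly in the relevant parameter ranges is the main obstacle; apart from that, the proof is a routine computation modelled on Lemma \ref{lem-reg-L2-2}.
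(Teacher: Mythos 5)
Your proposal is correct and follows essentially the same route as the paper's proof: eigenfunction expansion, Parseval, and a splitting of the power of $\lambda_n$ so that part is absorbed into $(\lambda_n t^\alpha)^{\theta}$ against the Mittag--Leffler decay and the rest is paired with $(a_1,\varphi_n)$. (The paper splits off only $\lambda_n^{2\xi-1}$ with $\xi=\frac34$, so it lands on $\|(-\Delta)^{1/4}a_1\|_{L^2}\le \|(-\Delta)^{\gamma+\epsilon}a_1\|_{L^2}$ rather than on the full $\lambda_n^{2(\gamma+\epsilon)}$ as you do; this only changes the harmless nonnegative excess power of $t$ that you already absorb into a constant depending on $T$.) The one point you flag as the main obstacle --- the enhanced decay $|E_{\alpha,\beta_m}(-z)|\le C(1+z)^{-2}$ for $\beta_m\in\{\alpha,\alpha-1\}$ --- is resolved in the paper without any new asymptotic analysis: differentiating the kernel gives $\partial_t E_{\alpha,1}(-\lambda t^\alpha)=t^{-1}E_{\alpha,0}(-\lambda t^\alpha)$ and $\partial_t\bigl(t^{-1}E_{\alpha,0}(-\lambda t^\alpha)\bigr)=t^{-2}E_{\alpha,-1}(-\lambda t^\alpha)$, equivalently $zE_{\alpha,\alpha}(z)=E_{\alpha,0}(z)$ and $zE_{\alpha,\alpha-1}(z)=E_{\alpha,-1}(z)$, and Lemma \ref{lem-ml-asymp} is stated for arbitrary $\beta$, in particular $\beta=0$ and $\beta=-1$, so $|E_{\alpha,0}(-z)|,|E_{\alpha,-1}(-z)|\le C(1+z)^{-1}$ is available immediately; this is exactly your $(1+z)^{-2}$ bound in disguise, and your $1/\Gamma(0)=1/\Gamma(-1)=0$ argument via the asymptotic expansion, while valid, is not needed. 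With that substitution your computation closes for all $m=0,1,2,3$ and $\gamma\in(\tfrac14,1]$.
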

\begin{proof} 
For $\xi\in(0,1]$, it gives that
\begin{align*}
\|(-\Delta)^{\frac12}u\|_{L^2}^2
& = \bigg\|\sum_{n=1}^{\infty}  \lambda_n^{\frac12}tE_{\alpha,2}(-\lambda_n t^\alpha)(a_1, \varphi_n) \varphi_n(x)\bigg\|_{L^2}^2\\
&=\sum_{n=1}^\infty\lambda_nt^2E_{\alpha,2}(-\lambda_n t^\alpha)^2|(a_1, \varphi_n)|^2\\
&=\sum_{n=1}^\infty t^{\alpha(2\xi-2)+2}(\lambda_nt^\alpha)^{2-2\xi}E_{\alpha,2}(-\lambda_n t^\alpha)^2\lambda_n^{2\xi-1}|(a_1, \varphi_n)|^2\\
&\le\sup_n\frac{(\lambda_nt^\alpha)^{2-2\xi}}{(1+\lambda_nt^\alpha)^2}t^{\alpha(2\xi-2)+2}\sum_{n=1}^\infty \lambda_n^{2\xi-1}|(a_1, \varphi_n)|^2\\
&\le Ct^{\alpha(2\xi-2)+2}\|(-\Delta)^{\xi-\frac12}a_1\|_{L^2}^2,
\end{align*}
where the last inequality holds since $\sup_n\frac{(\lambda_nt^\alpha)^{2-2\xi}}{(1+\lambda_nt^\alpha)^2}<C$.
We take $\xi=\frac34\leq\gamma+\epsilon+\frac12$, it arrives that
\begin{align*}
\|(-\Delta)^{\frac12}u\|_{L^2}
&\le Ct^{1-\frac{\alpha}{4}}\|(-\Delta)^{\gamma+\epsilon}a_1\|_{L^2}.
\end{align*}
Similarly, one has $\|\partial_t(-\Delta)^{\frac12}u\|_{L^2}\leq Ct^{-\frac{\alpha}{4}}\|(-\Delta)^{\gamma+\epsilon}a_1\|_{L^2}$. The proof is complete.
\end{proof}

Some properties of $E_{\alpha,\beta}(-\lambda_nt^\alpha)$, $\alpha>0$, $\beta\in\mathbb{R}$, are need to deduce the estimates of $\|\partial_t^m(-\Delta)^{\frac12}u\|_{L^2}$, $m=2,3$, 
\begin{align*}
\partial_t[tE_{\alpha,1}(-\lambda_n t^\alpha)]
&= \partial_t\bigg[t\sum_{k=0}^\infty\frac{(-\lambda_nt^\alpha)^k}{\Gamma{(k\alpha+1)}}\bigg]\\
&= \partial_t\bigg[t+\sum_{k=1}^\infty\frac{(-\lambda_n)^kt^{k\alpha+1}}{\Gamma{(k\alpha+1)}}\bigg]\\
&= 1+\sum_{k=1}^\infty\frac{(-\lambda_nt^{\alpha})^k}{\Gamma{(k\alpha+1)}}+\sum_{k=1}^\infty\frac{(-\lambda_nt^{\alpha})^k}{\Gamma{(k\alpha)}}\\
&:=E_{\alpha,1}(-\lambda_nt^\alpha)+E_{\alpha,0}(-\lambda_nt^\alpha),
\end{align*}
and, we get
\begin{align*}
\partial_t[E_{\alpha,1}(-\lambda_n t^\alpha)]
&=\partial_t[t^{-1}tE_{\alpha,1}(-\lambda_n t^\alpha)]\\
&=-t^{-1}E_{\alpha,1}(-\lambda_n t^\alpha)+t^{-1}\partial_t[tE_{\alpha,1}(-\lambda_n t^\alpha)]\\
&=-t^{-1}E_{\alpha,1}(-\lambda_n t^\alpha)+t^{-1}\big(E_{\alpha,1}(-\lambda_nt^\alpha)+E_{\alpha,0}(-\lambda_nt^\alpha)\big)\\
&=t^{-1}E_{\alpha,0}(-\lambda_nt^\alpha).
\end{align*}
Based on above results, it holds that
\begin{align*}
\|\partial_t^2(-\Delta)^{\frac12}u\|_{L^2}^2
& = \bigg\|\sum_{n=1}^{\infty}  \lambda_n^{\frac12}t^{-1}E_{\alpha,0}(-\lambda_n t^\alpha)(a_1, \varphi_n) \varphi_n(x)\bigg\|_{L^2}^2\\
&=\sum_{n=1}^{\infty}  \lambda_nt^{-2}E_{\alpha,0}(-\lambda_n t^\alpha)^2|(a_1, \varphi_n)|^2\\
&=\sum_{n=1}^\infty t^{\alpha(2\xi-2)-2}(\lambda_nt^\alpha)^{2-2\xi}E_{\alpha,0}(-\lambda_n t^\alpha)^2\lambda_n^{2\xi-1}|(a_1, \varphi_n)|^2\\
&\le\sup_n\frac{(\lambda_nt^\alpha)^{2-2\xi}}{(1+\lambda_nt^\alpha)^2}t^{\alpha(2\xi-2)-2}\sum_{n=1}^\infty \lambda_n^{2\xi-1}|(a_1, \varphi_n)|^2\\
&\le Ct^{\alpha(2\xi-2)-2}\|(-\Delta)^{\xi-\frac12}a_1\|_{L^2}^2,
\end{align*}
where $\xi\in(0,1]$, taking $\xi=\frac34\leq\gamma+\epsilon+\frac12$, i. e.
\begin{align*}
\|\partial_t^2(-\Delta)^{\frac12}u\|_{L^2}
&\le Ct^{-\frac{\alpha}{4}-1}\|(-\Delta)^{\gamma+\epsilon}a_1\|_{L^2}^2.
\end{align*}
Similarly, we get
\begin{align*}
\|\partial_t^3(-\Delta)^{\frac12}u\|_{L^2}
&\le Ct^{-\frac{\alpha}{4}-2}\|(-\Delta)^{\gamma+\epsilon}a_1\|_{L^2}^2.
\end{align*}

\begin{theorem}\label{thm-reg-L2-v}
If $a_0=0$ and $a_1\in L^2(\Omega)$, then
\[
\|\partial_t^mv\|_{L^2}\le Ct^{1-\nu-m}\|a_1\|_{L^2},~~m=0,1,2,3.
\]
\end{theorem}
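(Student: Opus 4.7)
The strategy is entirely spectral: write $u$ in the Mittag-Leffler eigenfunction expansion, differentiate $\nu$ times in $t$ to get a clean series for $v=\partial_t^\nu u$, and then repeatedly use the identity
\[
\frac{d}{dt}\bigl[t^{\beta-1}E_{\alpha,\beta}(-\lambda t^\alpha)\bigr]=t^{\beta-2}E_{\alpha,\beta-1}(-\lambda t^\alpha),
\]
which is immediate from the power-series definition of $E_{\alpha,\beta}$ combined with $\Gamma(z)=(z-1)\Gamma(z-1)$.

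First, starting from the representation \eqref{eq-u-wave}, I apply $\partial_t^\nu$ termwise. Using $\partial_t^\nu t^{k\alpha+1}=\Gamma(k\alpha+2)/\Gamma(k\alpha+2-\nu)\,t^{k\alpha+1-\nu}$, the series collapses to
\[
v(x,t)=\partial_t^\nu u(x,t)=\sum_{n=1}^\infty t^{1-\nu}E_{\alpha,2-\nu}(-\lambda_n t^\alpha)(a_1,\varphi_n)\varphi_n(x).
\]
This already gives the $m=0$ case: since Lemma \ref{lem-ml-asymp} yields $|E_{\alpha,2-\nu}(-\lambda_n t^\alpha)|\le C/(1+\lambda_n t^\alpha)\le C$ (valid because $-\lambda_n t^\alpha$ lies on the negative real axis and $\alpha<2$), Parseval gives $\|v\|_{L^2}\le Ct^{1-\nu}\|a_1\|_{L^2}$.

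Next, applying the differentiation identity above iteratively with $\beta=2-\nu,1-\nu,-\nu$ in succession, I obtain the compact formula
\[
\partial_t^m v(x,t)=\sum_{n=1}^\infty t^{1-\nu-m}E_{\alpha,2-\nu-m}(-\lambda_n t^\alpha)(a_1,\varphi_n)\varphi_n(x),\qquad m=0,1,2,3.
\]
For each such $m$, Lemma \ref{lem-ml-asymp} (whose statement holds for arbitrary complex $\beta$) supplies $|E_{\alpha,2-\nu-m}(-\lambda_n t^\alpha)|\le C/(1+\lambda_n t^\alpha)$, so Parseval once more yields
\[
\|\partial_t^m v\|_{L^2}^2\le Ct^{2(1-\nu-m)}\sup_n\frac{1}{(1+\lambda_n t^\alpha)^2}\,\|a_1\|_{L^2}^2\le Ct^{2(1-\nu-m)}\|a_1\|_{L^2}^2,
\]
which is the desired bound.

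The main obstacle is bookkeeping rather than substance: one must verify that the termwise differentiations (both the fractional derivative $\partial_t^\nu$ and the ordinary derivatives $\partial_t^m$) are justified, and that Lemma \ref{lem-ml-asymp} still applies when the second index becomes zero or negative (as happens for $m=2,3$, where $\beta=-\nu$ and $-1-\nu$). The power-series definition of $E_{\alpha,\beta}$ extends without issue to arbitrary $\beta\in\mathbb R$, and the asymptotic bound of Lemma \ref{lem-ml-asymp} is stated for any complex $\beta$, so nothing new is required beyond invoking it with the appropriate second index.
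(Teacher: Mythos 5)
Your proposal is correct and follows essentially the same route as the paper: both derive the spectral representation $v=\sum_n t^{1-\nu}E_{\alpha,2-\nu}(-\lambda_n t^\alpha)(a_1,\varphi_n)\varphi_n$ (the paper via the fractional-integral form of $\partial_t^\nu$, you via termwise fractional differentiation of the power series, which amounts to the same computation), then differentiate termwise using $\frac{d}{dt}[t^{\beta-1}E_{\alpha,\beta}(-\lambda t^\alpha)]=t^{\beta-2}E_{\alpha,\beta-1}(-\lambda t^\alpha)$ and conclude by Parseval together with the uniform bound of Lemma \ref{lem-ml-asymp}. No substantive difference.
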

\begin{proof}
From (\ref{eq-u-wave}), indicates that
\begin{align*}
\|v\|_{L^2}^2
& = \bigg\|\frac{1}{\Gamma{(1-\nu)}}\int_0^t(t-s)^{-\nu}\partial_su(s)ds\bigg\|_{L^2}^2\\
& = \bigg\|\frac{1}{\Gamma{(1-\nu)}}\sum_{n=1}^{\infty}  \int_0^t(t-s)^{-\nu}E_{\alpha,1}(-\lambda_n s^\alpha)ds(a_1, \varphi_n) \varphi_n(x)\bigg\|_{L^2}^2\\
& = \sum_{n=1}^{\infty}\big(t^{1-\nu}E_{\alpha,2-\nu}(-\lambda_n t^\alpha)\big)^2|(a_1, \varphi_n)|^2 \\
&\le t^{2-2\nu}\sup_nE_{\alpha,2-\nu}(-\lambda_n t^\alpha)^2\sum_{n=1}^{\infty}|(a_1, \varphi_n)|^2\\
&\le Ct^{2-2\nu}\|a_1\|_{L^2}^2.
\end{align*}
Similarly, we get
\begin{align*}
\|\partial_t^mv\|_{L^2}^2
& = \sum_{n=1}^{\infty}\big[\partial_t^m\big(t^{1-\nu}E_{\alpha,2-\nu}(-\lambda_n t^\alpha)\big)\big]^2|(a_1, \varphi_n)|^2\\
& = t^{2-2\nu-2m}\sum_{n=1}^{\infty}\big(E_{\alpha,2-\nu-m}(-\lambda_n t^\alpha)\big)^2|(a_1, \varphi_n)|^2\\
&\le Ct^{2-2\nu-2m}\|a_1\|_{L^2}^2.
\end{align*}
We complete the proof of the theorem.
\end{proof}

\section{Numerical experiment}\label{sec-num}
In this section, we carry out some numerical experiments to check the theoretical results. 
In Example \ref{ex1}, we consider testing the convergence statements for one dimensional cases. 
Let us return to the literature where the SFOR method is proposed \cite{LyuP2022SFOR}. Remark 2.2 therein explains why auxiliary variables were introduced to extract the singular term $a_1(x)\omega_{2-\alpha}(t)$. Based on this, they adopted the following numerical framework:
\begin{equation}\label{eq-gov-ref}
\begin{cases}
  \partial_t^\nu \textbf{v}  - \Delta \textbf{u} = t\Delta a_1(x), & (x,t) \in \Omega \times (0,T), \\
  \textbf{v}  = \partial_t^\nu \textbf{u}, & (x,t) \in \Omega \times (0,T), \\
  \textbf{u}(x,0) = \textbf{v}(x,0)=0, & x \in \Omega, \\
  \textbf{u}(x,t) = \textbf{v}(x,t)=0, & (x,t) \in \partial \Omega \times (0,T),
\end{cases}
\end{equation}
where $u=\textbf{u}+ta_1(x)$. The limitation of (\ref{eq-gov-ref}) is $(\Delta a_1(x),\phi(x))$, $x\in\Omega$ exists, where $\phi(x)$ is basis function from finite element space.
Our numerical framework (\ref{eq-gov-trans}) relaxes this requirement. Furthermore, we find that the optimal convergence is reached, despite the presence of $a_1(x)\omega_{2-\alpha}(t)$, when the mesh parameter $r=\frac{4-\alpha}{2-\alpha}$, $1<\alpha<2-\epsilon$, $\epsilon$ is a fixed positive constant. 

Theoretically, it could not work for the case $\alpha\rightarrow2^-$ since $r=\frac{4-\alpha}{2-\alpha}\rightarrow +\infty$. For  $\Delta a_1\in L^2(\Omega)$, scheme (\ref{eq-gov-ref}) is helpful. The reason is presented in Remark \ref{ill-ref}. For more general $a_1(x)$, we can use scheme (\ref{eq-gov-trans}) with bounded $r$. It gives detailed instructions on how to apply the SFOR method in different application cases.

\begin{remark}\label{ill-ref}
The point is \textbf{v} become more regular. Following the idea of Theorem \ref{thm-reg-L2-v}, one has 
$\|\partial_t^m\textbf{v}\|_{L^2}\leq Ct^{1+\nu-m}\|a_1\|_{L^2}$, $m=0,1,2,3$. Combining the analysis in Theorems \ref{thm-conv-1} and \ref{thm-conv-2}, it implies that the mesh parameter $r=2$ is enough.
\end{remark}

\begin{example}(One dimensional)\label{ex1}
Forward problems (\ref{eq-gov}) with $\Omega=(0,\pi)$, $T=0.1$,
\begin{align*}
(a) ~~a_1(x)&=\sin(x),\\
(b)~~a_1(x)&=x,~~x\in(0,\pi/2],~~a_1(x)=\pi-x,~~x\in(\pi/2,\pi).
\end{align*}
The size of the space grids $h=\frac{\pi}{200}$, $N$ is the number of partitions in the time
grids. $e_{L^2}=\max_{1\leq n\leq N}\|u_h^n-U_h^n\|_{L^2}$, where $u_h^n$ and $U_h^n$ are the reference solution ($h=\frac{\pi}{200},N=2048$) and the numerical solution, respectively. Furthermore, to test the convergence rate, let $Order=\log_2(e_{L^2}(N/2)/e_{L^2}(N))$. 
\end{example}

\begin{example}(One dimensional)\label{ex2}
Backward problems (\ref{eq-gov}) with $\Omega=(0,\pi)$, $T=0.1$,
\begin{align*}
(a) ~~a_1(x)&=\sin(x),~~\alpha=1.5,\\
(b)~~a_1(x)&=x,~~x\in(0,\pi/2],~~a_1(x)=\pi-x,~~x\in(\pi/2,\pi),~~\alpha=1.5.
\end{align*}
We use the numerical framework (\ref{num-scheme}) with the mesh parameter $r=\frac{4-\alpha}{2-\alpha}$, where the size of the space grids $h=\frac{\pi}{20}$, $N=2048$ is the number of partitions in the time grids. Reconstruction $a_1^{rec}$ is found by gradient descent method with initial guess $a_1^{rec}=0$.
\end{example}

\begin{table}[!ht]
\caption{Scheme (\ref{eq-gov-trans}) for Example \ref{ex1} (a) with $\alpha=1.25$.}
\renewcommand{\arraystretch}{1}
\def\temptablewidth{1\textwidth}
\begin{center}
 \begin{tabular*}{\temptablewidth}{@{\extracolsep{\fill}}lcccccc}\hline
 $N$ & $r=1$   &   &$r_{opt}=\frac{4-\alpha}{2-\alpha}$      &     &$r=\frac{4-\alpha}{\alpha}$  &    \\ 
 \cline{2-3}\cline{4-5}\cline{6-7}
 & $e_{L^{2}}(N)$         &Order     &$e_{L^{2}}(N)$    &Order    &$e_{L^{2}}(N)$ &Order     \\ \hline
 16 &9.7511e-03     &-      &3.2933e-03   &-       &3.5919e-03  &-  \\ 
 32 &5.8969e-03     &0.726  &1.3099e-03   &1.330   &1.5200e-03  &1.241    \\ 
 64 &3.4701e-03     &0.765  &5.0947e-04   &1.362   &6.2187e-04  &1.289 \\ 
 128 &1.9768e-03    &0.812  &1.9505e-04   &1.385   &2.4760e-04  &1.329    \\ 
 Optimal order  &\multicolumn{6}{c}{1.375} \\ \hline
\end{tabular*}
\end{center}
\end{table}
In example \ref{ex2}, we consider recovering the initial function $a_1$. For the case $(a)$, let $\sigma=0.05$, then the noise lever $\frac{\sigma}{\|Sa_1\|_{L^\infty}}\approx 50\%$. The number of observation points $n$ are taken $11$, $49$ and $199$ in numerical tests. The optimal regularization parameter of $H^1$ regularization $\rho_n = O((\sigma n^{-\frac12} \|a^*\|^{-1}_{H^1})^{12/7})=O(5.1\times10^{-4})$, $O(1.4\times10^{-4})$ and $O(4.3\times10^{-5})$. In Figure \ref{figure1}, 11 observation points are presented. Then, $L^2$ error $\frac{\|a_1^{rec}-a_1\|_{L^2}}{\|a_1\|_{L^2}}$ is given when $\rho_{11}=10^{-k}$. Furthermore, we compare $a_1$ with the optimal $a_1^{rec}$ when $\rho_{11}=10^{-3.25}\approx 5.6\times10^{-4}$. It shows that there is a big gap between reconstruction $a_1^{rec}$ and $a_1$. To obtain a better $a_1^{rec}$, increasing observational data is a viable strategy. The related results in Figures \ref{figure4}-\ref{figure9} support the conclusion of Remark \ref{remark-1}.

For the case $(b)$, let $\sigma=0.015$, then the noise lever $\frac{\sigma}{\|Sa_1\|_{L^\infty}}\approx 10\%$. The number of observation points $n$ is taken $199$ in numerical tests. The optimal regularization parameter $\rho_{199} = O((\sigma n^{-\frac12} \|a^*\|^{-1}_{H^1})^{12/7})=O(3.0\times10^{-6})$. $a_1^{rec}$ found by $H^1$ regularization with $\rho_{199}=3\times 10^{-6}$ is presented in Figure \ref{figure11}. It verifies our method also work for $a_1$ with singularity point.

\begin{example}(Two dimensional)\label{ex3}
Forward problems (\ref{eq-gov}) with $\Omega=(0,1)\times(0,1)$, $T=0.1$,
\begin{align*}
a_1(x,y)&=\ln(1+10x)(x-1)\sin^{\frac{3}{4}}(\pi y).
\end{align*}
The size of the space grids $h=\frac{1}{30}$, $N$ is the number of partitions in the time
grids. Let the numerical solution on fine mesh be the reference solution ($h=\frac{1}{30}$,$N=1280$). 
\end{example}
In Example \ref{ex3}, when we take $r=\frac{4-\alpha}{2-\alpha}$, the convergence rate reaches optimal $2-\alpha/2$.
Under the same accuracy requirements, the use of optimal mesh parameters can save the cost of calculation. Furthermore, it improves the efficiency of numerical inversion.

\begin{example}(Two dimensional)\label{ex4}
Backward problems (\ref{eq-gov}) with $\Omega=(0,1)\times(0,1)$, $T=0.1$,
\begin{align*}
a_1(x,y)&=\ln(1+10x)(x-1)\sin^{\frac{3}{4}}(\pi y),~~\alpha=1.25.
\end{align*}
We use the numerical framework (\ref{num-scheme}) with the mesh parameter $r=\frac{4-\alpha}{2-\alpha}$, where the size of the space grids $h=\frac{1}{30}$, $N=160$ is the number of partitions in the time grids. Reconstruction $a_1^{rec}$ is found by gradient descent method with initial guess $a_1^{rec}=0$.
\end{example}

In Example \ref{ex4}, let $\sigma=0.01$, then the noise lever $\frac{\sigma}{\|Sa_1\|_{L^\infty}}\approx 12\%$.
We take the points of space grids as measurement points. The optimal regularization parameter of $H^1$ regularization $\rho_{841} = O((\sigma n^{-\frac12} \|a^*\|^{-1}_{H^1})^{3/2})=O(1.5\times10^{-6})$. The reconstruction results are presented with different regularization parameters.  $L^2$ errors $\frac{\|a_1^{rec}-a_1\|_{L^2}}{\|a_1\|_{L^2}}=15.0\%$, $11.0\%$ and $24.5\%$ when $\rho_{841}=10^{-5}$, $2\times10^{-6}$ and $10^{-7}$, respectively.
$a_1^{rec}$ with $\rho_{841}=2\times10^{-6}$ close to the exact one, see Figures \ref{figure12} and \ref{figure14}.
However, the peak in Figure \ref{figure13} is not consistent with that of the exact one. And in Figure \ref{figure15}, the reconstruction becomes blurry when $\rho_{841}$ is too small.

\begin{table}[!ht]
\caption{Scheme (\ref{eq-gov-trans}) for Example \ref{ex1} (a) with $\alpha=1.5$.}
\renewcommand{\arraystretch}{1}
\def\temptablewidth{1\textwidth}
\begin{center}
 \begin{tabular*}{\temptablewidth}{@{\extracolsep{\fill}}lcccccc}\hline
 $N$ & $r=1$   &   &$r_{opt}=\frac{4-\alpha}{2-\alpha}$      &     &$r=\frac{4-\alpha}{\alpha}$  &    \\ 
 \cline{2-3}\cline{4-5}\cline{6-7}
 & $e_{L^{2}}(N)$         &Order     &$e_{L^{2}}(N)$    &Order    &$e_{L^{2}}(N)$ &Order     \\ \hline
 16 &2.4091e-02     &-      &8.1512e-03   &-       &1.4993e-02  &-  \\ 
 32 &1.6884e-02     &0.513  &3.5616e-03   &1.195   &8.8464e-03  &0.761    \\ 
 64 &1.1451e-02     &0.560  &1.5057e-03   &1.242   &5.0422e-03  &0.811 \\ 
 128 &7.4624e-03    &0.618  &6.2368e-04   &1.272   &2.7729e-03  &0.863    \\ 
 Optimal order  &\multicolumn{6}{c}{1.25} \\ \hline
\end{tabular*}
\end{center}
\end{table}

\begin{table}[!ht]
\caption{Scheme (\ref{eq-gov-trans}) for Example \ref{ex1} (a) with $\alpha=1.75$.}
\renewcommand{\arraystretch}{1}
\def\temptablewidth{1\textwidth}
\begin{center}
 \begin{tabular*}{\temptablewidth}{@{\extracolsep{\fill}}lcccccc}\hline
 $N$ & $r=1$   &   &$r_{opt}=\frac{4-\alpha}{2-\alpha}$      &     &$r=\frac{4-\alpha}{\alpha}$  &    \\ 
 \cline{2-3}\cline{4-5}\cline{6-7}
 & $e_{L^{2}}(N)$         &Order     &$e_{L^{2}}(N)$    &Order    &$e_{L^{2}}(N)$ &Order     \\ \hline
 16 &4.2309e-02     &-      &2.3254e-02   &-       &4.1819e-02  &-  \\ 
 32 &3.3563e-02     &0.334  &1.1796e-02   &0.979   &3.2089e-02  &0.382    \\ 
 64 &2.5699e-02     &0.385  &5.7768e-03   &1.030   &2.3768e-02  &0.433 \\ 
 128 &1.8823e-02    &0.449  &2.7613e-03   &1.065   &1.6855e-02  &0.496    \\ 
 Optimal order  &\multicolumn{6}{c}{1.125} \\ \hline
\end{tabular*}
\end{center}
\end{table}

\begin{table}[!ht]
\caption{Scheme (\ref{eq-gov-ref}) for Example \ref{ex1} (a).}
\renewcommand{\arraystretch}{1}
\def\temptablewidth{1\textwidth}
\begin{center}
 \begin{tabular*}{\temptablewidth}{@{\extracolsep{\fill}}lcccccc}\hline
 $N$ & $\alpha=1.01$   &$r_{opt}=2$   &$\alpha=1.01$      &$r=\frac{4-\alpha}{\alpha}$     &$\alpha=1.99$  &$r_{opt}=2$    \\ 
 \cline{2-3}\cline{4-5}\cline{6-7}
 & $e_{L^{2}}(N)$         &Order     &$e_{L^{2}}(N)$    &Order    &$e_{L^{2}}(N)$ &Order     \\ \hline
 16 &1.2195e-04     &-      &1.8560e-04   &-       &5.5727e-05  &-  \\ 
 32 &4.5553e-05     &1.421  &7.0649e-05   &1.394   &2.6807e-05  &1.056    \\ 
 64 &1.6669e-05     &1.450  &2.6175e-05   &1.433   &1.2964e-05  &1.048 \\ 
 128 &5.9895e-06    &1.477  &9.4821e-06   &1.465   &6.2090e-06  &1.062    \\ 
 \cline{2-5}\cline{6-7}
 Optimal order  &\multicolumn{4}{c}{1.495} &\multicolumn{2}{c}{1.005} \\ \hline
\end{tabular*}
\end{center}
\end{table}

\begin{table}[!ht]
\caption{Scheme (\ref{eq-gov-trans}) for Example \ref{ex1} (b) with $\alpha=1.25$.}
\renewcommand{\arraystretch}{1}
\def\temptablewidth{1\textwidth}
\begin{center}
 \begin{tabular*}{\temptablewidth}{@{\extracolsep{\fill}}lcccccc}\hline
 $N$ & $r=1$   &   &$r_{opt}=\frac{4-\alpha}{2-\alpha}$      &     &$r=\frac{4-\alpha}{\alpha}$  &    \\ 
 \cline{2-3}\cline{4-5}\cline{6-7}
 & $e_{L^{2}}(N)$         &Order     &$e_{L^{2}}(N)$    &Order    &$e_{L^{2}}(N)$ &Order     \\ \hline
 16 &1.2467e-02     &-      &4.2304e-03   &-       &4.6017e-03  &-  \\ 
 32 &7.5376e-03     &0.726  &1.6830e-03   &1.330   &1.9470e-03  &1.241    \\ 
 64 &4.4349e-03     &0.765  &6.5472e-04   &1.362   &7.9643e-04  &1.290 \\ 
 128 &2.5261e-03    &0.812  &2.5069e-04   &1.385   &3.1707e-04  &1.328    \\ 
 Optimal order  &\multicolumn{6}{c}{1.375} \\ \hline
\end{tabular*}
\end{center}
\end{table}

\begin{table}[!ht]
\caption{Scheme (\ref{eq-gov-trans}) for Example \ref{ex1} (b) with $\alpha=1.5$.}
\renewcommand{\arraystretch}{1}
\def\temptablewidth{1\textwidth}
\begin{center}
 \begin{tabular*}{\temptablewidth}{@{\extracolsep{\fill}}lcccccc}\hline
 $N$ & $r=1$   &   &$r_{opt}=\frac{4-\alpha}{2-\alpha}$      &     &$r=\frac{4-\alpha}{\alpha}$  &    \\ 
 \cline{2-3}\cline{4-5}\cline{6-7}
 & $e_{L^{2}}(N)$         &Order     &$e_{L^{2}}(N)$    &Order    &$e_{L^{2}}(N)$ &Order     \\ \hline
 16 &3.0851e-02     &-      &1.0472e-02   &-       &1.9206e-02  &-  \\ 
 32 &2.1619e-02     &0.513  &4.5766e-03   &1.194   &1.1330e-02  &0.761    \\ 
 64 &1.4660e-02     &0.560  &1.9351e-03   &1.242   &6.4568e-03  &0.811 \\ 
 128 &9.5536e-03    &0.618  &8.0164e-04   &1.271   &3.5505e-03  &0.863    \\ 
 Optimal order  &\multicolumn{6}{c}{1.25} \\ \hline
\end{tabular*}
\end{center}
\end{table}

\begin{table}[!ht]
\caption{Scheme (\ref{eq-gov-trans}) for Example \ref{ex1} (b) with $\alpha=1.75$.}
\renewcommand{\arraystretch}{1}
\def\temptablewidth{1\textwidth}
\begin{center}
 \begin{tabular*}{\temptablewidth}{@{\extracolsep{\fill}}lcccccc}\hline
 $N$ & $r=1$   &   &$r_{opt}=\frac{4-\alpha}{2-\alpha}$      &     &$r=\frac{4-\alpha}{\alpha}$  &    \\ 
 \cline{2-3}\cline{4-5}\cline{6-7}
 & $e_{L^{2}}(N)$         &Order     &$e_{L^{2}}(N)$    &Order    &$e_{L^{2}}(N)$ &Order     \\ \hline
 16 &5.4227e-02     &-      &2.9853e-02   &-       &5.3599e-02  &-  \\ 
 32 &4.3015e-02     &0.334  &1.5145e-02   &0.979   &4.1127e-02  &0.382    \\ 
 64 &3.2936e-02     &0.385  &7.4165e-03   &1.030   &3.0461e-02  &0.433 \\ 
 128 &2.4123e-02    &0.449  &3.5449e-03   &1.065   &2.1601e-02  &0.496    \\ 
 Optimal order  &\multicolumn{6}{c}{1.125} \\ \hline
\end{tabular*}
\end{center}
\end{table}

\begin{table}[!ht]
\caption{Scheme (\ref{eq-gov-trans}) for Example \ref{ex1} (b) with $\alpha=1.99$.}
\renewcommand{\arraystretch}{1}
\def\temptablewidth{1\textwidth}
\begin{center}
 \begin{tabular*}{\temptablewidth}{@{\extracolsep{\fill}}lcccccc}\hline
 $N$ & $r=1$   &   &$r=5$      &     &$r=10$  &    \\ 
 \cline{2-3}\cline{4-5}\cline{6-7}
 & $e_{L^{2}}(N)$         &Order     &$e_{L^{2}}(N)$    &Order    &$e_{L^{2}}(N)$ &Order     \\ \hline
 16 &7.0532e-03     &-      &3.0649e-02   &-       &5.3655e-02  &-  \\ 
 32 &6.1265e-03     &0.203  &2.6342e-02   &0.218   &4.6214e-02  &0.215    \\ 
 64 &5.1452e-03     &0.252  &2.1830e-02   &0.271   &3.8240e-02  &0.273 \\ 
 128 &4.1324e-03    &0.316  &1.7280e-02   &0.337   &3.0184e-02  &0.341    \\ 
 Optimal order  &\multicolumn{6}{c}{1.005} \\ \hline
\end{tabular*}
\end{center}
\end{table}

\begin{figure}[!htb]
        \begin{minipage}{0.33\linewidth}
		\centering
		\setlength{\abovecaptionskip}{0.28cm}
		\includegraphics[width=1\linewidth]{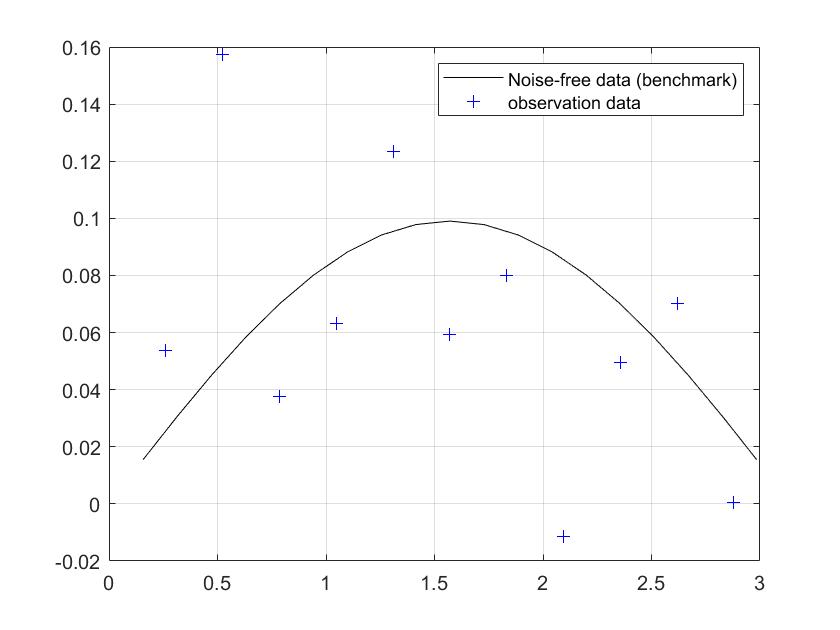}
		\caption{Observation.}
		\label{figure1}
	\end{minipage}
        \begin{minipage}{0.33\linewidth}
		\centering
		\setlength{\abovecaptionskip}{0.28cm}
		\includegraphics[width=1\linewidth]{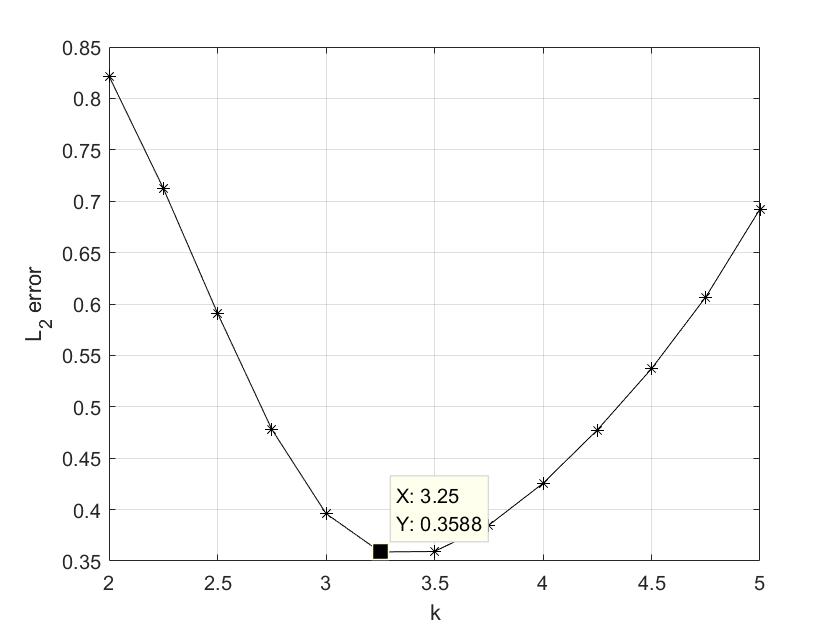}
		\caption{$L^2$ error.}
	  \end{minipage}
        \begin{minipage}{0.33\linewidth}
		\centering
		\setlength{\abovecaptionskip}{0.28cm}
		\includegraphics[width=1\linewidth]{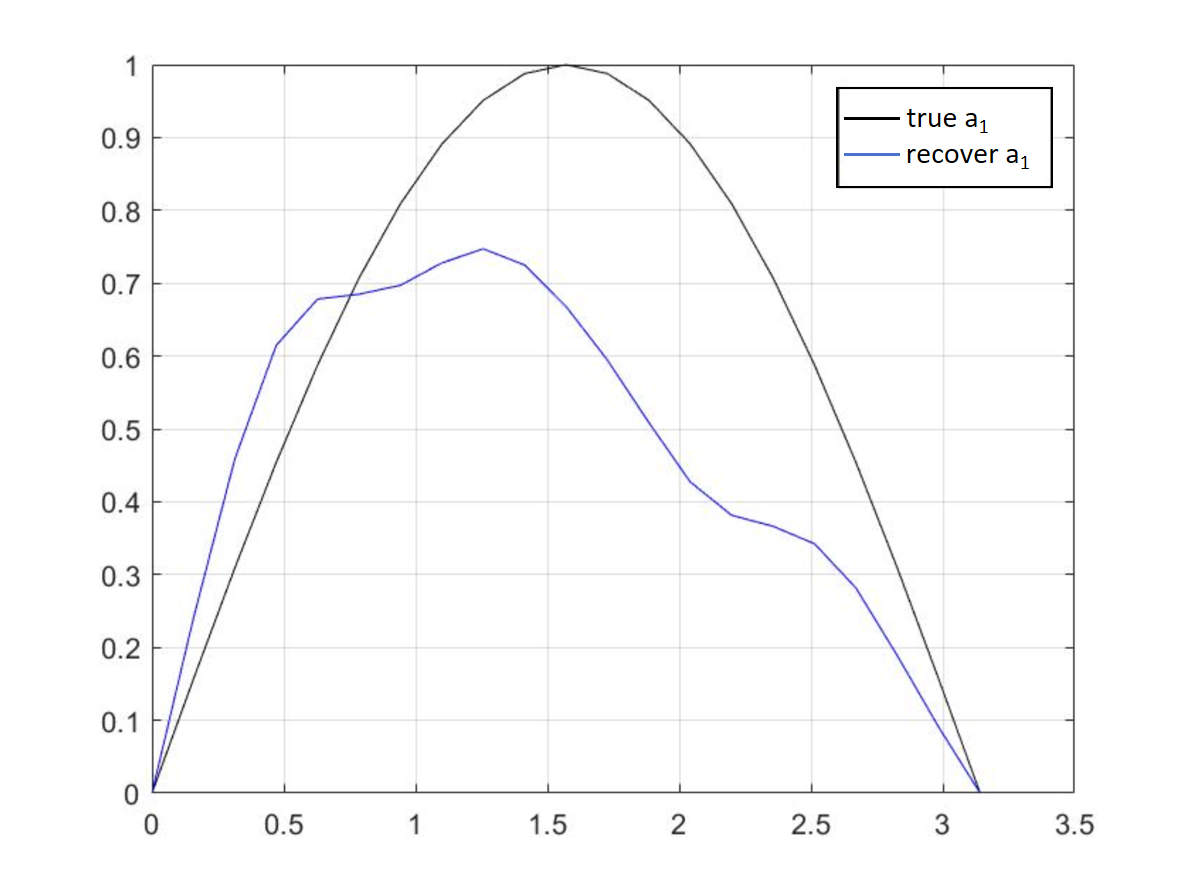}
		\caption{$\rho_{11}=10^{-3.25}$.}
	  \end{minipage}
    
\end{figure}

\begin{figure}[!htb]
        \begin{minipage}{0.33\linewidth}
		\centering
		\setlength{\abovecaptionskip}{0.28cm}
		\includegraphics[width=1\linewidth]{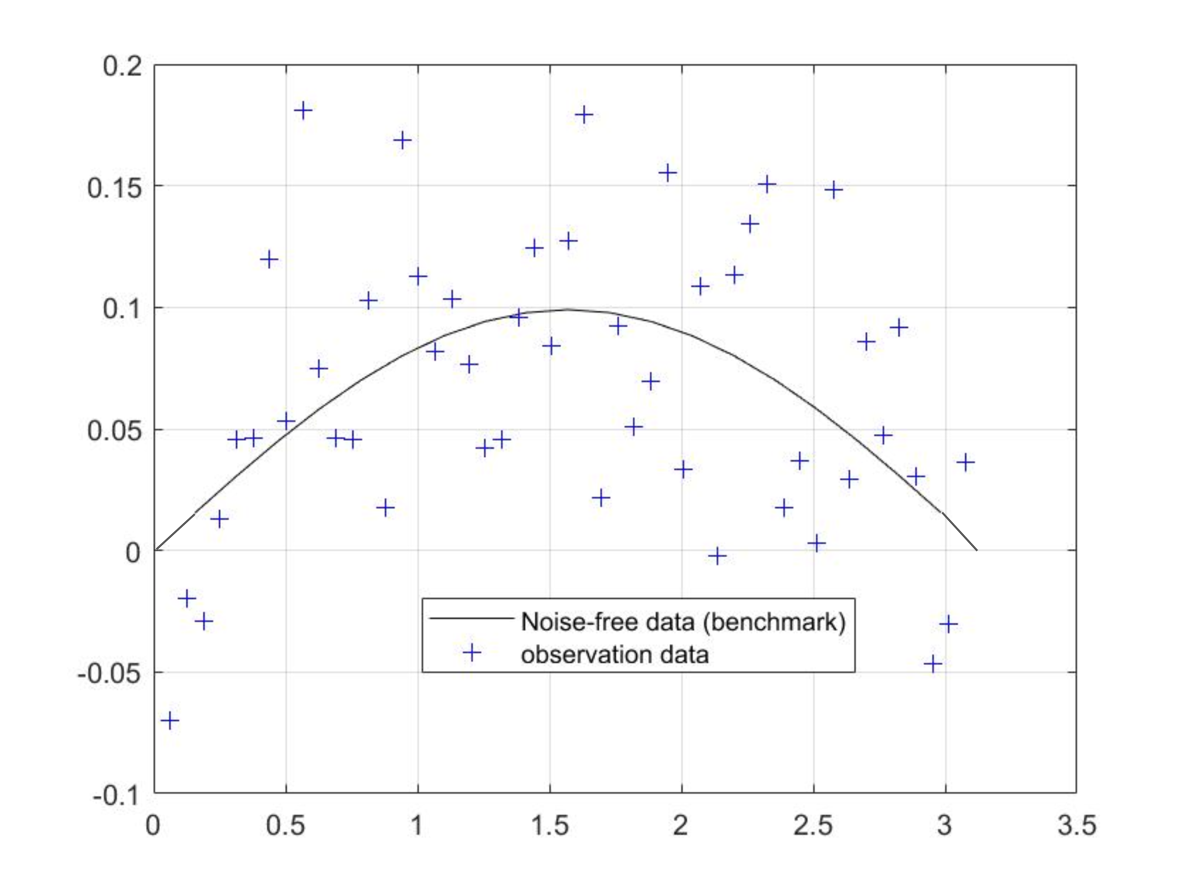}
		\caption{Observation.}
		\label{figure4}
	\end{minipage}
        \begin{minipage}{0.33\linewidth}
		\centering
		\setlength{\abovecaptionskip}{0.28cm}
		\includegraphics[width=1\linewidth]{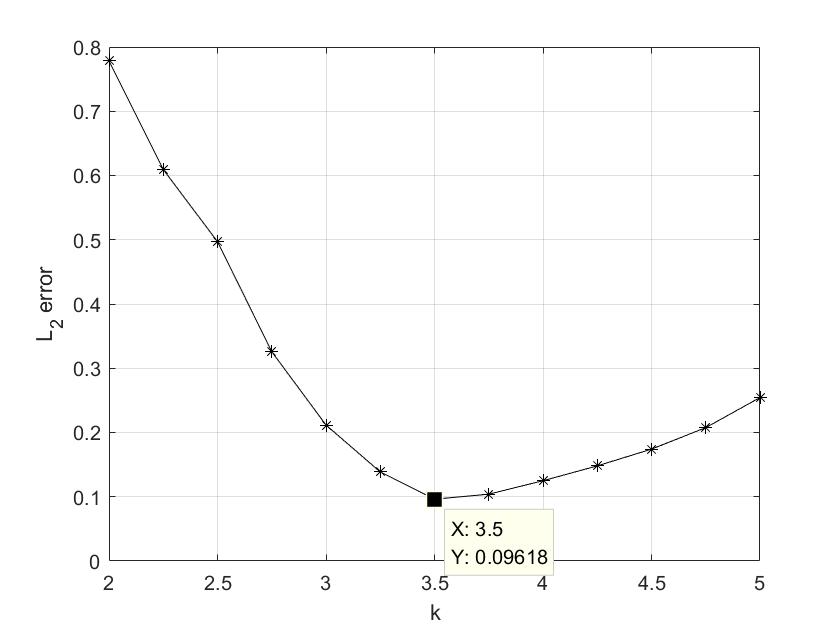}
		\caption{$L^2$ error.}
	  \end{minipage}
        \begin{minipage}{0.33\linewidth}
		\centering
		\setlength{\abovecaptionskip}{0.28cm}
		\includegraphics[width=1\linewidth]{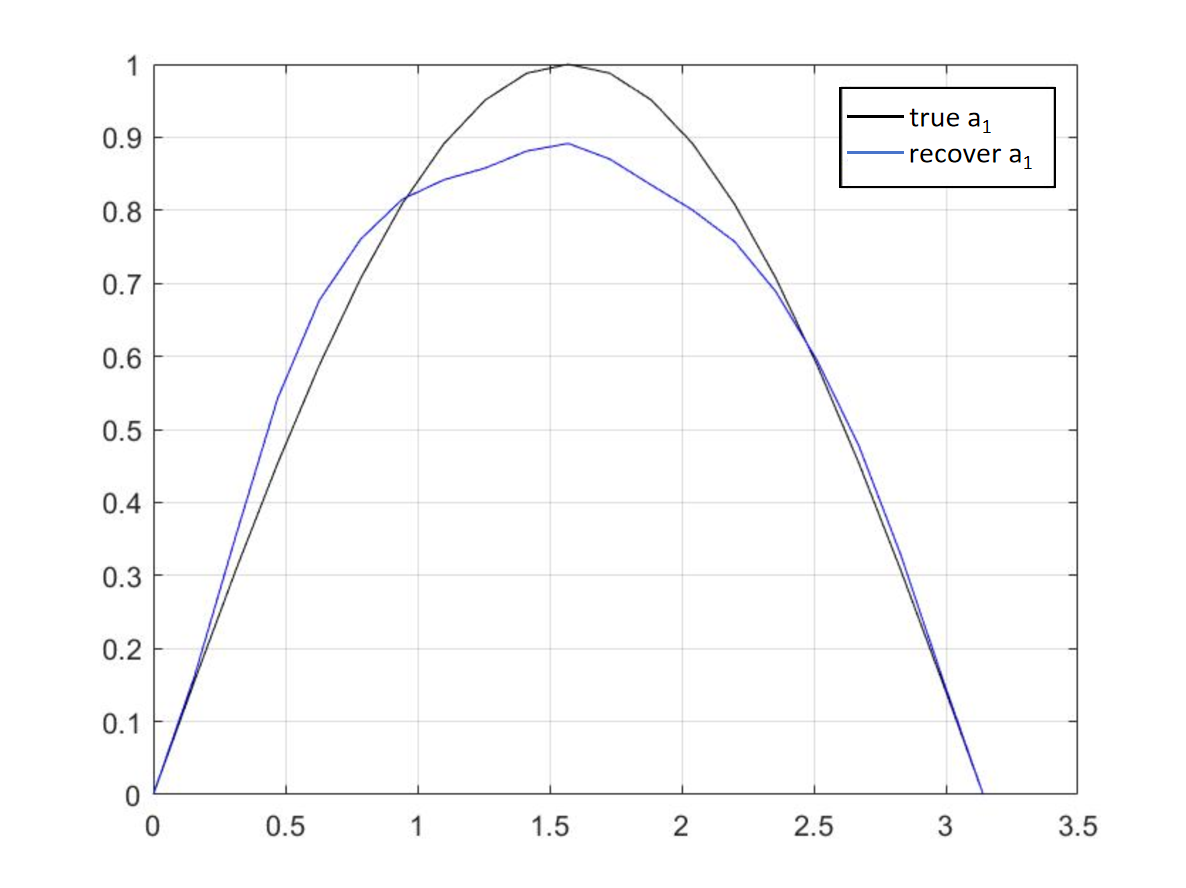}
		\caption{$\rho_{49}=10^{-3.5}$.}
	  \end{minipage}
    
\end{figure}

\begin{figure}[!htb]
        \begin{minipage}{0.33\linewidth}
		\centering
		\setlength{\abovecaptionskip}{0.28cm}
		\includegraphics[width=1\linewidth]{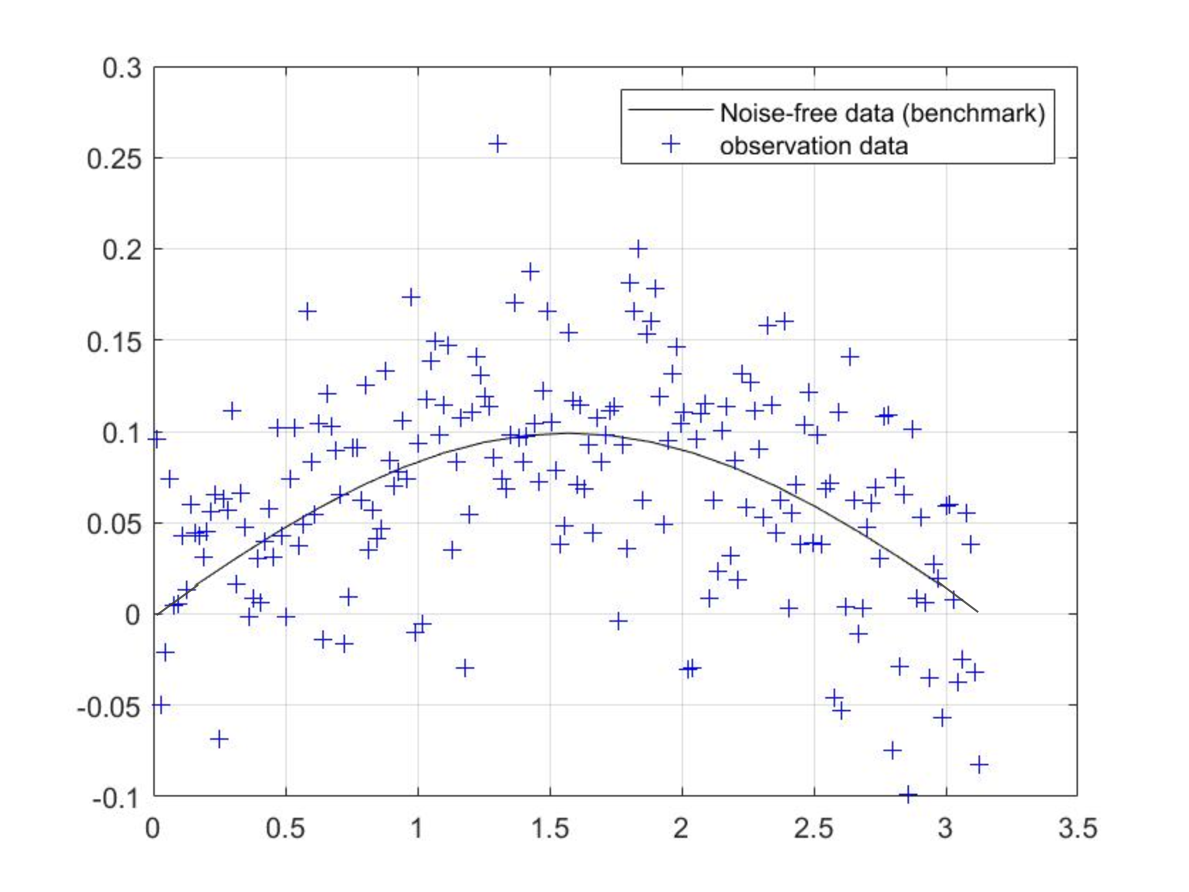}
		\caption{Observation.}
	\end{minipage}
        \begin{minipage}{0.33\linewidth}
		\centering
		\setlength{\abovecaptionskip}{0.28cm}
		\includegraphics[width=1\linewidth]{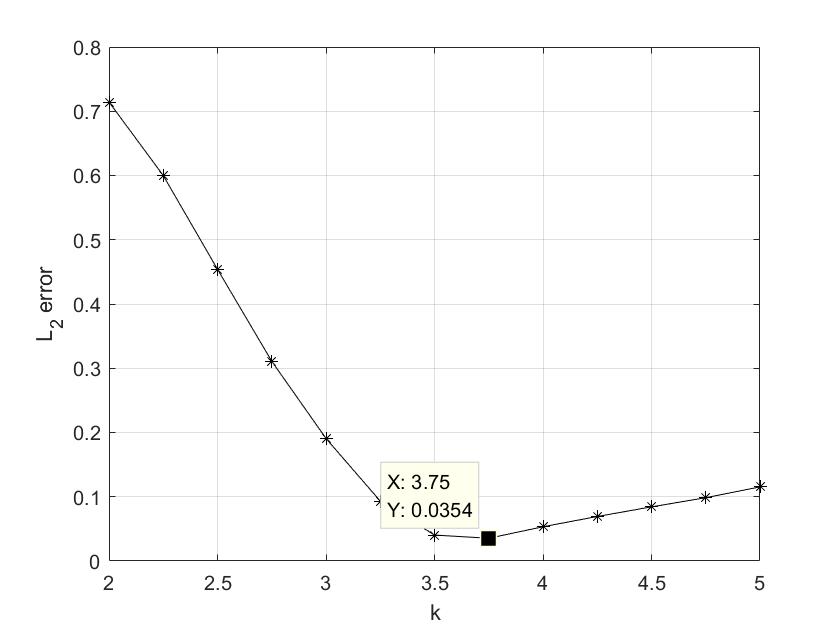}
		\caption{$L^2$ error.}
	  \end{minipage}
        \begin{minipage}{0.33\linewidth}
		\centering
		\setlength{\abovecaptionskip}{0.28cm}
		\includegraphics[width=1\linewidth]{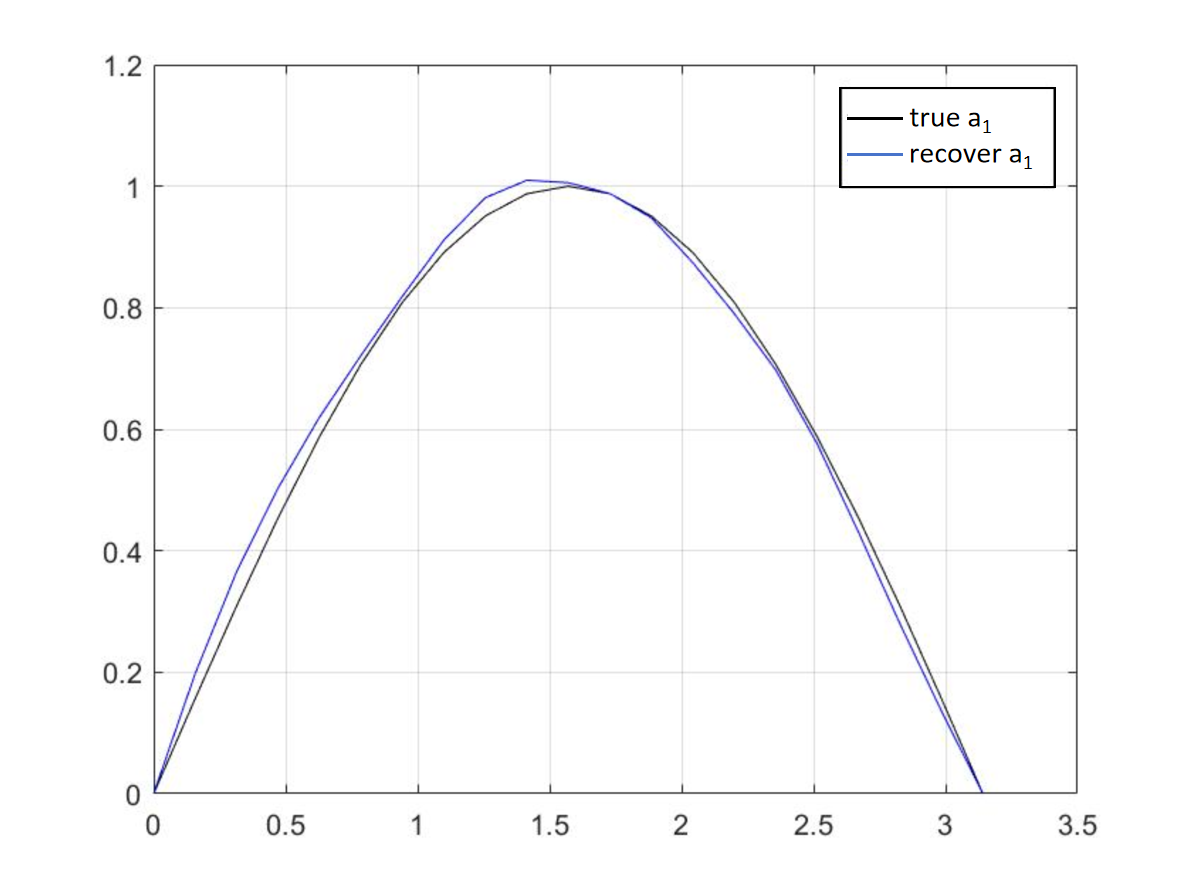}
		\caption{$\rho_{199}=10^{-3.75}$.}
		\label{figure9}
	  \end{minipage}
    
\end{figure}

\begin{figure}[!htb]
        \begin{minipage}{0.49\linewidth}
		\centering
		\setlength{\abovecaptionskip}{0.28cm}
		\includegraphics[width=1\linewidth]{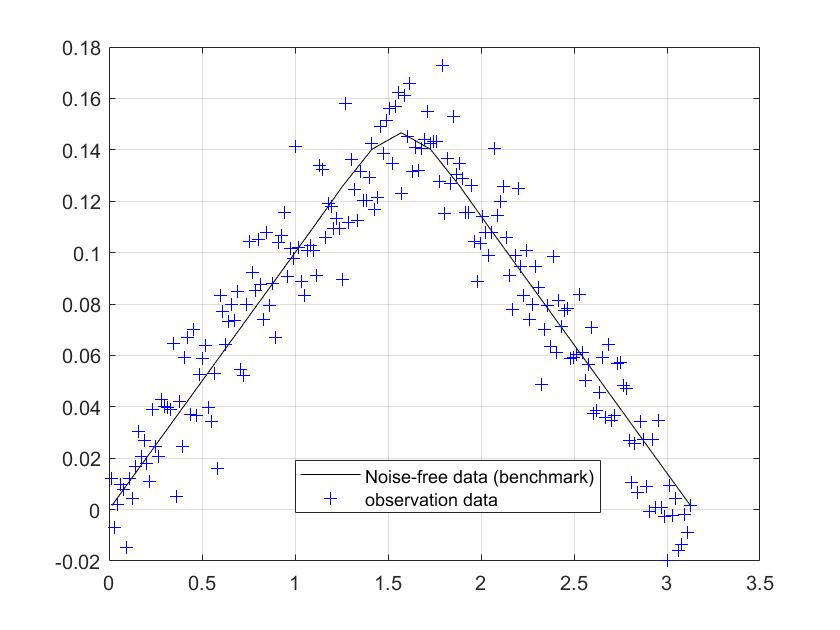}
		\caption{Observation.}
	\end{minipage}
        \begin{minipage}{0.49\linewidth}
		\centering
		\setlength{\abovecaptionskip}{0.28cm}
		\includegraphics[width=1\linewidth]{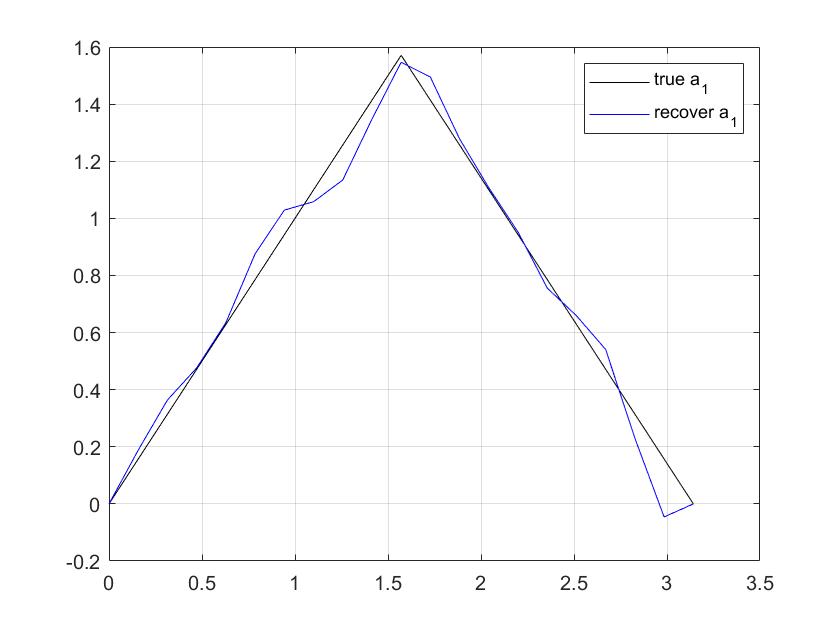}
		\caption{$\rho_{199}=3\times10^{-6}$.}
		\label{figure11}
	  \end{minipage}
    
\end{figure}

\begin{table}[!ht]
\caption{Scheme (\ref{eq-gov-trans}) for Example \ref{ex3} with $\alpha=1.25$.}
\renewcommand{\arraystretch}{1}
\def\temptablewidth{1\textwidth}
\begin{center}
 \begin{tabular*}{\temptablewidth}{@{\extracolsep{\fill}}lcccccc}\hline
 $N$ & $r=1$   &   &$r_{opt}=\frac{4-\alpha}{2-\alpha}$      &     &$r=\frac{4-\alpha}{\alpha}$  &    \\ 
 \cline{2-3}\cline{4-5}\cline{6-7}
 & $e_{L^{2}}(N)$         &Order     &$e_{L^{2}}(N)$    &Order    &$e_{L^{2}}(N)$ &Order     \\ \hline
 20 &2.2836e-03     &-      &1.0643e-03   &-       &9.0033e-04  &-  \\ 
 40 &1.3385e-03     &0.771  &4.2493e-04   &1.325   &3.6861e-04  &1.288    \\ 
 80 &7.6092e-04     &0.815  &1.6494e-04   &1.365   &1.4657e-04  &1.331 \\ 
 160 &4.1144e-04    &0.887  &6.1987e-05   &1.412   &5.6218e-05  &1.383    \\ 
 Optimal order  &\multicolumn{6}{c}{1.375} \\ \hline
\end{tabular*}
\end{center}
\end{table}

\begin{table}[!ht]
\caption{Scheme (\ref{eq-gov-trans}) for Example \ref{ex3} with $\alpha=1.75$.}
\renewcommand{\arraystretch}{1}
\def\temptablewidth{1\textwidth}
\begin{center}
 \begin{tabular*}{\temptablewidth}{@{\extracolsep{\fill}}lcccccc}\hline
 $N$ & $r=1$   &   &$r_{opt}=\frac{4-\alpha}{2-\alpha}$      &     &$r=\frac{4-\alpha}{\alpha}$  &    \\ 
 \cline{2-3}\cline{4-5}\cline{6-7}
 & $e_{L^{2}}(N)$         &Order     &$e_{L^{2}}(N)$    &Order    &$e_{L^{2}}(N)$ &Order     \\ \hline
 20 &1.4366e-02     &-      &8.7758e-03   &-       &1.4216e-02  &-  \\ 
 40 &1.1009e-02     &0.384  &4.3576e-03   &1.010   &1.0541e-02  &0.431    \\ 
 80 &8.0645e-03     &0.449  &2.0575e-03   &1.083   &7.4764e-03  &0.496 \\ 
 160 &5.5304e-03    &0.544  &9.4409e-04   &1.124   &4.9702e-03  &0.589    \\ 
 Optimal order  &\multicolumn{6}{c}{1.125} \\ \hline
\end{tabular*}
\end{center}
\end{table}

\begin{figure}[!htb]
        \begin{minipage}{0.49\linewidth}
		\centering
		\setlength{\abovecaptionskip}{0.28cm}
		\includegraphics[width=1\linewidth]{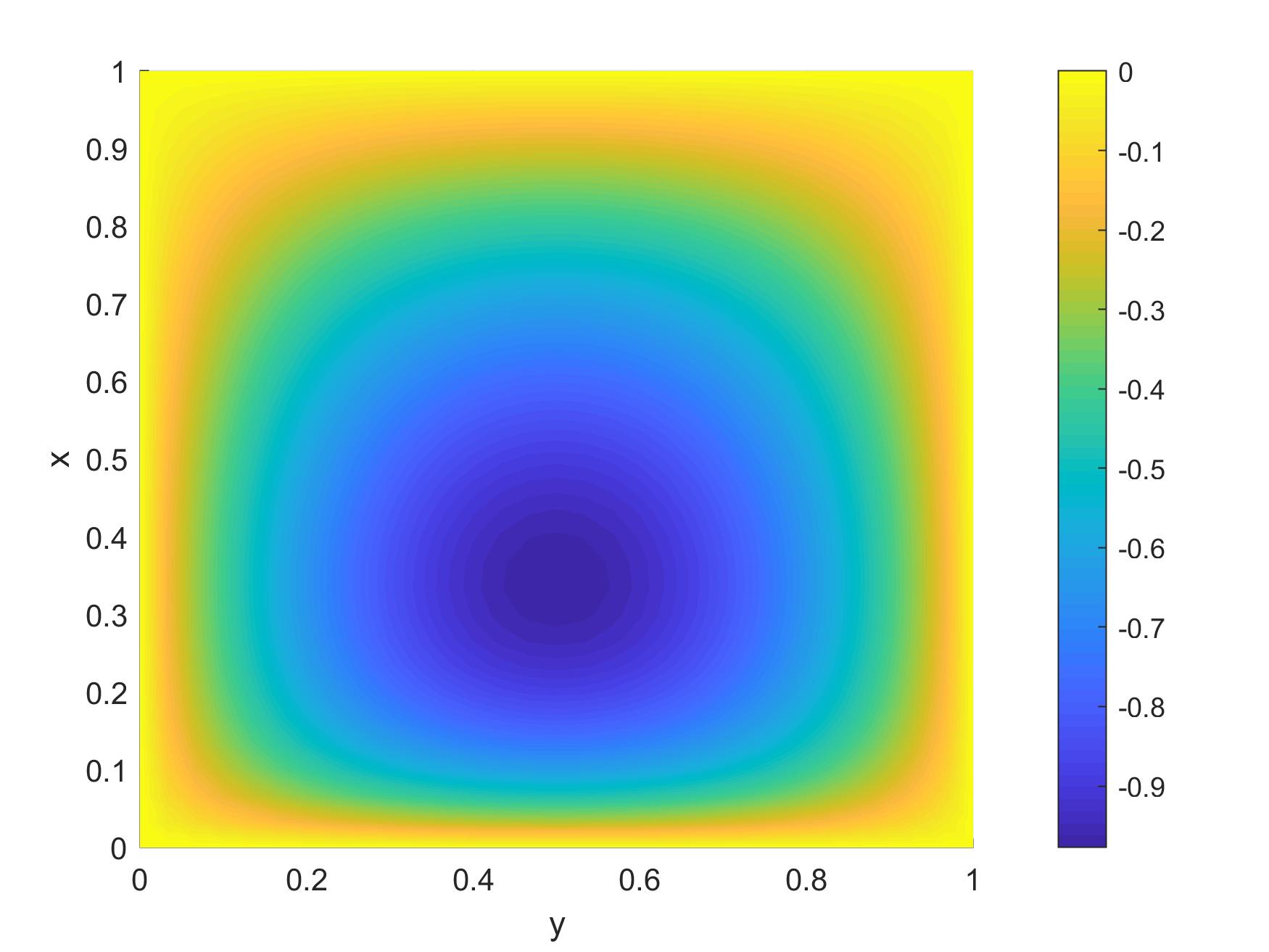}
		\caption{Exact $a_1$.}
		\label{figure12}
	\end{minipage}
        \begin{minipage}{0.49\linewidth}
		\centering
		\setlength{\abovecaptionskip}{0.28cm}
		\includegraphics[width=1\linewidth]{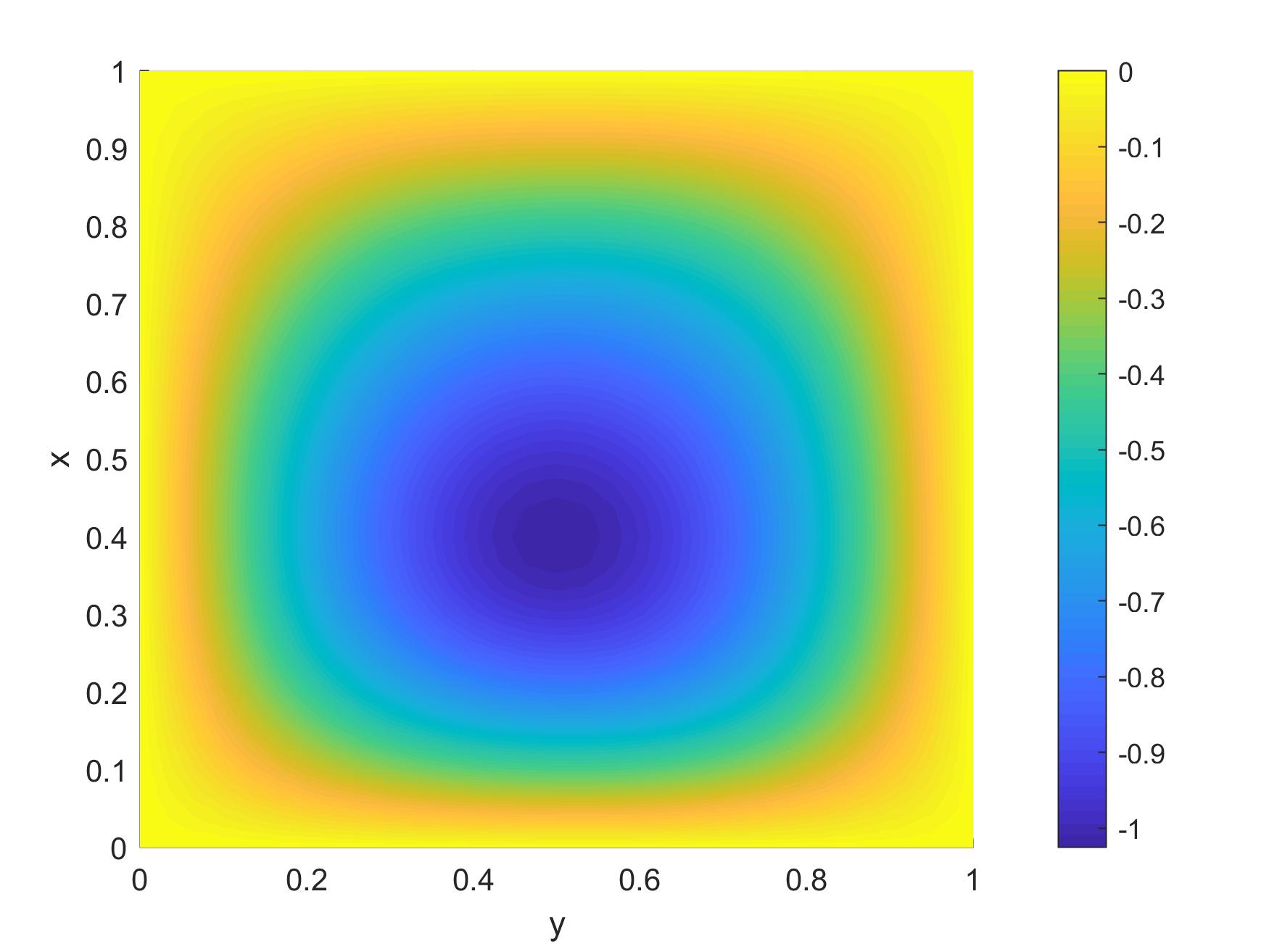}
		\caption{$\rho_{841}=10^{-5}$.}
		\label{figure13}
	  \end{minipage}
    
\end{figure}

\begin{figure}[!htb]
        \begin{minipage}{0.49\linewidth}
		\centering
		\setlength{\abovecaptionskip}{0.28cm}
		\includegraphics[width=1\linewidth]{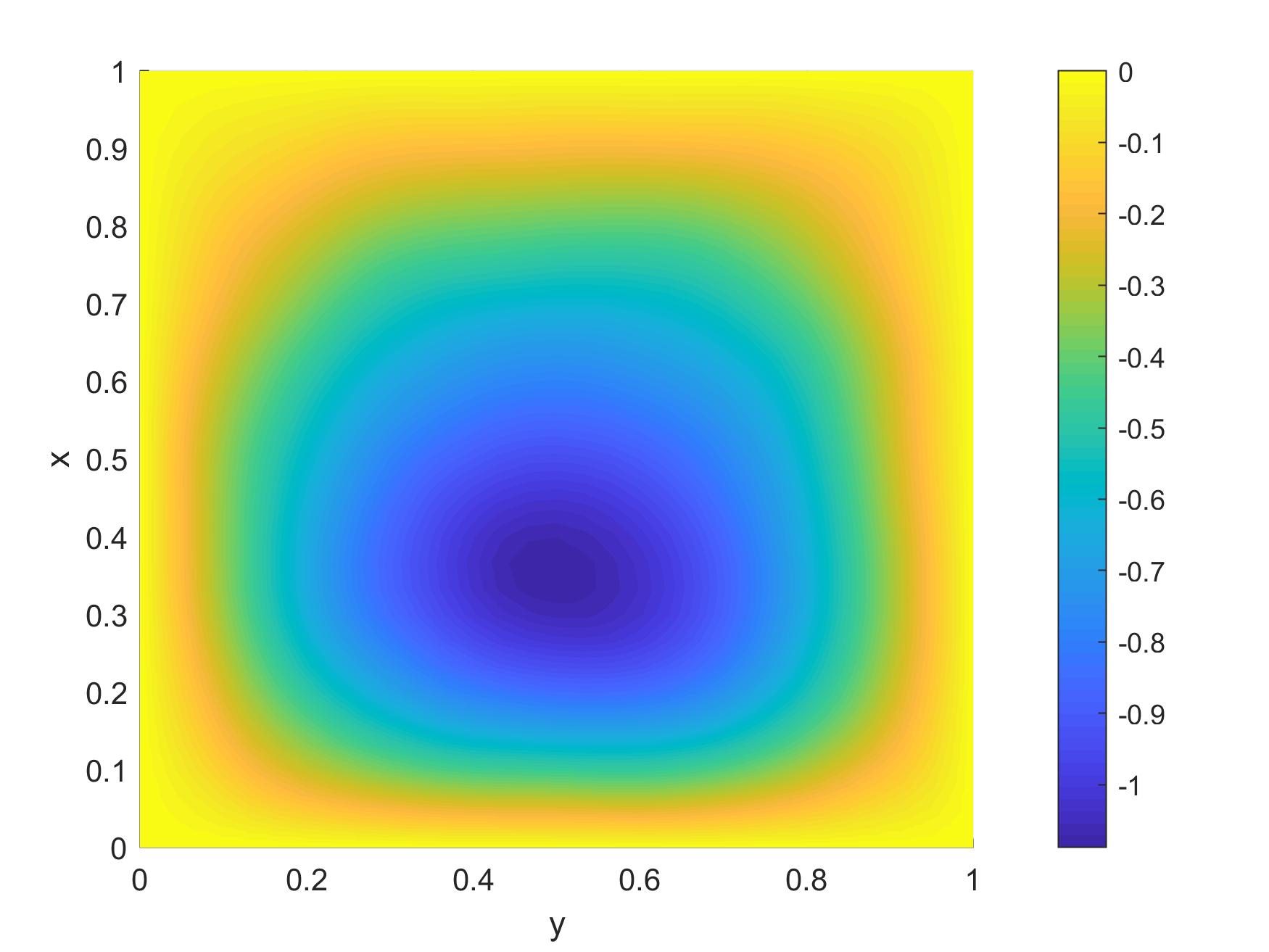}
		\caption{$\rho_{841}=2\times10^{-6}$.}
		\label{figure14}
	\end{minipage}
        \begin{minipage}{0.49\linewidth}
		\centering
		\setlength{\abovecaptionskip}{0.28cm}
		\includegraphics[width=1\linewidth]{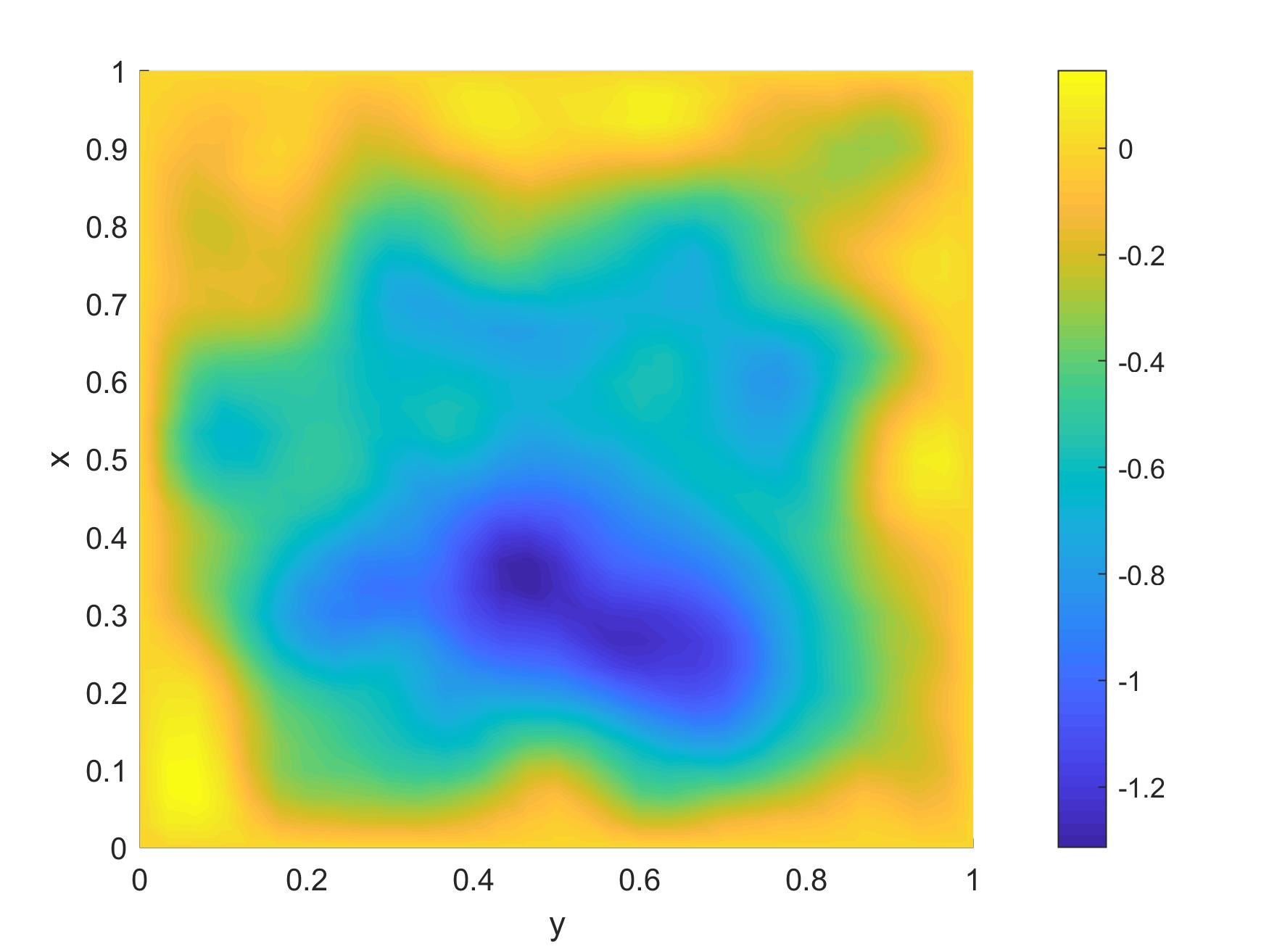}
		\caption{$\rho_{841}=10^{-7}$.}
		\label{figure15}
	  \end{minipage}
    
\end{figure}

\section{Concluding remarks}
We develop a numerical framework for fractional wave equations under the lower regularity assumptions.
The optimal convergence of it is guaranteed by choosing suitable time grids parameter for smooth and nonsmooth solutions. In the numerical framework, a  scattered point measurement-based regularization method is used to solve backward problems with uncertain data.
The optimal error estimates of stochastic convergence not only balance discretization errors, the noise, and the number of observation points, but also propose an a priori choice of regularization parameters. Despite the presence
of large observation errors, we can still obtain more precise inversion results by increasing the number of observation data, even for initial functions with singularity points.

\section*{Declarations}
On behalf of all authors, the corresponding author states that there is no conflict of interest. No datasets were generated or analyzed during the current study.


\end{document}